\def\theequation {\thesection.\arabic{equation}}
\makeatletter\@addtoreset {equation}{section}\makeatother
\def\ardlabel#1{\let\@currentlabel=\theequation\label{#1}}
\newtheorem{theorem}{Theorem}[section]
\newtheorem{lemma}[theorem]{Lemma}
\newtheorem{proposition}[theorem]{Proposition}
\newtheorem{property}[theorem]{Property}
\newtheorem{definition}[theorem]{Definition}
\newtheorem{remark}[theorem]{Remark}
\newenvironment{proof}{
    \noindent {\it Proof.}}{\hfill$\Box$
}
\newcommand{\be}{\begin{equation}}
\newcommand{\ee}{\end{equation}}
\begin{document}

\title{\bf Spectra of chains connected to complete graphs }

\author{J.-G. Caputo$^1$, 
G. Cruz-Pacheco$^2$, A. Knippel$^1$, 
P. Panayotaros $^2$ \\
{ \small $^1$
Laboratoire de Mathematiques,
INSA Rouen Normandie ,} \\
{\small Av. de l'Universit\'e,
76801 Saint-Etienne du Rouvray, France} \\
{\small $^2$
Depto. Matem\'{a}ticas y Mec\'{a}nica,
I.I.M.A.S.-U.N.A.M., } \\
{\small Apdo. Postal 20--726, 01000 M\'{e}xico D.F., M\'{e}xico } }

\date{\today}

\maketitle


\begin{abstract}

We characterize the spectrum of the Laplacian of graphs composed of one
or two finite or infinite chains connected to a complete graph. We show
the existence of localized eigenvectors of two types, eigenvectors
that vanish exactly outside the complete graph and eigenvectors that
decrease exponentially outside the complete graph. Our results also
imply gaps between the eigenvalues corresponding to localized and
extended eigenvectors.

\end{abstract}

\section{Introduction}

We study the spectrum of the Laplacian of finite and infinite graphs
obtained by joining complete graphs and chains. We focus on the  
existence and properties of eigenvectors that are localized 
in the nodes of the complete graph.

Our work is motivated by studies of linear and nonlinear 
mass-spring models of protein vibrations, where the interaction
between the amino-acids of the protein is encoded in a graph \cite{AT95,T96,JSPD07,MP16}. 
It has been suggested that oscillations localized in small subregions 
of certain types of proteins, particularly enzymes,
play a significant biochemical role, see references in \cite{PS11,CPMWF19}. 
Moreover, such oscillations occur in regions of higher density of the protein,  
where amino-acids interact with a larger number of neighboring 
amino-acids \cite{JSPD07,PS11}. The linear modes and frequencies of 
mass-spring models are connected to the eigenvectors and eigenvalues 
of the Laplacian of the graph describing the connectivity of the masses, 
see \cite{MP16}. 
Information on the frequencies and shape of the localized eigenvectors 
can be used to investigate weakly nonlinear modes \cite{MPO14,MP16,MP18}.

The above considerations justify the study of the spectrum of the
Laplacian of graphs that have the geometry of proteins. 
In \cite{MP16,MP18} we showed how localization in 
complete graphs connected to chain graphs suggests a mechanism of 
localization in proteins.
In the present work we consider some of the simplest examples of 
complete graphs connected to chains in order to obtain 
detailed information. We plan to use this information in a 
constructive approach to study more complicated graphs related to proteins.

We study first the Laplacian 
of a complete graph of $p \geq 6$ nodes joined to a chain of $q\geq 3$
nodes and show the existence of $p-2$ eigenvectors
with support in the complete graph component, these are referred 
to as ``clique eigenvectors''.
The clique eigenvectors are degenerate 
and have eigenvalue $p$, see Proposition \ref{internal-states-finite2}.
We also show the existence of 
an eigenvector that decays outside the complete graph
nodes and corresponds to an eigenvalue in the interval $(p,p+2)$, see 
Proposition \ref{edge-states-finite}.
This localized eigenvector, referred to as an ``edge eigenvector'', 
has its maximum amplitude at the node connecting the complete graph
to the chain and decays exponentially (up to a small error) in the chain.
Our analysis also gives the decay rate of the edge eigenvectors
and asymptotics for large $p$.
The remaining eigenvalues are in $[0,4]$. The corresponding eigenvectors, 
referred to as ``chain eigenvectors'', 
have small amplitude in the nodes of the complete graph.  
Similar statements apply to the case where $p \geq 6$, and $q = \infty$, 
see 
Propositions \ref{internal-states-infinite}, and \ref{edge-states-infinite}. 
The results on clique and edge eigenvectors and their eigenvalues 
are similar to the ones obtained for finite chains.     
The spectrum includes the interval $[0,4]$, corresponding to 
oscillatory nondecaying generalized eigenvectors.

We also analyze the spectrum of a complete graph of $p\geq 5 $ nodes 
joined to two chains of $q_1 $, $q_2\geq 3$ nodes 
respectively. We show the existence of $p-3$ clique eigenvectors and 
two edge eigenvectors, see 
Propositions
\ref{clique-evect-twoleg-inf},
\ref{2-leg-edge-states-finite}.
In the case $q_1 = q_2 = q$ we have 
one symmetric and one antisymmetric edge eigenvector.
The remaining eigenvalues are in $[0,4]$ and correspond to chain eigenvectors.
For $q_1 = q_2 = \infty$ we have similar results for the localized states, 
see Propositions \ref{infinite-clique-twoleg},
\ref{edge-states-infinite2}, with
one symmetric and one antisymmetric edge eigenvector.
The spectrum includes the interval $[0,4]$.

The constructive proofs of the edge eigenvectors for both finite and infinite chains
start with an exact computation of the clique eigenvectors. The
eigenvectors normal to the clique eigenvectors are examined by interpreting
the eigenvalue problem at the chain sites as a linear dynamical system,
with additional conditions at the boundary
of the chain. This analysis 
yields an algebraic equation for the edge eigenvalues. 
We obtain bounds for the edge eigenvalues by examining the roots
of the algebraic equations. Note that the proofs 
for the one- and two-chain finite and infinite problems follow the same pattern.
For two chains, the algebraic equations becomes more involved.
The analysis can be simplified by symmetry or
by using the Courant-Weyl estimates.
We note that the Courant-Weyl estimates, Lemmas \ref{weyl-one-leg}, \ref{weyl-two-leg},
give a good approximation of the spectrum for finite graphs. The dynamical 
approach is independent and leads to more precise results.

We also present additional numerical and asymptotic results, for instance
simplified expressions of the decay rate of edge eigenvectors for large $p$. 
We also note the possibility of embedded eigenvalues  
for small values of $p$. The results obtained for one or two chains
connected to a complete graph suggest conjectures
for the spectrum of 
a graph composed of many complete graphs connected by chains.

The article is organized as follows. Definitions and notation are
presented in section 2. Sections 3 and 4 contain respectively the 
analysis for a complete graph connected to one and two chains.
In Section 5 we present conjectures on the spectrum of
a graph composed of many complete graphs connected by chains.

\section{Definitions and Notation}

A finite undirected graph $G = ({\cal V},c)$ is defined by a set of vertices
${\cal V} \in \mathbb{Z} $ 
together with a connectivity function $c:{\cal V} \times {\cal V } \rightarrow \{0,-1\}$ satisfying 
$c_{ij} = -1$ if vertices $i,j$ are connected and $0$ otherwise.
From the connectivity function, one can build the Laplacian matrix of $G$ with
$|V|=n$ as the $n \times n$ matrix $L$ such that
$L_{ij}=c_{ij}=-1$ if $ij$ is an edge and
$L_{ii}= \sum_{j=1}^nL_{ij}$.
We assume the graph to be connected $L_{ii} > 0, \forall i$.
The degree $d_i$ is the number of connections of vertex $i$.

Since $L$ is symmetric and nonnegative, its eigenvalues $\lambda_k, k=1,\dots,n$ are 
real non negative 
and the eigenvectors $v^k$ can be chosen orthonormal. 
We can order the eigenvalues in the following way
$$\lambda_1 \ge \lambda_2 \ge \dots > \lambda_n=0  . $$
Note that only $\lambda_n$ is zero because the graph is connected \cite{crs01}.

We also allow the graph to be infinite, so that $G = ({\cal V},c)$
is a subset ${\cal V} $ of $\mathbb{Z}$ but assume  a finite degree
for each vertex.

We introduce some additional definitions for infinite graphs.
Consider the standard Hermitian inner product 
$ \langle f, g\rangle_c = \sum_{j \in {\cal I}} f_j g^*_j$, $f$, $g$ 
complex-valued functions on ${\cal V}$,
and the corresponding space $l^2_c = l^2_c({\cal V};{\mathbb{C}}) $ of 
$f$ satisfying 
$||f||^2_{l^2} = \langle f, f\rangle_c < \infty$.
We also consider the space $l^\infty_c$
of functions $f$ satisfying $ \sup_{j \in {\cal V}}  |f_j| < \infty$.
The real subspaces of real valued elements of $l^2_c$, $ l^\infty_c$
will be denoted by $l^2$, $l^\infty$.
The restriction of $ \langle \cdot, \cdot \rangle_c$ to elements of
$l^2 \times l^2$ defines an inner product in $l^2$, denoted by
 $ \langle \cdot, \cdot \rangle$.

Given a bounded linear operator $M$ in $l_c^2$, 
the residual set $\rho(M)$ of $M$ is the set of all 
$\lambda \in \mathbb{C}$ for which $M - \lambda I$
has a bounded inverse in $l^2_c$, 
$I$ the identity. The spectrum $\sigma(M)$ is the complement 
of  $\rho(M)$ in $\mathbb{C}$.  
The point spectrum $\sigma_p(M)$ of $M$ is the set 
of all $\lambda \in \mathbb{C}$ 
satisfying $M v = \lambda v$ for some $v$ in $l^2_c$.
Such $v \in l^2_c$ are also denoted as eigenvectors of $M$.
We have $\sigma_p(M) \subset\sigma(M)$.
The Laplacian $L$ of a graph of finite degree
is a bounded operator in $l^2_c$, and is also Hermitian, and nonnegative. 
We therefore have $\sigma(L) \subset [0,+\infty)$. 
  By the reality of $L$, $\sigma(L)$, we may seek eigenvalues of $L$
  in $l^2$.
  
In the case a finite graph,
$\sigma(L) = \sigma_p(L)$, i.e. the set of
eigevalues above. In the case of infinite graphs,
the spectrum of $L$ may be larger than the point spectrum.
We use the notion of the 
essential
spectrum $\sigma_e(M)$ of a bounded operator $M$ in $l^2_c$
defined as in \cite{KP13}, p.29, 
namely $\lambda \in \sigma(M)$
belongs to $\sigma_e(M)$ if $M-\lambda I$ either 
fails to be Fredholm, or is Fredholm with nonzero index,
see e.g. \cite{Kato76}, p.243, 
for a weaker condition, namely $M - \lambda I$
not semi-Fredholm.

In the case of the graphs studied below we see
examples of $u \in l^\infty_c$, $u \notin l^2_c$,
that satisfy $Lu = \lambda u$. Such $\lambda$, and $u$
may be referred to as generalized eigenvalues and eigenvectors of $L$
respectively. We can show that such $\lambda$ belong to
$\sigma_e(L)$, see e.g. \cite{KP13}, ch. 1.
Also, we see examples of
graphs for which $\sigma_e(L) \cap \sigma_p(L) \neq \emptyset$,
the corresponding eigenvalues may be referred to 
embedded eigenvalues.
We may distinguish cases where we have elements in $\sigma_p(L)$
is in the interior or the boundary of $\sigma_e(L)$.

We recall the standard definitions of a chain and a clique, together with
well-known properties.

\begin{definition}
  A chain $C_q$ is a connected graph of $q-1$ vertices
  whose Laplacian $L$ is a tri-diagonal matrix.
\end{definition}

The eigenvalues of the Laplacian of $C_q$ are given by
\begin{equation}
 \label{chain-eval}
\lambda_j =  4 \sin^2 \left [ \frac{\pi (q-1) -j}{2(q-1)} \right ], \quad j = 1, \ldots, q-1.
\end{equation}

\begin{definition}
A clique, $K_n$ is a complete graph with $n$ vertices. Its Laplacian
has $n$ on the diagonal $n$ and all other entries are equal to $-1$.
\end{definition}

The spectrum of a clique $K_n$ is well-known \cite{crs01}, we have
\begin{property}
A clique $K_n$ has eigenvalue $n$ with multiplicity $n-1$ and eigenvectors
$v^k = (1, 0,\dots , 0,-1, 0,\dots , 0,)^T,~~~k=1,\dots,n-1$ and 
eigenvalue $0$ for the constant eigenvector $v^n$.
\end{property}

\subsection{A chain connected to a clique }

We consider graphs formed by the association of clique $K_p$
and a chain $C_q$ denoted by 
$G = K_p \oplus C_q$, with $p \geq 3$, $q \geq 2$ positive integers, 
defined by the set 
\begin{equation}
\label{Kp-Cq-sites}
{\cal V} =  {\cal V}_{p,-} \cup {\cal V}_{q,+}, 
\quad 
{\cal V}_{p,-} =  \{-p + 1, - p + 2, \ldots, 0 \}, 
\quad 
{\cal V}_{q,+}  = \{1, 2, \ldots, q-1 \}, 
\end{equation}
and a connectivity matrix $c$ that satisfies 
\begin{equation}
\label{Kp-Cq-connectivity-1}
c_{ij} = -1, \quad \forall i,j \in {\cal V}_{p,-},  
\end{equation}
\begin{equation}
\label{Kp-Cq-connectivity-2}
c_{01} = c_{10} = -1,
\end{equation}
\begin{equation}
\label{Kp-Cq-connectivity-3}
c_{ij} = -1, \quad \forall i,j \in {\cal V}_{q,+} \quad\hbox{with}\quad |i-j| = 1,   
\end{equation}
and $c_{ij} = 0 $ for all other pairs $(i,j) \in {\cal V} \times {\cal V}$.
Equations \eqref{Kp-Cq-connectivity-1}-\eqref{Kp-Cq-connectivity-3} describe 
a complete graph of $p$ nodes, the set ${\cal V}_{p,-} $, joined to a chain 
of $q $ nodes, the set ${\cal V}_{q,+}$ with nearest-neighbor connectivity 
\eqref{Kp-Cq-connectivity-3}. 
The two sets are joined by  
\eqref{Kp-Cq-connectivity-2}.

\begin{property}
The Laplacian of a graph composed of a complete graph of $p$ nodes joined
joined to a chain of $q $ nodes is
$$L =   L'_{K_p} + L'_{C_{q}},         $$
where $L'_{C_{q}}$ is the augmented Laplacian of the chain $C_{q}$,
$L'_{C_{q}}= \begin{pmatrix} 0          & 0 \\ 0 &  L_{C_{q}}  \\ \end{pmatrix}$
and similarly $L'_{K_p}= \begin{pmatrix} L_{K_p}  & 0 \\ 0& 0       \\ \end{pmatrix}$.
\end{property}
We then write the graph as $ K_p \oplus C_q $.

\subsection{A motivational example }

We consider the graph $ K_6 \oplus C_4$ shown in Fig. \ref{ch3k6}.
\begin{figure}[H]
\centerline{
\epsfig{file=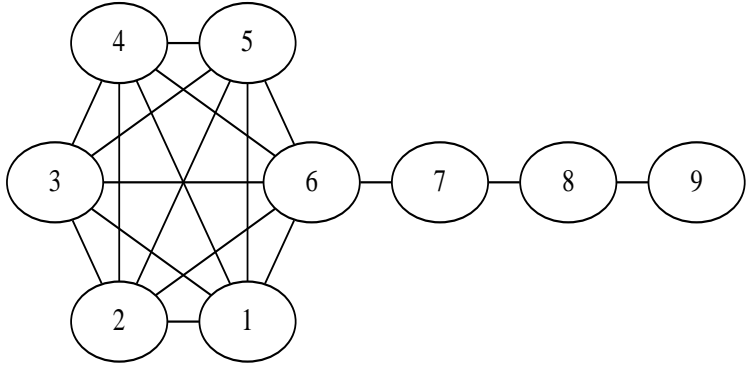,height=5cm,width=10cm,angle=0}
}
\caption{
A chain 3 coupled to the clique K$_6$}
\label{ch3k6}
\end{figure}

The Laplacian is 
\be\label{lapch3k6}
L =   L_{C_4} +L_{K_6} =
\begin{pmatrix}
5  &  -1 &   -1 &   -1 &  -1  &  -1 & . & . & . \\
-1 &  5  &   -1 &  -1  &  -1  &  -1 & . & . & . \\
-1 &  -1 &    5 &    -1&    -1&  -1 & . & . & . \\
-1 &   -1&  -1  & 5    &  -1  &  -1 & . & . & . \\
-1 &  -1 &  -1  &  -1  & 5    &  -1 & . & . & . \\
-1 &  -1 &  -1  &  -1  &  -1  &   6 & -1& . & .  \\
.  &  .  &   .  &  .   &   .  & -1  &  2& -1& . \\
.  & .   &  .   &  .   &  .   &   . & -1&  2& -1  \\
.  & .   &  .   &  .   &  .   &   . & . & -1& 1  \\
\end{pmatrix} ,
\ee
where $L_{C_4}$ is the Laplacian of an $n=9$ vertex graph
such that the last 4 vertices form a chain $q=4$,
$L_{K_6}$ is the Laplacian of an $n=9$ vertex graph
such that the first 6 vertices form a clique $p=6$ and where
the $0$'s are represented by $.$ for clarity.

The eigenvectors and corresponding eigenvalues are
%
%
%
{\small
\begin{table} [H]
\centering
\begin{tabular}{|l|c|c|c|c|c|c|c|c|r|}
   \hline
node & $v^1$  &    $v^2$ &   $v^3$   &  $v^4$  &  $v^5$ &   $v^6$  &  $v^7$    &   $v^8$   &  $v^9$ \\ \hline
1 &   0.1524  &      0   &     0     &     0   &     0  & -0.04879 &  0.098934 &  -0.23129 &  0.33333 \\
2 &   0.1524  & -0.707   &     0     & 0.707   &     0  & -0.04879 &  0.098934 &  -0.23129 &  0.33333 \\
3 &   0.1524  &      0   &   0.707   &-0.707   &     0  & -0.04879 &  0.098934 &  -0.23129 &  0.33333 \\
4 &   0.1524  &  0.707   &     0     &     0   & 0.707  & -0.04879 &  0.098934 &  -0.23129 &  0.33333 \\
5 &   0.1524  &      0   &  -0.707   &     0   &-0.707  & -0.04879 &  0.098934 &  -0.23129 &  0.33333 \\
6 &  -0.9198  &      0   &     0     &     0   &     0  &  0.10652 & -0.051079 &  -0.16999 &  0.33333 \\
7 &  0.19043  &      0   &     0     &     0   &     0  &  0.54399 &  -0.72369 &   0.18156 &  0.33333 \\
8 &-0.039105  &      0   &     0     &     0   &     0  & -0.75017 &  -0.29898 &   0.48499 &  0.33333 \\
9 & 0.0064791 &      0   &     0     &     0   &     0  &  0.34361 &   0.57908 &   0.65988 &  0.33333 \\
  &           &          &           &         &        &          &           &           &       \\ \hline
$\lambda$ & 7.0355 & 6   &     6     &     6   &     6  &   3.1832 &    1.5163 &   0.26503 &     0  \\ \hline
\end{tabular}
\caption{Eigenvectors and eigenvalues for the graph $G= C_4 \oplus K_6$.}
\label{tab1}
\end{table}
}

We see that $p-1=4$ of the $p=5$ eigenvalues of K$_6$ are preserved.
This is easy to see by padding with zeros 4 eigenvectors of 
K$_6$
$$ v^2=(0;0;0;-1;1;0;0;0;0)^T, ~~ v^3=( 0;1;-1;0;0;0;0;0;0)^T, $$
$$ v^4=(0;0;1;0;-1;0;0;0;0)^T ,  ~~ v^5=( 0;-1;0;1;0;0;0;0;0)^T$$
The five other eigenvectors $\{ v^1,v^6, v^7,v^8,v^9 \}$ have the property that
\be \label{vchain} v_k^i=x^i,  ~~k < 4  ,\ee
because these eigenvectors need to be orthogonal
to the eigenvectors of the clique $\{ v^2,v^3,v^4,v^5\}$. They are then 
constant in the clique section of the graph $k < 4$ except at the junction 
vertex.

For the graph studied above, these eigenvectors are plotted in Fig. \ref{rr}.
\begin{figure} [H]
\centerline{ \epsfig{file=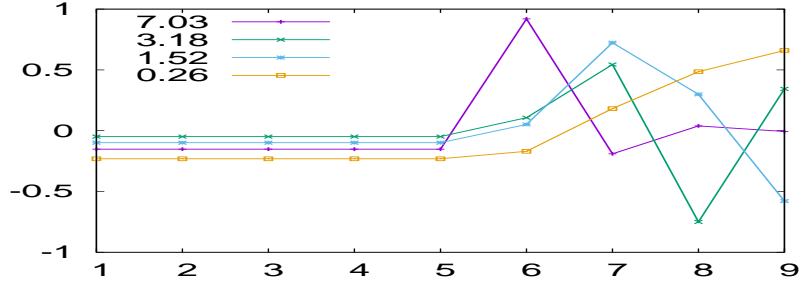,height=4 cm,width=12 cm,angle=0}}
\caption{Plot of the eigenvectors $\{ v^1,v^6, v^7,v^8 \}$ corresponding to 
eigenvalues $7.03,~3.18,~1.52,~0.26$.}
\label{rr}
\end{figure}

In the article, we prove the results illustrated in this
example, specifically
\begin{itemize}
\item the existence of $p-2$ "clique" eigenvalues $p$ for a graph
$G = K_p \oplus C_q$. The corresponding eigenvectors are non zero in the
clique region only.
\item the existence of one "edge" eigenvalue, strictly larger
than $p$, constant in the clique region and decaying
in the chain region. 
\item the existence of two "edge" eigenvalues, strictly larger
than $p$ for a clique $K_p$ connected to two chains $C_{q_1}, C_{q_2}$.
\end{itemize}

\section{A chain connected to a clique }

We study the graph $ K_p \oplus C_q $. 
First, we use the relation
for the Laplacian of $ K_p \oplus C_q $
$$L_{K_p \oplus C_q} = L'_{K_p}+L'_{C_q}$$
and the Courant-Weyl inequalities on the sum of two Hermitian matrices
to obtain bounds on the eigenvalues of $ K_p \oplus C_q $.

\subsection { Courant-Weyl inequalities} 

We have the following Courant-Weyl inequalities, see e.g. \cite{B96}.
\begin{proposition}
\label{courant_weyl}
Let $A$, $B$ symmetric $n \times n$ matrices, then
the Weyl inequalities are  
\begin{equation}
\label{gen-weyl}
\lambda_{k_1}(A) + \lambda_{k_2}(B) \leq \lambda_i(A+B) \leq  \lambda_{j_1}(A) + \lambda_{j_2}(B), 
\end{equation}
for all $i = 1, \ldots, n $, and 
all $k_1$, $k_2$, $j_1$, $j_2 \in \{1, \ldots, n\}$ 
satisfying $k_1 + k_2 = n + i$ and $j_1 + j_2 = i + 1$. 
\end{proposition}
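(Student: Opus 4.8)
The plan is to establish both inequalities from the Courant--Fischer variational characterization of the eigenvalues of a symmetric matrix together with a dimension-counting argument on intersections of subspaces. I keep the decreasing ordering convention $\lambda_1 \geq \lambda_2 \geq \cdots \geq \lambda_n$ used throughout the paper, so I must be careful that this is the convention under which the index relations $k_1+k_2 = n+i$ and $j_1+j_2 = i+1$ are stated. Fix orthonormal eigenbases $\{u_1,\dots,u_n\}$, $\{w_1,\dots,w_n\}$ and $\{v_1,\dots,v_n\}$ of $A$, $B$ and $A+B$ respectively, ordered to match their eigenvalues in decreasing order. The two elementary facts I will use repeatedly are: on $\mathrm{span}\{u_k,\dots,u_n\}$ one has $\langle Ax,x\rangle \leq \lambda_k(A)\,\|x\|^2$, while on $\mathrm{span}\{u_1,\dots,u_k\}$ one has $\langle Ax,x\rangle \geq \lambda_k(A)\,\|x\|^2$, and likewise for $B$ and $A+B$.

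For the upper bound I would introduce the three subspaces $U = \mathrm{span}\{u_{j_1},\dots,u_n\}$, $W = \mathrm{span}\{w_{j_2},\dots,w_n\}$ and $V = \mathrm{span}\{v_1,\dots,v_i\}$, of dimensions $n-j_1+1$, $n-j_2+1$ and $i$. Since all three live in $\R^n$, their common intersection has dimension at least
\begin{equation}
(n-j_1+1)+(n-j_2+1)+i-2n = i-j_1-j_2+2 = i-(i+1)+2 = 1,
\end{equation}
using $j_1+j_2=i+1$. Hence there is a nonzero $x \in U\cap W\cap V$, and for this vector the quadratic-form bounds give $\langle Ax,x\rangle \leq \lambda_{j_1}(A)\|x\|^2$, $\langle Bx,x\rangle \leq \lambda_{j_2}(B)\|x\|^2$, and $\langle (A+B)x,x\rangle \geq \lambda_i(A+B)\|x\|^2$. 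Adding the first two and comparing with the third yields $\lambda_i(A+B)\leq \lambda_{j_1}(A)+\lambda_{j_2}(B)$.

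The lower bound is entirely symmetric: I would take $U=\mathrm{span}\{u_1,\dots,u_{k_1}\}$, $W=\mathrm{span}\{w_1,\dots,w_{k_2}\}$ and $V=\mathrm{span}\{v_i,\dots,v_n\}$, whose dimensions sum to $k_1+k_2+(n-i+1)$; subtracting $2n$ and using $k_1+k_2=n+i$ again leaves intersection dimension at least $1$. For a common nonzero $x$ the form inequalities now point the other way, giving $\lambda_{k_1}(A)+\lambda_{k_2}(B)\leq \lambda_i(A+B)$. (Equivalently, the lower bound follows by applying the upper bound to $-A$ and $-B$ and relabelling indices.) The only real obstacle here is bookkeeping: one must verify that the decreasing ordering flips the max-min characterization correctly so that the spans chosen really carry the claimed one-sided form bounds, and that the two index constraints produce intersection dimension exactly $\geq 1$ in each case. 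These are routine once the conventions are pinned down, which is why the statement is simply attributed to the standard reference \cite{B96}.
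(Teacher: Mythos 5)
Your proof is correct, but note that the paper itself offers no proof of this proposition at all: it states the Weyl inequalities as a known result and simply points to the reference \cite{B96}. What you have written is the standard variational proof — Courant--Fischer quadratic-form bounds on coordinate subspaces of the three eigenbases, plus the elementary estimate $\dim(U\cap W\cap V)\geq \dim U+\dim W+\dim V-2n$ — which is essentially the argument given in such references. All the details check out: with the decreasing ordering, the one-sided form bounds on $\mathrm{span}\{u_k,\dots,u_n\}$ and $\mathrm{span}\{u_1,\dots,u_k\}$ are stated correctly; the index constraints $j_1+j_2=i+1$ and $k_1+k_2=n+i$ each make the dimension count come out to exactly $1$, producing the needed common nonzero vector; and the final additions of the quadratic-form inequalities go in the right directions. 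Your parenthetical remark that the lower bound follows from the upper bound applied to $-A$, $-B$ is also correct, since $\lambda_j(-A)=-\lambda_{n+1-j}(A)$ turns the constraint $j_1'+j_2'=i'+1$ into $k_1+k_2=n+i$ after relabelling. The one step you use without proof is the three-subspace intersection bound, but this follows by iterating $\dim(X\cap Y)\geq \dim X+\dim Y-n$ and is fair to treat as elementary. In short: your proposal supplies a complete, self-contained proof of a statement the paper delegates entirely to the literature.
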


For $A$, $B$ nonnegative this also implies 
\begin{equation}
\label{positive-weyl}
\lambda_i(A) \leq \lambda_i(A+B) \leq  \lambda_{j_1}(A) + \lambda_{j_2}(B) , 
\end{equation}
for all $i = 1, \ldots, n$, and 
all $j_1, j_2 \in \{1, \ldots, n\}$ 
satisfying $j_1 + j_2 = i + 1$ .

\begin{lemma}
\label{weyl-one-leg}
Let $L $ be the graph Laplacian of the graph $ K_p \oplus C_q $,
$p \geq 4$, $q \geq 4$.
Then $\lambda_1(L) \in [p,p+2]$, and $\lambda_j(L) = p$,
$\forall j \in \{2, \ldots, p-1\}$.  
Also $\lambda_p(L) \in (0,\hbox{min} \{ \lambda_1(L_{C_q}) + 2, p \})$
($\subset (0,p)$ if $p \geq 6$),   
and $\lambda_j(L) \in [0,4)$, $\forall j \in \{p+1,\ldots, p+ q - 1 \}$.  
\end{lemma}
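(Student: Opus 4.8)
The plan is to split $L = A + B$ with $A = L'_{K_p}$ and $B = L'_{C_q}$, both symmetric and nonnegative, and feed their spectra into the Weyl inequalities \eqref{positive-weyl}, \eqref{gen-weyl}. By the clique Property, $A$ has eigenvalue $p$ with multiplicity $p-1$ and eigenvalue $0$ with multiplicity $q$, so with $n = p+q-1$ one has $\lambda_1(A)=\dots=\lambda_{p-1}(A)=p$ and $\lambda_p(A)=\dots=\lambda_n(A)=0$. The augmented chain $B$ is a path Laplacian; by \eqref{chain-eval} its nonzero eigenvalues $\beta_1 > \dots > \beta_{q-1}$ all lie in $(0,4)$ and $\lambda_j(B)=0$ for $j \ge q$. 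I would treat the four assertions in the order: chain eigenvalues, clique eigenvalues, then the single top eigenvalue.

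For the chain end of the spectrum I would use the upper Weyl bound with $A$ as the nonnegative summand. Taking the index pair $j_1=p$, $j_2=k+1$ (so $j_1+j_2=(p+k)+1$) gives $\lambda_{p+k}(L)\le \lambda_p(A)+\lambda_{k+1}(B)=\beta_{k+1}<4$ for $k=1,\dots,q-1$, while $L\succeq 0$ gives the lower end; hence $\lambda_j(L)\in[0,4)$ for $j=p+1,\dots,n$, with $\lambda_n=0$ coming from $\beta_q=0$ and connectedness. The same bound with $j_1=p$, $j_2=1$ gives $\lambda_p(L)\le \beta_1<4$, and since $\lambda_1(L_{C_q})\ge 2$ for $q\ge 3$ one has $\beta_1<4\le \lambda_1(L_{C_q})+2$, which yields $\lambda_p(L)\le\min\{\lambda_1(L_{C_q})+2,p\}$; moreover $\lambda_p(L)>0$ because the connected graph has $\lambda_n=0$ as its only zero and $p<n$. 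The restriction $p\ge 6$ is exactly what makes the recorded (cruder) bound $\lambda_1(L_{C_q})+2<6\le p$ force $\lambda_p<p$.

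To pin the clique eigenvalues I would first exhibit, as in the motivational example, the $p-2$ explicit eigenvectors supported on the non-junction clique vertices with entries summing to zero; each is annihilated by $B$ and is a $p$-eigenvector of $A$, so $Lv=pv$, giving $p\in\sigma_p(L)$ with multiplicity at least $p-2$. The lower Weyl bound $\lambda_i(L)\ge\lambda_i(A)=p$ for $i=1,\dots,p-1$ then shows $\lambda_1,\dots,\lambda_{p-1}\ge p$. Since $\lambda_p(L)<p$ by the previous step, all of the $\ge p-2$ eigenvalues equal to $p$ must sit among the $p-1$ indices $1,\dots,p-1$; being at least $p-2$ in number, at most one of these can strictly exceed $p$, and by the descending order that one is $\lambda_1$. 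Hence $\lambda_2=\dots=\lambda_{p-1}=p$ and $\lambda_1\ge p$ (in fact $\ge p+1$, as one sees by instead splitting off the clique-plus-pendant graph, whose top eigenvalue is exactly $p+1$).

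The delicate point, and the one the Weyl inequalities cannot deliver, is the upper bound $\lambda_1(L)\le p+2$: the only available Weyl estimate is $\lambda_1(L)\le\lambda_1(A)+\lambda_1(B)=p+\beta_1<p+4$, which is too weak. To sharpen it I would use that $L$ preserves the subspace $W$ of vectors constant on the $p-1$ non-junction clique vertices; its orthogonal complement is spanned by the clique eigenvectors above, on which $L$ acts as $p\,I$, so $\lambda_1(L)$ equals the top eigenvalue of the compression $\tilde{L}$ of $L$ to $W$. In the natural orthonormal basis $\tilde{L}$ is the $(q+1)\times(q+1)$ Jacobi matrix with diagonal $(1,p,2,\dots,2,1)$ and off-diagonal $(-\sqrt{p-1},-1,\dots,-1)$, and it suffices to prove $(p+2)I-\tilde{L}\succeq 0$. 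I would do this by an $LDL^{\top}$ (Schur-complement) sweep: the super-node pivot is $p+1$, eliminating it leaves the junction pivot $2-(p-1)/(p+1)=(p+3)/(p+1)>0$, and every subsequent chain pivot has the form $p-1/(\text{previous pivot})$ with previous pivot $>p-1$, hence stays positive. All pivots positive gives $\lambda_1(\tilde{L})<p+2$, i.e. $\lambda_1(L)\le p+2$. This reduction to $\tilde{L}$ is the main obstacle and is exactly the object treated with more precision by the dynamical-system argument of Proposition \ref{edge-states-finite}; I expect the positivity of the Schur pivots to be the only step requiring genuine care.
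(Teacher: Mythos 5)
Your proof is correct, but it follows a genuinely different route from the paper's, and the difference sits exactly where you located the ``delicate point.'' The paper does not use the decomposition $L = L'_{K_p} + L'_{C_q}$: it takes $A$ to be the block-diagonal Laplacian of the \emph{disconnected} union of $K_p$ and $C_q$, and $B$ to be the Laplacian of the single bridging edge, so that $\lambda_1(B)=2$ and $\lambda_j(B)=0$ for all $j\ge 2$. With that choice every assertion of the lemma falls out of \eqref{positive-weyl} alone: $\lambda_1(L)\le\lambda_1(A)+\lambda_1(B)=p+2$, and for $2\le j\le p-1$ the index pair $(j_1,j_2)=(j-1,2)$ gives $\lambda_j(L)\le\lambda_{j-1}(A)+\lambda_2(B)=p$, which pins the clique eigenvalues with no need for explicit eigenvectors. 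Your decomposition makes $B$ have rank $q-1$ with $\lambda_1(B)$ near $4$, so --- as you correctly diagnosed --- Weyl can deliver neither the top bound nor the equalities $\lambda_2=\dots=\lambda_{p-1}=p$, and you compensate with (a) explicit clique eigenvectors plus a counting argument, and (b) compression of $L$ to the invariant subspace $W$, reduced to positivity of the Schur pivots of $(p+2)I-\tilde L$. Both compensating arguments are sound (the invariance of $W$, the identification of $W^\perp$ with the span of the clique eigenvectors on which $L=pI$, and the form of the Jacobi matrix $\tilde L$ all check out), and they buy you strictly more than the lemma claims: the strict bounds $\lambda_1(L)<p+2$ and $\lambda_p(L)<4$; moreover your quotient matrix $\tilde L$ is, in essence, the same reduction that the paper later exploits through the transfer-matrix analysis of Proposition \ref{edge-states-finite}. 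Two small repairs to your pivot sweep: the predecessor of the first chain pivot is $(p+3)/(p+1)$, which is $>1$ but not $>p-1$, so the induction hypothesis should be ``previous pivot $>1$'' (from which ``chain pivots $>p-1$'' then follows); and the final pivot has the form $(p+1)-1/(\text{previous pivot})$, not $p-1/(\text{previous pivot})$, since the last diagonal entry of $\tilde L$ is $1$, not $2$. Neither affects the conclusion.
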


\begin{proof}
  We will apply the Weyl inequalities \eqref{positive-weyl}
  using the decomposition $L = A + B$, 
where $A$ is the $(p+ q + 1) \times (p+ q + 1)$ block diagonal matrix with 
blocks $L_{K_p}$ and $L_{C_q}$. The only non-vanishing elements of 
$B$ are $B(p,p) = B(p+1, p+1) = 1$, $B(p+1,p) = B(p,p+1) = -1$. 

We then have $\lambda_j(A) = p$ if $j \in \{1, \ldots, p-1 \}$, and 
$\lambda_{p-1+k}(A) = \lambda_k(L_{C_q})  \in (0,4)$ for $k \in \{1, q-2\}$. Also  
$\lambda_{p + q - 2}(A) = \lambda_{p + q - 1}(A)  = 0$.
In addition, $\lambda_{1}(B) = 2$, and $\lambda_j(B) = 0 $,
$\forall j \in \{ 2, \ldots, p + q - 1 \}$.

We use $\lambda_j(A) \leq \lambda_j(A + B)$, $j = 1, \ldots, p + q - 1$, 
see \eqref{positive-weyl},
as lower bounds.

For the upper bounds we first note that    
$$ \lambda_1(A + B) \leq \lambda_1(A ) + \lambda_1(B) = p+2.$$
For $j \in {2, \ldots, p-1\ }$ we have  
$$ \lambda_j(A + B) \leq \hbox{min} \{\lambda_j(A) +
\lambda_1(B), \ldots, \lambda_1(A) \} = p, $$
since $\lambda_j(B) = 0 $, $\forall j \in \{ 2, \ldots, p + q - 1 \}$. 
Similarly 
$$ \lambda_p(A + B) \leq \hbox{min}
\{\lambda_p(A) + \lambda_1(B), \ldots, \lambda_1(A) \} 
= \hbox{min} \{ \lambda_1(L_{C_q}) + 2, p \}. $$
In the case $p \geq 6$ we also have $ \lambda_1(L_{C_q}) + 2 < 6 \leq p$, 
therefore $ \lambda_p(A + B) < p$.
 
For $j \in \{p+1,\ldots, p+ q - 1 \}$, i.e. 
$j = p-1+k$, $k \in \{2, \ldots, q-2\}$,  
we have
\begin{eqnarray}
\nonumber
\lambda_{p-1+k}(A + B)
& \leq  &
\hbox{min}\{\lambda_k(L_{C_q}) + \lambda_1(B), \lambda_{k-1}(L_{C_q}), 
\ldots, \lambda_{1}(A)\} \\
\nonumber
& = &
\hbox{min}\{\lambda_k(L_{C_q}) + 2, \lambda_{k-1}(L_{C_q}) \} < 4
\end{eqnarray}
using $ \lambda_{k-1}(L_{C_q}) < 4 < p$. 
Also 
$ \lambda_{p+q-2}(A + B) \leq \min\{2, \lambda_{{\tilde q}-1}(L_{C_q})\} < 4 $,   
and $ \lambda_{p+q-1}(A + B) = 0$.
The statement follows by combining the above with the lower bounds. 
\end{proof}

\subsection { Clique eigenvectors}

We show the existence of eigenvectors that 
are strictly zero outside the clique component $K_p$.

\begin{proposition}
\label{internal-states-finite2}
Let $L$ be the Laplacian of the graph $ K_p \oplus C_q $, $p \geq 3$, $q \geq 1$.
Then there exists a subspace $E_K$ of dimension $p-2$ such that
$v \in E_K$ satisfies $L v = p v$, and $v_j = 0$, $\forall j \notin \{-p + 1, \ldots, - 1 \}$.
\end{proposition}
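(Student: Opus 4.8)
The plan is to construct the subspace $E_K$ explicitly from the clique eigenvectors and verify directly that its elements are eigenvectors of the full Laplacian $L$ of $K_p \oplus C_q$. Recall from the Property stated earlier that the clique $K_p$ has eigenvalue $p$ with multiplicity $p-1$, and eigenvectors of the form $v^k = (1,0,\dots,0,-1,0,\dots,0)^T$, i.e. differences of two standard basis vectors supported on the clique nodes ${\cal V}_{p,-} = \{-p+1,\dots,0\}$. The key structural observation is that only the junction vertex $0$ of the clique acquires an extra connection (to chain node $1$) when we form $K_p \oplus C_q$. So if I take a vector $v$ supported entirely on the clique nodes, then $Lv$ restricted to the chain nodes depends only on $v_0$, via the single coupling edge $c_{01}$.

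First I would define $E_K$ as the span of those vectors $w = (w_{-p+1},\dots,w_0,0,\dots,0)^T$ supported on ${\cal V}_{p,-}$ that are eigenvectors of the clique block with eigenvalue $p$ and that additionally satisfy $w_0 = 0$, i.e. vanish at the junction vertex. The constraint $\sum_{j} w_j = 0$ (orthogonality to the constant clique eigenvector, so that $L_{K_p} w = p w$) together with the single linear constraint $w_0 = 0$ cuts the $(p-1)$-dimensional eigenspace of $L_{K_p}$ down to dimension $p-2$; concretely, one may take the basis $\{e_{-p+1} - e_j : j = -p+2,\dots,-1\}$, the $p-2$ difference vectors among the non-junction clique nodes, which are manifestly orthogonal to the constant vector and vanish at node $0$.

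Next I would verify the eigenvalue equation $Lw = pw$ for $w \in E_K$ by checking it coordinate by coordinate on the three regions. On the interior clique nodes, the full Laplacian acts exactly as $L_{K_p}$ (the coupling term $B$ from the decomposition $L = L'_{K_p} + L'_{C_q}$ is zero there), so $(Lw)_j = p w_j$ follows from the clique eigenvector property. At the junction vertex $0$, the row of $L$ differs from the clique row only by the added degree $+1$ and the entry $c_{01} = -1$; since $w_0 = 0$ and $w_1 = 0$, the extra terms contribute $1\cdot w_0 - w_1 = 0$, so $(Lw)_0 = (L_{K_p} w)_0 = p w_0 = 0 = pw_0$. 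On the chain nodes $j \geq 1$, every term of $(Lw)_j$ involves a chain or junction coordinate of $w$, all of which vanish except possibly $w_0$; but node $1$ couples to $w_0 = 0$ and all higher chain nodes couple only among themselves where $w$ vanishes, so $(Lw)_j = 0 = p w_j$. This confirms $w$ is an eigenvector with eigenvalue $p$.

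I do not expect a serious obstacle here; the result is essentially a bookkeeping argument and the only subtle point is dimension counting. The apparent tension is that $L_{K_p}$ has a $(p-1)$-dimensional $p$-eigenspace, yet only a $(p-2)$-dimensional subspace survives as genuine eigenvectors of the coupled graph, because imposing $w_0 = 0$ removes one dimension. The reason the junction-vanishing condition is exactly the right one is that it is precisely what decouples the clique block from the chain: any $p$-eigenvector of the clique with $w_0 \neq 0$ would inject a nonzero source $-w_0$ at chain node $1$, so $Lw$ could not vanish on the chain. Thus the main verification is showing these $p-2$ vectors are linearly independent and satisfy $w_0=0$, which is immediate for the chosen basis, and confirming that the dimension cannot be larger, which follows because requiring $w$ to be clique-supported, a $p$-eigenvector of $L_{K_p}$, and to vanish at node $0$ imposes exactly these two independent linear constraints on the $(p-1)$-dimensional eigenspace.
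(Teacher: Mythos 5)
Your proof is correct and follows essentially the same route as the paper: both construct an explicit basis of difference vectors supported on the non-junction clique nodes (your $e_{-p+1}-e_j$ versus the paper's $e_{-1}-e_{-k}$, spanning the same $(p-2)$-dimensional space) and verify $Lw = pw$ coordinate by coordinate on the clique interior, the junction vertex, and the chain. Your framing of $E_K$ as the clique $p$-eigenspace cut by the junction-vanishing condition $w_0=0$ also matches the paper's own remark that such eigenvectors can simply be padded with zeros.
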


\begin{proof}
Let $k \in \{2, \ldots, p-1 \}$ and define the vectors $v^k \in l^2$ by 
\begin{equation}
\label{intern-evec-one-leg}
	v^k_{-1} = 1, \quad v^k_{-k} = -1, \quad v^k_{j} = 0, \quad \forall j \in {\cal V}\setminus\{-1, -k\}. 
\end{equation}
We check that the $v^k$ are linearly independent.  
The statement follows by checking that 
$L v^k =  p v^k$, $\forall k \in \{2, \ldots, p-1 \}$.
Consider first $i \geq 1$. Then 
\begin{equation}
\label{case-in-out}
(L v^k)_{i} = \sum_{j \in \{-1, -k\}} L_{i,j} v^k_{j} = 0 =  p v^k_{i} 
\end{equation}
since $L_{ij} = c_{ij} = 0$ if $i \geq 1$, $j \leq - 1$ by 
\eqref{Kp-Cq-connectivity-1}-\eqref{Kp-Cq-connectivity-3}.
In the case $ i \in \{-(p-1), \ldots, 0\}\setminus\{-1,-k\}$ we have  
\begin{eqnarray}
\nonumber
(L v^k)_{i} &  = & \sum_{j \in \{-1, -k\}} L_{ij} v^k_{j} \\
\label{case-in-in}
& = & L_{i,-1} - L_{i,-k} 
	= c_{i,-1} - c_{i,-k} = 0 = p v^k_{i},
\end{eqnarray}
by \eqref{intern-evec-one-leg}, \eqref{Kp-Cq-connectivity-1}-\eqref{Kp-Cq-connectivity-3}.
Also, 
\begin{eqnarray}
\nonumber
(L v^k)_{-1} & = & \sum_{j \in \{-1, -k\}} L_{-1,j} v^k_{j} \\
\label{case-on-site-1}
& = &  L_{-1,-1} - L_{-1,-k} 
	=  (p-1) +1 = p =  p v^k_{-1}, 
\end{eqnarray}
and 
\begin{eqnarray}
\nonumber
(L v^k)_{-k} & = & \sum_{j \in \{-1, -k\}} L_{-k,j} v^k_{j} \\
\label{case-on-site-2}
& =  & L_{-k,-1} - L_{-k,-k} 
	= -1 + (-(p-1)) = -p =  p v^k_{-k}. 
\end{eqnarray}

\end{proof}

An alternative proof follows from noticing that the eigenvectors of $K_p$
that vanish at the sites connecting $K_p$ to the rest of the graph 
can be padded with zeros to form eigenvectors of $K_p \oplus C_q$,
see \cite{CKK19} and \cite{merris}.

The result extends to the case $q  = \infty$:
\begin{proposition}
\label{internal-states-infinite}
Let $L$ be the Laplacian of the graph $ K_p \oplus C_\infty $, $p \geq 3$. 
Then there exists a subspace $E_K \in l^2$ of dimension $p-2$ such that 
$v \in E_K$ satisfies $L v = p v$, and $v_j = 0$, $\forall j 
\notin \{-p + 1, \ldots, - 1 \}$.
\end{proposition}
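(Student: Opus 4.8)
The plan is to reuse verbatim the explicit eigenvectors constructed in the proof of Proposition \ref{internal-states-finite2}, exploiting the fact that they are completely insensitive to the length of the chain. For each $k \in \{2, \ldots, p-1\}$ I would define $v^k$ by the same formula \eqref{intern-evec-one-leg}, namely $v^k_{-1} = 1$, $v^k_{-k} = -1$, and $v^k_j = 0$ for all other $j \in {\cal V}$. Since each $v^k$ is supported on the two sites $\{-1, -k\}$, it has finite support and therefore lies in $l^2$; the same finite-support observation shows that the $p-2$ vectors are linearly independent and hence span a subspace $E_K$ of dimension $p-2$.

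First I would verify that $L v^k = p v^k$ holds as an identity in $l^2_c$. The crucial structural fact is that the support sites $-1$ and $-k$ lie strictly inside the clique block $\{-p+1, \ldots, -1\}$ and, in particular, avoid the junction site $0$. Consequently the action of $L$ at any vertex involves only the clique connectivity \eqref{Kp-Cq-connectivity-1}, which is identical to the finite case; the infinite chain enters solely through sites $j \geq 1$, and these are disconnected from $\{-1, -k\}$ by the nearest-neighbor structure \eqref{Kp-Cq-connectivity-3}. The four cases of the finite-case verification---the chain sites $i \geq 1$, the interior clique sites $i \in \{-(p-1), \ldots, 0\}\setminus\{-1,-k\}$, and the two support sites $i = -1$ and $i = -k$---therefore carry over unchanged, since the coordinate-wise computations \eqref{case-in-out}--\eqref{case-on-site-2} are purely local and in no way reference the value of $q$.

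I do not anticipate a genuine obstacle here, as the eigenvectors are finitely supported and the eigenvalue equation is entirely local. The only point requiring a moment's care is the passage from the finite-matrix setting to the bounded-operator setting: one must confirm both that $v^k \in l^2_c$ (immediate from finite support) and that $L v^k$, computed as the bounded operator $L$ acts coordinate-wise, still equals $p v^k$ in the $l^2$ sense rather than merely formally. Both facts follow at once from the locality of $L$ together with the vanishing of $v^k$ at the junction, so the extension to $q = \infty$ is essentially automatic.
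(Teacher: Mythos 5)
Your proposal is correct and is essentially the paper's own argument: the paper proves this proposition by noting that the finite-case clique eigenvectors of Proposition \ref{internal-states-finite2}, padded with zeros along the infinite chain, remain eigenvectors, which is exactly your reuse of \eqref{intern-evec-one-leg} together with the locality/finite-support observations. Your write-up is just a more detailed version of the same one-line extension.
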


The result follows by noticing that the
clique eigenvectors of finite chains
$ K_p \oplus C_q $
can be extended to be eigenvectors of 
$ K_p \oplus C_\infty $ by padding the sites beyond $q$ with zeros.

The next result is the existence of an ``edge eigenvector'', 
that is constant on the nodes of $K_p$ and decays exponentially in the
chain component. 

Let $E$ be a real subspace of $l^2$, then $E^{\perp}$ denotes 
its orthogonal complement with respect to $\langle \cdot, \cdot \rangle$.

\subsection{Edge eigenvector for a clique connected to an infinite chain}

We first introduce the transfer matrix formalism used
to simplify the problem in graphs that contain chains.

\subsubsection { Transfer matrix for a chain}

The equation $L v = \lambda v$ for the infinite chain ${\cal V} = \mathbb{Z}$ with
$c_{ij} = -1$ if $|i-j| = 1$, $c_{ij} = 0$ otherwise, is  
\begin{equation}
\label{eq-chain-form-inf}
-v_{j-1} + 2 v_j - v_{j+1} = \lambda v_j, \quad \forall j \in \mathbb{Z}.     
\end{equation}
%
%

Define $M_\lambda$, $\lambda \in {\mathbb{R}}$, and $z_j$ by 
\begin{equation}
\label{def-transf-matrix}
M_{\lambda} = \left[ 
\begin{array}{cc}
0 & 1 \\ 
-1 & -\lambda + 2 
\end{array}
\right],   ~~ z_j = \begin{pmatrix} v_j \\ v_{j+1} \end{pmatrix}.
\end{equation}

Then \eqref{eq-chain-form-inf}
is equivalent to
\begin{equation}
\label{transf-eq}
z_{j+1} = M_{\lambda} z_j, \quad \forall j \in \mathbb{Z}. 
\end{equation}
The eigenvalues $\sigma$ of $M_{\lambda}$ satisfy
$$\sigma^2 -(-\lambda+2)\sigma +1=0, $$
they are
\begin{equation}
\label{eval-transf-matrix}
\sigma_\pm = \frac{1}{2}[(-\lambda + 2) \pm \sqrt{(-\lambda + 2)^2 - 4}]. 
\end{equation}
The corresponding eigenvectors $v^\pm$ are
\begin{equation}
\label{evect-transf-matrix}
v^\pm = \begin{pmatrix} 1 \\ \sigma_{\pm} \end{pmatrix}.
\end{equation}
We have $\sigma_+ \sigma_- = 1$.
The discriminant of the equation in $\sigma$ is $\Delta = (2-\lambda)^2 -4$ and 
for $\lambda \in (0,4)$ we have $\Delta >0$ and elliptic dynamics for
\eqref{eq-chain-form-inf}.

We will be especially interested in the case 
$\lambda > 4$, where the dynamics is hyperbolic and $\sigma_+$ satisfies
the inequality $-1 < \sigma_+ < 0$.\\

\begin{remark}
  \label{large-lambda-sigma}
Also, that for $\lambda \gg 4$ we have
$$ \sigma_+ =
-{1 \over \lambda -2} + O\left( {1 \over (\lambda -2)^3 }\right),~~~~
\sigma_- = 2-\lambda + O\left( {1 \over (\lambda -2)^2 }\right) .$$
\end{remark}

\begin{proposition}
\label{edge-states-infinite}
Let $L$ be the Laplacian of the graph
$ K_p \oplus C_\infty $, $p \geq 5$, and let $E_K$ be as in Proposition
\ref{internal-states-infinite}.
Then $\lambda > 4 $
is an eigenvalue of $L$ with corresponding eigenvector $v \in l^2 \cap E_K^{\perp}$ 
if and only if 
$F(\lambda) = 0$, where
\begin{equation}
\label{one-leg-edge-eigenval-eq}
F(\lambda) =  (-\lambda + 1)\sigma_+  - (-\lambda + p)(-\lambda + 1) +  (p-1), 
\end{equation}
and $\sigma_+ = \sigma_+(\lambda)$ is as in \eqref{eval-transf-matrix}.
Furthermore, $F(\lambda) = 0 $ has exactly one solution in $(p,p+2)$,
and no solutions in $(4,p]\cup [p+2,+\infty)$.
\end{proposition}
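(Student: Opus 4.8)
The plan is to identify the structure of $E_K^\perp$, reduce the eigenvalue equation on the chain to the transfer-matrix dynamics \eqref{eq-chain-form-inf}, and thereby obtain the scalar equation $F(\lambda)=0$; the root count then becomes a one-variable calculus problem after a convenient change of variable.

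First I would determine what membership in $E_K^\perp$ forces. The vectors $v^k$ of Proposition \ref{internal-states-infinite} span exactly the zero-sum vectors supported on the pure clique nodes $\{-p+1,\dots,-1\}$, so $v\in E_K^\perp$ must be constant there, say $v_{-1}=\cdots=v_{-(p-1)}=a$, while $v_0$ and the chain values $v_1,v_2,\dots$ are free. Writing $Lv=\lambda v$ at a pure clique node (diagonal $p-1$) simplifies to $v_0=(1-\lambda)a$, and at the junction node $0$ (diagonal $p$) gives $v_1=(p-\lambda)v_0-(p-1)a$. At the chain sites the equation is exactly \eqref{eq-chain-form-inf}, so $z_j=(v_j,v_{j+1})^T$ obeys $z_{j+1}=M_\lambda z_j$ with $z_0=(v_0,v_1)^T$.

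Next I would impose $l^2$-decay. For $\lambda>4$ the matrix $M_\lambda$ is hyperbolic with $\sigma_+\in(-1,0)$ and $\sigma_-=\sigma_+^{-1}<-1$, so the solution lies in $l^2$ if and only if $z_0$ is a $\sigma_+$-eigenvector, i.e.\ $v_1=\sigma_+v_0$, in which case $v_j=v_0\sigma_+^j$ decays geometrically. Substituting the two clique relations into $v_1=\sigma_+v_0$ and cancelling $a$ yields precisely \eqref{one-leg-edge-eigenval-eq}, $F(\lambda)=0$. For the converse note that $a=0$ forces $v_0=v_1=0$ and hence $v\equiv 0$ by the recurrence, so any nontrivial eigenvector has $a\neq 0$ and the cancellation is legitimate; this establishes the equivalence.

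Finally, for the root count I would change variables to $s=\sigma_+\in(-1,0)$ via $\lambda=2-s-s^{-1}$, which is a strictly increasing bijection of $(-1,0)$ onto $(4,\infty)$ since $\frac{d\lambda}{ds}=-1+s^{-2}>0$ there. Multiplying $F$ by $s^2>0$ rationalizes it into the cubic $G(s)=-(p-2)s^3+2(p-2)s^2-(p-3)s-1$, which has the same sign as $F$. The key observation is that $G'(s)=-3(p-2)s^2+4(p-2)s-(p-3)$ has all three terms negative on $(-1,0)$ once $p\geq 5$, so $G$ is strictly decreasing; since $G(-1)=4p-10>0$ and $G(0)=-1<0$, $G$ has exactly one zero in $(-1,0)$, hence $F$ has exactly one zero in $(4,\infty)$. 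To locate it I evaluate $F(p)=(p-1)(1-\sigma_+(p))>0$ and $F(p+2)=-(p+1)\sigma_+(p+2)-(p+3)<0$, which together with the monotone sign change of $F$ place the unique root in $(p,p+2)$ and give $F>0$ on $(4,p]$ and $F<0$ on $[p+2,\infty)$. I expect the main obstacle to be exactly this counting step: $F$ depends on $\lambda$ both polynomially and through the square root in $\sigma_+(\lambda)$ from \eqref{eval-transf-matrix}, so differentiating in $\lambda$ directly is unwieldy, and the reparametrization by $s=\sigma_+$ is the device that reduces monotonicity to the elementary sign of $G'$.
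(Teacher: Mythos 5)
Your proposal is correct. The first half of your argument --- forcing $v$ to be constant on the pure clique nodes via orthogonality to $E_K$, deriving $v_0=(1-\lambda)a$ and $v_1=(p-\lambda)v_0-(p-1)a$, and imposing the stable-direction condition $v_1=\sigma_+v_0$ for $l^2$ decay to arrive at $F(\lambda)=0$ --- is essentially identical to the paper's, including the remark that $a=0$ forces $v\equiv 0$. Where you genuinely diverge is the root count, and your route is different in a useful way. The paper proceeds in three separate steps: it gets at least one root in $(p,p+2)$ from the same endpoint signs $F(p)>0>F(p+2)$; it proves uniqueness \emph{within} $(p,p+2)$ by showing $F'(\lambda)<0$ there, working in the variable $x=-\lambda+2$ and establishing the derivative bound ${\tilde\sigma}_+'(x)<-1/2$; and it excludes roots in $(4,p]$ and in $[p+2,+\infty)$ by two separate contradiction arguments against $\sigma_+\in(-1,0)$. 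Your substitution $s=\sigma_+$, $\lambda=2-s-s^{-1}$, which is indeed a strictly increasing bijection of $(-1,0)$ onto $(4,+\infty)$, replaces all three steps at once: it rationalizes $F$ into the cubic $G(s)=s^2F=-(p-2)s^3+2(p-2)s^2-(p-3)s-1$ (the algebra checks out), whose derivative is term-by-term negative on $(-1,0)$ when $p\geq 5$, so $G(-1)=4p-10>0>G(0)=-1$ yields exactly one root of $F$ in the entire half-line $(4,+\infty)$; the endpoint evaluations at $p$ and $p+2$ then localize that unique root in $(p,p+2)$, which settles all three claims simultaneously. Your approach is more elementary --- no differentiation through the square root in $\sigma_+(\lambda)$ --- and gives a cleaner global uniqueness statement. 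What the paper's approach buys in exchange is reusability: the bound ${\tilde\sigma}_+'(x)<-1/2$ is cited again verbatim in the two-chain Proposition \ref{edge-states-infinite2} to handle both $F_S$ and $F_A$, whereas your cubic computation would need to be redone for each new eigenvalue equation (though it would work there as well, since $F_A$ and $F_S$ also rationalize to low-degree polynomials in $s$).
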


\begin{proof}
   We construct $v \in  l^2 $ that satisfies
  $L v = \lambda v$, $\lambda > 4$.
  First, let $v$ be orthogonal to the span of the $p-2$ eigenvectors of
  Proposition \ref{internal-states-infinite} (i), then  
$$ - v_{-p+1} + v_{-1} = 0, \ldots, - v_{-2} + v_{-1} = 0,  $$
  therefore
\begin{equation}
   \label{def-c1-one-leg-inf}
v_{-p+1} = \ldots = v_{-1} = C_0   
\end{equation}
for some real $C_0$. Equation  
$L v = \lambda v$ at the nodes $k \in {\cal I}=\{ -p+1, \ldots, -1\}$ is 
\begin{equation*}
-\sum_{j \in {\cal I}\setminus\{k\}} v_{j} + d_{k} v_{k}  = \lambda v_{k},
  \end{equation*} 
or
\begin{equation*}
-(p-2) C_0 - v_{0} + (p-1) v_{k}  = \lambda v_{k}.
  \end{equation*} 
Using \eqref{def-c1-one-leg-inf}
we therefore have 
\begin{equation}
   \label{def-c2-one-leg-inf}
   v_{0} =  C_1 = (-\lambda + 1) C_0, 
\end{equation}
i.e. the same relation, for all $k \in \{ -p+1, \ldots, -1\}$. 

The condition 
$L v = \lambda v$ at the node $k = 0$ is 
  \begin{equation*}
-\sum_{j = - p+1}^{-1} c_{0,j} v_{j} + d_{0} v_{0} - v_{1} = \lambda v_{0},
\end{equation*}
  and reduces to
  \begin{equation}  
   \label{def-v1-one-leg-inf}
   v_{1} = (-\lambda + p)C_1 - (p-1)C_0 = [(-\lambda + p)(-\lambda + 1) - (p-1)]C_0.  
  \end{equation}
Furthermore, 
$L v = \lambda v$ at the nodes $j \geq 1$ is
  \begin{equation}  
\label{def-vgr1-one-leg-inf}
-v_{j-1} + 2 v_{j} - v_{j+1} = \lambda v_{j}, \quad \forall j \geq 1.   
 \end{equation}  
  We will show that \eqref{def-vgr1-one-leg-inf} implies a second condition on $v_{1}$,
  leading to 
an equation for $\lambda$. 
Letting
$$ z_j = \begin{pmatrix} v_j \\ v_{j+1} \end{pmatrix} ,~~j \geq 0, $$  
\eqref{def-vgr1-one-leg-inf} is equivalent to
\begin{equation}
\label{transf-eq-one-leg-inf}
z_{j+1} = M_{\lambda} z_j, \quad \forall j \geq 0,   
\end{equation}
with $M_\lambda$ as in \eqref{def-transf-matrix}.  
Therefore $z_0 = a v^+ + b v^- $, $a$, $b$ real, implies
that \eqref{def-vgr1-one-leg-inf} is equivalent to
\begin{equation}
\label{sol-transf-eq-one-leg-inf}
z_{n} = a \sigma_+^n v^+ + b \sigma_-^n v^-, \quad \forall n \geq 0,    
\end{equation}
with $\sigma_\pm$, $v^{\pm} $ as in \eqref{eval-transf-matrix},
\eqref{evect-transf-matrix} respectively.
The assumption $\lambda  >4 $ and \eqref{eval-transf-matrix} imply 
$|\sigma_-| > 1$. Therefore $v \in  l^2$ requires $b = 0$ in 
\eqref{sol-transf-eq-one-leg-inf}, in particular
we must require 
$$z_0 = \begin{pmatrix} v_0 \\ v_{1} \end{pmatrix} = a \begin{pmatrix} 1 \\ \sigma_+ \end{pmatrix}$$ for some real $a$, or equivalently 
\begin{equation}
\label{stable-dir-cond-one-leg-inf-1}  
  v_{1} = \sigma_+ v_{0} = \sigma_+ C_1. 
\end{equation}
By \eqref{def-c2-one-leg-inf},
we therefore have
\begin{equation}
\label{stable-dir-cond-one-leg-inf-2}  
  v_{1} = \sigma_+ (-\lambda + 1) C_0. 
\end{equation}
We may assume that $C_0 \neq 0$, otherwise $v$ vanishes at all nodes.
Then, 
comparing \eqref{def-v1-one-leg-inf}, \eqref{stable-dir-cond-one-leg-inf-2}
we have
\begin{equation}
\label{edge-eval-cond-one-leg-inf}
\sigma_+(-\lambda + 1) = (-\lambda + p)(-\lambda + 1) - (p-1), 
\end{equation}
which by $\sigma_+ = \sigma_+(\lambda)$ of \eqref{eval-transf-matrix}
is an equation for $\lambda$.

We first check that there is at least one solution $\lambda \in (p,p+2)$.
We let
\begin{equation}
\label{F-one-leg-inf}
F(\lambda) =  \sigma_+ (-\lambda + 1) - (-\lambda + p)(-\lambda + 1) +  (p-1). 
\end{equation}
$F$ is clearly continuous, moreover
$$ F(p) =  (p-1)(1 - \sigma_+) > 0, $$
since $ \sigma_+ \in (-1,0)$. Also
$$ F(p + 2) = - p (1 + \sigma_+) - (3 + \sigma_+) < 0,$$
by $ \sigma_+ \in (-1,0)$.

To see that there is only one root of $F$ in $(p,p+2)$ 
we check that $ F'(\lambda) < 0$ for all $\lambda \in (p,p+2)$.
Let $x = - \lambda + 2$, and examine
${\tilde F}(x) = F(\lambda(x))$ for $x \in (-p, -p+2)$.
Also let ${\tilde \sigma}_+(x) = \sigma_+(\lambda(x))$. 
By \eqref{F-one-leg-inf}
\begin{equation}
  \label{deriv-F-one-leg-inf}
  {\tilde F}'(x) = [{\tilde \sigma}(x) - x + 2 - p] + (x-1)({\tilde \sigma}'(x) - 1).
  \end{equation}
We claim that ${\tilde \sigma}'(x) < - 1/2$, $\forall x \in (-p, -p+2)$.
This follows from 
$$ {\tilde \sigma}'(x)  = \frac{1}{2} + h(x), \quad h(x) = \frac{x}{(x^2 - 4)^{1/2}}, $$
and $h(-p) < 0$, $h^2(-p)> 1 $, $h'(x) = - 4(x^2 - 4)^{-3/2} < 0$, $\forall x \in (-p, -p+2)$.
Then $ h(x) < -1$, $\forall x \in (-p, -p+2)$, and the claim follows.
Then \eqref{deriv-F-one-leg-inf}, 
$- x + 2 - p > 0 $, $x - 1 < - p +1$, and ${\tilde \sigma}(x) \in (-1,0)$
lead to  
$$ {\tilde F}'(x) > -1 + \frac{3}{2}(p-1) > 0, $$
$\forall x \in (-p, -p+2)$, by $p \geq 2$. It follows that  $ F'(\lambda) = -  {\tilde F}'(x) < 0$,
$ \forall \lambda \in (p,p+2)$.

To check that all solutions of \eqref{edge-eval-cond-one-leg-inf},
$\lambda > 4$, 
are in $(p,p+2)$,
consider first $\lambda \leq p $. Then 
$- (\lambda + 1) \leq p-1$ and by 
\eqref{edge-eval-cond-one-leg-inf} we have  
$$ \sigma_+ = p - \lambda + \frac{p-1}{\lambda - 1}
\geq 1, $$
contradicting $\sigma_+ \in (-1,0)$.
Assume now $\lambda \geq  p + 2 $, then
by \eqref{edge-eval-cond-one-leg-inf} we have 
$$ \sigma_+ \leq - 2  - \frac{p-1}{\lambda - 1} \leq - 1,$$
contradicting $\sigma_+ \in (-1,0)$.

\end{proof}

We have the following theorem for the essential spectrum 
of the graph Laplacian of $ K_p \oplus C_\infty $.
\begin{proposition} \label{especinf}
Let $L$ be the Laplacian of $ K_p \oplus C_\infty $ 
Then $ \sigma_e({L}) = [0,4]$.
\end{proposition}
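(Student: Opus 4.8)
The plan is to realize $L$ as a finite-rank perturbation of the Laplacian of a purely chain-like graph whose essential spectrum is already known, and then invoke the stability of the essential spectrum under such perturbations. Concretely, I would let $L_0$ be the graph Laplacian of the single semi-infinite chain on the same vertex set ${\cal V} = \{-p+1, -p+2, \ldots\}$, i.e. the graph in which $i,j$ are joined precisely when $|i-j|=1$. This $L_0$ is a bounded, Hermitian operator on $l^2_c$: a Jacobi matrix with constant off-diagonal entries $-1$ and diagonal $2$, except for the value $1$ at the endpoint $-p+1$. Its essential spectrum is the standard interval $[0,4]$. Indeed, the bi-infinite free Laplacian on $\mathbb{Z}$ is unitarily equivalent, via the Fourier transform, to multiplication by $2-2\cos\theta$ on $L^2(\mathbb{T})$, whose spectrum is $[0,4]$ and is purely essential; passing to the half-line modifies $L_0$ only by a rank-one endpoint correction, which leaves $\sigma_e$ unchanged.

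The key observation is that $L - L_0$ is finite rank. The two operators have identical rows and columns at every node $j \geq 1$: there both describe the nearest-neighbor chain, and in particular the coupling edge $(0,1)$, the entry $L_{0,-1} = -1$, and the diagonal entry $L_{11} = 2$ coincide. The operators differ only inside the finite block indexed by the clique nodes $\{-p+1, \ldots, 0\}$, where $L$ carries the additional clique edges together with the correspondingly larger diagonal degrees. Hence $L - L_0$ is supported on a $p \times p$ block and has rank at most $p$.

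A finite-rank operator is compact, so I would appeal to the invariance of the essential spectrum under compact perturbations. With the paper's Fredholm-based definition of $\sigma_e$, one uses that the property ``$M-\lambda I$ is Fredholm of a given index'' is preserved when $M$ is perturbed by a compact operator. Since $L$ and $L_0$ are Hermitian, whenever $L-\lambda I$ or $L_0-\lambda I$ is Fredholm it has index zero, so the paper's definition reduces to the ordinary essential spectrum, and Weyl's theorem yields $\sigma_e(L) = \sigma_e(L_0) = [0,4]$.

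The only points requiring care are the reduction of the index-based definition to the ordinary essential spectrum in the Hermitian case, and the identification $\sigma_e(L_0) = [0,4]$. The latter is classical Jacobi-matrix theory and may be cited, or obtained directly by exhibiting, for each $\lambda = 2-2\cos\theta \in [0,4]$, a singular Weyl sequence of normalized truncated plane waves $v^{(n)}_j \sim e^{i\theta j}$ supported increasingly far out in the chain, which shows $[0,4] \subseteq \sigma_e(L_0)$, together with the resolvent bound outside $[0,4]$ for the reverse inclusion. I do not anticipate a genuine obstacle here; the substantive step is simply recognizing the finite-rank comparison between $L$ and the free chain, after which Weyl's theorem closes the argument.
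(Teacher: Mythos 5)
Your proposal is correct and follows essentially the same route as the paper: the paper also realizes $L$ as a finite-rank perturbation of the Laplacian of the semi-infinite nearest-neighbor chain, whose essential spectrum is $[0,4]$, and invokes invariance of the essential spectrum under finite-rank perturbations (citing Kato). You actually supply details the paper leaves implicit --- the explicit $p\times p$ support of $L-L_0$, the index-zero reduction for Hermitian operators, and the Weyl-sequence identification of $\sigma_e(L_0)$ --- where the paper instead sketches a supplementary transfer-matrix argument ($\lambda\in(0,4)$ gives bounded oscillatory generalized eigenvectors, $\lambda>4$ gives hyperbolic dynamics).
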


\begin{proof}
Recall that the essential spectrum is invariant under finite rank 
perturbations, see e.g. \cite{Kato76}. The Laplacian of $ K_p \oplus C_\infty $, $p$ finite, 
is a finite rank perturbation of the Laplacian of the graph 
corresponding to $\mathbb{Z}^+$ with nearest neighbor connections.
The essential spectrum of this graph $[0,4]$. By the dynamics of the transfer matrix,
for $\lambda >4$ we have hyperbolic dynamics, 
therefore $ \lambda \rho(L)$. For $\lambda \in (0,4)$ we have oscillatory dynamics, and
generalized eigenvalues in $ v \in l^\infty$, $v \notin l^2$. Thus $(0,4) \in \sigma_e(L)$,
moreover $[0,4] \in \sigma_e(L)$ since $ \rho(L)$ is open.
\end{proof}

Finally, the whole spectrum of $ K_p \oplus C_\infty $ is given by
the following theorem.
\begin{proposition}
\label{spectrum-infinite}
Let $L$ be the Laplacian of the graph $ K_p \oplus C_\infty $, $p \geq 5$.
Then the spectrum of $L $ is a union of the disjoint sets $[0,4]$ (the essential
spectrum of $L$), and $\{\lambda, p \}$, with $ \lambda \in (p,p+2)$
(the point spectrum of $L$).
\end{proposition}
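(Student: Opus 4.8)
The plan is to assemble the three preceding propositions and then invoke the general structure theory of bounded self-adjoint operators to show that nothing else can enter the spectrum. By Proposition \ref{especinf} we already know $\sigma_e(L) = [0,4]$, so $[0,4] \subseteq \sigma(L)$. By Proposition \ref{internal-states-infinite} the value $p$ is an eigenvalue (with the $(p-2)$-dimensional clique eigenspace $E_K$), and by Proposition \ref{edge-states-infinite} there is exactly one further eigenvalue $\lambda \in (p,p+2)$ with eigenvector in $l^2 \cap E_K^{\perp}$. Since $p \geq 5$, both $p$ and $\lambda$ lie in $(4,\infty)$ and are therefore disjoint from $[0,4]$. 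It remains to establish the two completeness statements: that $\sigma(L)\cap(4,\infty)$ contains nothing beyond $\{p,\lambda\}$, and that $L$ has no eigenvalues embedded in $[0,4]$.

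For the first point I would use that $L$ is bounded, self-adjoint and nonnegative, so $\sigma(L)\subseteq[0,\infty)$ and, by Weyl's theorem, every point of $\sigma(L)\setminus\sigma_e(L)=\sigma(L)\setminus[0,4]$ is an isolated eigenvalue of finite multiplicity, in particular lies in $\sigma_p(L)$. Take any such $\mu>4$ with eigenvector $v\in l^2$. Since $E_K$ is an eigenspace it is $L$-invariant, and by self-adjointness so is $E_K^{\perp}$; writing $v=v_K+v_\perp$ with $v_K\in E_K$, $v_\perp\in E_K^{\perp}$ gives $L v_K=\mu v_K$ and $L v_\perp=\mu v_\perp$ separately. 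If $v_K\neq 0$ then $\mu=p$, because $L$ acts as $p\,\mathrm{Id}$ on $E_K$; if $v_\perp\neq 0$ then $\mu$ is an eigenvalue with eigenvector in $l^2\cap E_K^{\perp}$, so $F(\mu)=0$ by Proposition \ref{edge-states-infinite}, and the root analysis there (a single root in $(p,p+2)$, none in $(4,p]\cup[p+2,\infty)$) forces $\mu=\lambda$. Hence $\sigma(L)\cap(4,\infty)=\{p,\lambda\}$.

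For the absence of embedded eigenvalues I would argue directly from the transfer-matrix description of Section~3. If $\mu\in(0,4)$ were an eigenvalue with $v\in l^2$, then on the chain the recursion \eqref{def-vgr1-one-leg-inf} forces $z_n=a\sigma_+^n v^+ + b\sigma_-^n v^-$; but for $\mu\in(0,4)$ the discriminant is negative and $|\sigma_\pm|=1$, so the only square-summable choice is $a=b=0$, whence $v$ vanishes identically on the chain. Propagating this vanishing back through node $0$ and imposing $Lv=\mu v$ on the clique nodes then forces either $v\equiv 0$ or $\mu=p$, both impossible for $\mu\in(0,4)$. The endpoints $\mu=0$ and $\mu=4$ are excluded similarly, since there $M_\mu$ has eigenvalues of modulus one (a constant mode at $0$, a double eigenvalue $-1$ at $4$) and the resulting chain solutions again fail to be in $l^2$. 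Combining this with the previous paragraph gives $\sigma_p(L)=\{p,\lambda\}$ and the claimed disjoint decomposition $\sigma(L)=[0,4]\sqcup\{p,\lambda\}$.

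I expect the main obstacle to be the embedded-eigenvalue step. The inclusions above $4$ are essentially bookkeeping layered on top of Propositions \ref{especinf}--\ref{edge-states-infinite} together with the $E_K\oplus E_K^{\perp}$ splitting, whereas ruling out point spectrum \emph{inside} the essential spectrum $[0,4]$ is not covered by any earlier statement and requires the separate $l^2$-decay argument, with a careful treatment of the elliptic case $\mu\in(0,4)$ and the degenerate boundary cases $\mu\in\{0,4\}$ of the transfer matrix.
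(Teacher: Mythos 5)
Your proposal is correct, and it is worth noting that the paper itself supplies no written proof of this proposition: it is meant to follow by assembling Propositions \ref{internal-states-infinite}, \ref{edge-states-infinite} and \ref{especinf}, which is exactly the skeleton of your first two paragraphs (Weyl's theorem for the discrete spectrum, the $L$-invariant splitting $l^2 = E_K \oplus E_K^{\perp}$, and the root count for $F$ to pin down $\sigma(L)\cap(4,\infty)=\{p,\lambda\}$). Where you genuinely go beyond the paper is the third step: ruling out eigenvalues embedded in $[0,4]$. The paper never writes this argument down, even though the claim ``$\{\lambda,p\}$ is the point spectrum'' requires it; the authors are clearly aware of the issue (their Remark after the proposition notes that for $p=3,4$ the clique eigenvalue \emph{is} embedded), but for $p\geq 5$ they implicitly rely on the fact that the known eigenvalues exceed $4$, which by itself does not exclude additional eigenvalues inside $[0,4]$. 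Your elliptic transfer-matrix argument closes this gap cleanly: for $\mu\in(0,4)$ the two roots $\sigma_{\pm}$ are distinct and of modulus one, so square-summability forces $v_0=v_1=0$ and hence $v\equiv 0$ on the chain, after which the equations at the junction and clique nodes give $pv_k=\mu v_k$, impossible for $\mu<p$ unless $v\equiv 0$; the endpoints $\mu\in\{0,4\}$ are handled by the Jordan-block solutions $a+bn$ and $(-1)^n(a+bn)$ (at $\mu=0$ the general chain solution is affine, not merely constant, but this does not affect the conclusion). So your write-up is not just a proof of the stated proposition but a completion of it relative to what the paper actually establishes.
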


\begin{remark}
  We note that by
  Proposition \ref{internal-states-infinite}
  for $p = 3$ we have a clique eigenvector with eigenvalue
  $\lambda = 3 \in [0,4]$, i.e. an embedded eigenvalue. 
  For $p = 4$ we similarly have three clique eigenvectors at the boundary
  of the essential spectrum. In both cases there is 
  numerical evidence for
  an edge eigenvector outside $[0,4]$. 
\end{remark}

\subsection{Asymptotic estimates for large $p$}  

For large $p$, it is possible to use $F$ and the relations
established above to obtain asymptotics for $\lambda$ , $\sigma_+ $ and $C_0$.
From $F(\lambda)=0$, we get
\be\label{as1}
\sigma_+ - (-\lambda+p) +{ p-1 \over 1-\lambda}=0 . \ee
We can express $\sigma_+ $ as
$$\sigma_+ = {1 \over 2 } \left[  2-\lambda + |2-\lambda| \sqrt{1-{4 \over (2-\lambda)^2 }} \right ]$$
Let us assume $p \gg 1$. Since $\lambda \ge p$, we can expand $\sigma_+ $ in powers of
$\lambda$ and obtain
\be\label{as2}
\sigma_+ = -{1 \over \lambda -2} + O \left ( {1 \over (\lambda -2)^2} \right )
. \ee
Inserting  \eqref{as2} into \eqref{as1}, we get
$$\lambda = p + { 1-p \over 1-\lambda} + { 1 \over \lambda-2} $$
Solving step by step this expression, we obtain the final estimates
\begin{eqnarray}
\label{as3}	\lambda = p+1 + O\left ({1 \over p}\right ), \\  
\label{as4}	C_0= {1 \over 1-\lambda} = -{1 \over p} , \\
\label{as5}	\sigma_+ = -{1 \over \lambda -2} \approx {1 \over p-1} .
\end{eqnarray}
These expressions are reported in Table \ref{tab2} together with the
numerical solution for the graph $G= C_4 \oplus K_6$. As can be seen
the agreement is very good.
\begin{table} [H]
\centering
\begin{tabular}{|l|c|c|r|}
   \hline
                       & $\lambda$  & $\sigma_+ $ & $C_0$ \\ \hline
 Numerical solution    & 7.03    & -0.205    & -0.166 \\ \hline
 Theory                & 7.02    & -0.2    & -0.167  \\ \hline
\end{tabular}
\caption{\small\em Edge eigenvector: $\lambda$ , $\sigma_+ $ and $C_0$ for
the theory and the graph $G= C_4 \oplus K_6$. }
\label{tab2}
\end{table}

\subsection{Edge eigenvector for $ K_p \oplus C_q $ }



\begin{proposition}
\label{edge-states-finite}
Let $L$ be the Laplacian of the graph $ K_p \oplus C_q $, $p \geq 6$, $q \geq 3$,
and let $E_K$ be as in Proposition \ref{internal-states-finite2}.
Then $\lambda > 4 $
is an eigenvalue of $L$ with corresponding eigenvector $v \in E_K^{\perp}$ 
if and only if 
$F_q(\lambda) = 0$, where
\begin{equation}
\label{one-leg-edge-eigenval-eq-finite}
F_q(\lambda) =  (-\lambda + 1)\sigma_+  
\frac{1 + \sigma_+^{2 q - 3}}{1 + \sigma_+^{2 q - 1}} - (-\lambda + p)(-\lambda + 1) +  (p-1), 
\end{equation}
and $\sigma_+ = \sigma_+(\lambda)$ is as in \eqref{eval-transf-matrix}.
Furthermore, $F_q(\lambda) = 0 $ has exactly one solution in $(p,p+2]$,
and no solutions in 
$(4,p] \cup (p+2, +\infty)$. 
\end{proposition}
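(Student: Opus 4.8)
The plan is to follow the same constructive scheme as the proof of Proposition \ref{edge-states-infinite}, replacing the decay-at-infinity condition by the free-end boundary condition at the last chain node. As there, I would take $v \in E_K^{\perp}$ with $Lv = \lambda v$, $\lambda > 4$. Orthogonality to the $p-2$ clique eigenvectors of Proposition \ref{internal-states-finite2} forces $v_{-p+1} = \cdots = v_{-1} = C_0$, and the equations $Lv=\lambda v$ at the clique nodes and at the junction node $0$ give, exactly as in the infinite case, $v_0 = C_1 = (-\lambda+1)C_0$ and $v_1 = [(-\lambda+p)(-\lambda+1) - (p-1)]C_0$. On the interior chain nodes $1 \le j \le q-2$ the equation is again the recurrence \eqref{eq-chain-form-inf}, so with $z_j = (v_j, v_{j+1})^T$ we have $z_{j+1} = M_\lambda z_j$ and hence $z_n = a\sigma_+^n v^+ + b\sigma_-^n v^-$ for real $a,b$, with $M_\lambda$, $\sigma_\pm$, $v^\pm$ as in \eqref{def-transf-matrix}, \eqref{eval-transf-matrix}, \eqref{evect-transf-matrix}.

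The key new step is the treatment of the free end. The node $q-1$ has degree $1$, so $Lv=\lambda v$ there reads $-v_{q-2} + v_{q-1} = \lambda v_{q-1}$, which is precisely the chain recurrence supplemented by the ghost relation $v_q = v_{q-1}$. Imposing $v_q = v_{q-1}$ on $z_n$ and using $\sigma_+ \sigma_- = 1$, I would obtain $b/a = \sigma_+^{2q-1}$; substituting into $v_1/v_0 = (a\sigma_+ + b\sigma_-)/(a+b)$ then gives the chain-side ratio
\[
\frac{v_1}{v_0} = \sigma_+ \frac{1 + \sigma_+^{2q-3}}{1 + \sigma_+^{2q-1}} =: \sigma_+ g_q(\lambda).
\]
The clique-side relations give the ratio $v_1/v_0 = (p-\lambda) + (p-1)/(\lambda-1) =: R(\lambda)$; equating the two ratios (and assuming $C_0 \neq 0$, else $v \equiv 0$) yields $\sigma_+ g_q(\lambda) = R(\lambda)$, which is $F_q(\lambda)=0$ after clearing $(-\lambda+1)\neq 0$. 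This proves the equivalence. Note that for $\lambda > 4$ we have $\sigma_+ \in (-1,0)$, so $1 + \sigma_+^{2q-1} \in (0,1)$ and $F_q$ is continuous with no spurious poles.

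For the root count I would work with the reformulation $\sigma_+ g_q(\lambda) = R(\lambda)$. A short computation gives $g_q - 1 = \sigma_+^{2q-3}(1-\sigma_+^2)/(1+\sigma_+^{2q-1}) < 0$, so $g_q \in (0,1)$ and thus $\sigma_+ g_q \in (-1,0)$ for $\lambda > 4$. Since $R$ is strictly decreasing with $R(p) = 1$ and $R(p+2) = -2 + (p-1)/(p+1) \in (-2,-1)$, the left side lies strictly below $R$ at $\lambda=p$ and strictly above $R$ at $\lambda=p+2$, giving a solution in $(p,p+2)$ by the intermediate value theorem. Moreover $R \ge 1 > \sigma_+ g_q$ on $(4,p]$ and $R < -1 < \sigma_+ g_q$ on $[p+2,\infty)$, excluding any root there; in particular the unique root lies in the open interval, consistent with $(p,p+2]$.

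The main obstacle is uniqueness in $(p,p+2)$, which I would obtain by showing $\sigma_+ g_q$ is strictly increasing in $\lambda$. Because $\sigma_+(\lambda)$ is increasing on $(4,\infty)$ (as already established in the analysis of \eqref{eval-transf-matrix}), this reduces to showing $\phi(s) = s(1+s^{m})/(1+s^{m+2})$, $m = 2q-3 \ge 3$, is increasing in $s \in (-1,0)$. Writing $t = -s \in (0,1)$, the numerator of $\phi'$ becomes $N(t) = 1 - (m+1)t^m + (m+1)t^{m+2} - t^{2m+2}$, and I would prove $N > 0$ on $(0,1)$ by noting $N(1) = 0$ and $N'(t) = (m+1)t^{m-1} B(t)$ with $B(t) = -m + (m+2)t^2 - 2t^{m+2}$; since $B'(t) = 2(m+2)t(1-t^m) > 0$ and $B(1)=0$, one has $B<0$ on $(0,1)$, so $N$ is strictly decreasing from $N(0)=1$ to $0$, hence positive. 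Then $\sigma_+ g_q$ is strictly increasing while $R$ is strictly decreasing, so the crossing in $(p,p+2)$ is unique. I expect the algebra of this last monotonicity step, rather than the construction of $F_q$, to be the delicate part, since the $q$-dependent factor $g_q$ spoils the simpler derivative estimate available in the infinite-chain case.
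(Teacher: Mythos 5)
Your proposal is correct, and while your derivation of $F_q$ coincides with the paper's (your ghost-node relation $v_q=v_{q-1}$ is just a repackaging of the compatibility condition the paper imposes at the free end, and it yields the same ratio $b/a=\sigma_+^{2q-1}$ and the same matching equation), your root count takes a genuinely different route. After excluding $(4,p]$ exactly as you do, the paper invokes the Courant--Weyl bounds of Lemma \ref{weyl-one-leg} to confine all roots to $(p,p+2]$ and to obtain existence and uniqueness by an eigenvalue-counting argument; note that this imports a lemma stated for $q\geq 4$, whereas the proposition allows $q=3$. You instead stay entirely analytic: you exclude $[p+2,\infty)$ by the elementary bound $R(\lambda)\le R(p+2)<-1<\sigma_+ g_q$, get existence from the intermediate value theorem, and get uniqueness from strict monotonicity, with $R$ strictly decreasing and $\sigma_+ g_q$ strictly increasing; the latter rests on your lemma that $\phi(s)=s(1+s^{m})/(1+s^{m+2})$ increases on $(-1,0)$, whose proof via $N(t)$ and $B(t)$ I checked and is valid for every odd $m\ge 1$. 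What your route buys: independence from the Weyl machinery (so the argument works verbatim for all $q\ge 2$, covering the stated hypothesis $q\ge 3$ without strain), the sharper conclusion that the root lies in the open interval $(p,p+2)$, and a finite-$q$ analogue of the derivative argument the paper itself only carries out in the infinite-chain case (Proposition \ref{edge-states-infinite}); the cost is the extra algebraic lemma. Two small points to make explicit in a final write-up: the monotonicity of $\lambda\mapsto\sigma_+(\lambda)$ on all of $(4,\infty)$ follows from the paper's own computation $h(x)=x(x^2-4)^{-1/2}<-1$ for $x=2-\lambda<-2$ (the paper states it only for $x\in(-p,-p+2)$), and the converse direction of the equivalence (a root of $F_q$ produces an eigenvector in $E_K^{\perp}$) should be noted to follow because every step of the construction is reversible once one fixes $C_0\neq 0$.
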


\begin{proof}
To construct $v $ that satisfies
  $L v = \lambda v$, $\lambda > 4$ and is 
orthogonal to the span of the $p-2$ eigenvectors of
Proposition \ref{internal-states-finite2} we  
argue as in the proof of Proposition \ref{edge-states-infinite}.
First, we must have 
\begin{equation}
   \label{def-c1-one-leg-finite}
v_{-p+1} = \ldots = v_{-1} = C_0   
\end{equation}
for some real $C_0$.  Arguing as in the proof of Proposition \ref{edge-states-infinite},   
$L v = \lambda v$ at the nodes $k = -p+1, \ldots, -1$, 
and $ v_{0} = C_1$ leads to the condition 
  \begin{equation}
   \label{def-c2-one-leg-finite}
   v_{0} =  C_1 = (-\lambda + 1) C_0, 
\end{equation}
and $L v = \lambda v$ at the node $k = 0$ 
reduces to
  \begin{equation}  
   \label{def-v1-one-leg-finite}
   v_{1} = [(-\lambda + p)(-\lambda + 1) - (p-1)]C_0.  
  \end{equation}

Furthermore, 
$L v = \lambda v$ at the nodes $ 1 \leq j \leq q- 2 $ is
\begin{equation}  
\label{def-vgr1-one-leg-finite}
v_{j-1} - 2 v_{j} + v_{j+1} = -\lambda v_{j},
\quad \forall j \in \{1, \ldots, q-2 \}.   
\end{equation}  
Letting $$ z_j =  \begin{pmatrix} v_j \\ v_{j+1} \end{pmatrix},~~  0 \leq j  \leq q - 2,$$  
\eqref{def-vgr1-one-leg-finite} is equivalent to
\begin{equation}
\label{transf-eq-one-leg-finite}
z_{j+1} = M_{\lambda} z_j, \quad \forall j \in \{1, \ldots, q-2 \},   
\end{equation}
with $M_\lambda$ as in \eqref{def-transf-matrix}.  
Therefore $z_0 = a v^+ + b v^- $, $a$, $b$ real, implies
\begin{equation}
\label{sol-transf-eq-one-leg-finite}
z_{n} = a \sigma_+^nv^+ + b \sigma_-^nv^-, \quad \forall n \in \{ 0, \ldots, q-1\}.   
\end{equation}
Evaluating at $n = q - 2$ using \eqref{evect-transf-matrix} we have  
\begin{equation}
\label{last-value-one-leg-finite-1}
\left[ 
\begin{array}{c}
v_{q-2} \\ 
v_{q-1} 
\end{array}
\right] = 
\left[
\begin{array}{c}
a \sigma_+^{q-2}  + b \sigma_-^{q-2}  \\ 
 a \sigma_+^{q - 1} + b \sigma_-^{q - 1} 
\end{array}
\right]. 
\end{equation} 
On the other hand, $L v = \lambda v$ at the node $q - 1$ is
\begin{equation}
\label{last-value-one-leg-finite-2}
v_{q - 1} = \frac{ v_{q-2}}{-\lambda + 1}. 
\end{equation} 
Compatibility of \eqref{last-value-one-leg-finite-1}, \eqref{last-value-one-leg-finite-2}
requires 
\begin{equation}
\label{last-value-one-compatibility}
\frac{1}{-\lambda + 1} [ a \sigma_+^{q-2}  + b \sigma_-^{q-2}] =  a \sigma_+^{q-1} + b \sigma_-^{q-1}.
\end{equation} 
We may assume that one of $a$, $b$ does not vanish, otherwise by 
\eqref{def-c1-one-leg-finite}, 
\eqref{def-c2-one-leg-finite},
\eqref{sol-transf-eq-one-leg-finite} we have the trivial vector.
Assuming $a \neq 0$, 
\eqref{last-value-one-compatibility} is equivalent to 
\begin{equation}
\label{ratio-ab-1}
\frac{b}{a} = \frac{\sigma_+^{q -2}}{\sigma_-^{q - 2}}
\left( \frac{(-\lambda + 1)\sigma_+ - 1}{1-(-\lambda + 1)\sigma_-} \right). 
\end{equation} 
$\sigma_\pm$ are eigenvalues of 
\eqref{def-transf-matrix} and therefore satisfy 
$\sigma^2 - (-\lambda +2) \sigma + 1 = 0 $. 
Using $-(-\lambda + 1) \sigma_{\pm} = - \sigma_{\pm}^2 + \sigma_{\pm} - 1 $
and $\sigma_+ \sigma_- = 1$ we 
simplify \eqref{ratio-ab-1} to 
\begin{equation}
\label{second-ratio-ab}
\frac{b}{a} = \frac{\sigma_+^{q - 1}}{\sigma_-^{q - 1}}
\left( \frac{\sigma_+ - 1 }{1 - \sigma_-} \right) = \sigma_+^{2 q - 1}. 
\end{equation} 
By $\lambda > 4$ we have $\sigma_+ \in (-1,0)$,
thus $ b \neq 0 $. Assuming $ b \neq 0$ we arrive 
at $a/b = \sigma_-^{2 q} \neq 0$, in a similar way.
Thus $a$, $b$ not both vanishing implies 
\eqref{second-ratio-ab} and $a$, $b \neq 0$. 

We now compare expressions  
\eqref{def-c2-one-leg-finite}, \eqref{def-v1-one-leg-finite}
for $v_0$, $v_1$, and $z_0 = a v^+ + b v^- $, using also \label{ab-ratio-oneleg-finite-2} 
for the ratio $b/a$, 
\begin{equation}
\label{joint-compatibility-one-leg-finite-1}
C_0 \left[ 
\begin{array}{c}
-\lambda + 1 \\ 
(-\lambda + p)(-\lambda +1) -(p - 1) 
\end{array}
\right] = 
a \left(
\left[
\begin{array}{c}
1   \\ 
\sigma_+ 
\end{array}
\right] + 
 \sigma_+^{2q-1} \left[
\begin{array}{c}
1  \\ 
\sigma_- 
\end{array} \right]
\right).
\end{equation} 
We may choose one of the components of $v$ freely. 
Choosing $C_0 = (-\lambda + 1)^{-1}$, 
the first component of \eqref{joint-compatibility-one-leg-finite-1} leads to 
$$ a = (1 + \sigma_+^{2 q})^{-1}. $$ 
Then the second 
component of \eqref{joint-compatibility-one-leg-finite-1} leads to 
\begin{equation}
\label{joint-compatibility-one-leg-finite-2}
(-\lambda + p) - \frac{p-1}{-\lambda + 1} =  \sigma_+
\frac{1 + \sigma_+^{2 q - 3}}{ 1 + \sigma_+^{2 q - 1}}. 
\end{equation} 
By \eqref{eval-transf-matrix} this is an equation for $\lambda$. It is precisely the 
equation   
$F_q (\lambda) = 0$, with $F_q$ as in 
\eqref{one-leg-edge-eigenval-eq-finite}.

We now examine the roots of $F_q(\lambda)$, $\lambda > 4$. 
Assume
first $4 < \lambda \leq p $ and $F_q(\lambda) = 0$. Then
the left hand side of \eqref{joint-compatibility-one-leg-finite-2} satisfies
$$ -\lambda + p -  \frac{p-1}{-\lambda + 1} >  \frac{p-1}{\lambda -1}  >    1 $$
by the hypothesis $\lambda \leq p $. One the other hand, $ \sigma_+ \in (-1,0)$
by $\lambda > 4$, 
therefore 
$$ \frac{1 + \sigma_+^{2 q - 3}}{ 1 + \sigma_+^{2 q - 1}} 
\in (0,1). $$
The right hand side of \eqref{joint-compatibility-one-leg-finite-2} is then 
in $(-1,0)$.  Thus $F_q$ has no roots in $(4,p]$.

By Proposition \ref{weyl-one-leg} we have $\lambda_1(L) \leq p+2 $, 
thus all solutions of $F(\lambda)$ must belong to the interval $(p,p+2]$.
It follows that $F(\lambda) = 0$ has exactly one solution in $(p,p+2]$,
otherwise we would have $\lambda_1(L) < p$, contradicting Proposition \ref{weyl-one-leg}.
\end{proof}

\subsection{Chain eigenvectors for $q$ finite}

In the arguments above, the condition $\lambda > 4$ was only used to
locate the roots of $F_q$. 
We can therefore use
The function $F_q(\lambda)$ to study the eigenvalues 
$0 \le \lambda \le 4 $. In this region, the roots $\sigma$ are
imaginary and on the unit circle. It is easier to describe them
using the phase 
\be\label{phase} 
\phi = {\rm atan} ({\sqrt{4-(2-\lambda)^2} \over 2-\lambda }) .\ee
From this expression, we can write $F_q$ as
\be\label{fphase}
F_q(\lambda)= (-\lambda +1){ \cos(\phi) + \cos(2(q-1) \phi) \over 1+\cos((2q-1) \phi)}
- (-\lambda +p)(-\lambda +1) +p-1 .\ee
This function of $\lambda$ is plotted in Fig. \ref{rbfc} for
the graph $C_4 \oplus K_6$.

\begin{figure} [H]
\centerline{ \epsfig{file=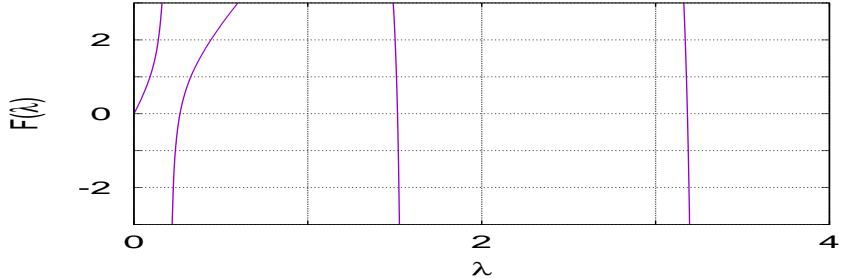,height=4 cm,width=12 cm,angle=0}}
	\caption{Plot of the function $F_q(\lambda)$ for $0 \le \lambda \le 4 $
	showing the three chain eigenvalues for the graph
	$C_4 \oplus K_6$. } 
\label{rbfc}
\end{figure}

The eigenvalues are the zeros of the function $F_q(\lambda)$ whose
graph is presented in Fig. \ref{rbfc}. Once these zeros are estimated,
relation \eqref{def-c2-one-leg-inf} allows to compute the ratio
of the eigenvector components at the edge and inside the clique.
The results are summarized in Table \ref{tab3} for the graph
        $C_4 \oplus K_6$ presented in section 2.2.
\begin{table} [H]
\centering
\begin{tabular}{|l|c|c|c|r|}
   \hline
$n$     & zero of $F_q(\lambda)$ & $\lambda_n$ & $C_0={1\over 1-\lambda_n}$  & $v_5^n/v_4^n $ \\ \hline
8       & 0.265033 & 0.26503  & 1.360                       & 1.361   \\
7       & 1.51622  & 1.5163   & -1.9365                     & -1.9368 \\
6       & 3.1832   & 3.1832   & -0.4580                     & -0.4580 \\
\hline
\end{tabular}
\caption{
Comparison of the zeros of the function $F_q(\lambda)$ and the chain eigenvalues $\lambda_n$
for the graph $C_4 \oplus K_6$ presented in section 2.2. The ratio of the eigenvectors
components at the edge and inside the clique are given in the last two columns. }
\label{tab3}
\end{table}
As can be seen, the agreement between the eigenvalues and the zeros of
$F_q(\lambda)$
is excellent. The ratios of the eigenvectors at the edge and inside the clique
also agree well with the ones of section 2.2. Without normalization, the eigenvector 
components $v_k^n$ in the chain section would be 
$$v_k^n = A_n \cos \left [ \lambda_n (k-5) \right ] + B_n \sin \left [ \lambda_n (k-5) \right ],$$
with $A_n,B_n$ chosen as to satisfy the boundary condition at the end of
the chain.

\subsection{Spectrum of $ K_p \oplus C_q $}

We have the following theorem summarizing
the spectrum of the Laplacian of $ K_p \oplus C_q $.
\begin{proposition}
\label{spectrum-finite}
Let $L$ be the Laplacian of the graph $ K_p \oplus  C_q $, $p \geq 3$.
Then the spectrum of $L $ consists of the eigenvalue $p$, of multiplicity $p - 2$,
a simple eigenvalue $ \lambda \in (p,p+2]$,
and $ q$ eigenvalues in the interval $[0,4]$
(that include the simple $0$ eigenvalue).
\end{proposition}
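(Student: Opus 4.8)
The plan is to assemble the full spectrum from the two $L$-invariant pieces $E_K$ and $E_K^{\perp}$ that have already been analyzed, and to close the count by dimension. First I would record that $K_p \oplus C_q$ has $n = p + q - 1$ nodes, so $L$ has $p+q-1$ eigenvalues counted with multiplicity. By Proposition \ref{internal-states-finite2} the subspace $E_K$ has dimension $p-2$ and satisfies $Lv = pv$ for all $v \in E_K$; since $L$ is symmetric and $E_K$ is an eigenspace, both $E_K$ and its orthogonal complement $E_K^{\perp}$, of dimension $q+1$, are $L$-invariant. Hence $\sigma(L)$ is, with multiplicity, the union of the eigenvalue $p$ repeated $p-2$ times (contributed by $E_K$) and the $q+1$ eigenvalues of the restriction $L|_{E_K^{\perp}}$.

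Next I would control $L|_{E_K^{\perp}}$ via Proposition \ref{edge-states-finite}. That result says $\lambda > 4$ is an eigenvalue of $L$ with eigenvector in $E_K^{\perp}$ if and only if $F_q(\lambda) = 0$, and that $F_q$ has exactly one root in $(p,p+2]$ and none in $(4,p]\cup(p+2,+\infty)$. Therefore $L|_{E_K^{\perp}}$ has exactly one eigenvalue above $4$, the edge eigenvalue $\lambda^{*}\in(p,p+2]$; since fixing $\lambda^{*}$ forces $v$ entirely through $C_0$ and the transfer-matrix recursion, its eigenspace is one-dimensional, so $\lambda^{*}$ is simple. Because $L$ is nonnegative, the remaining $q$ eigenvalues of $L|_{E_K^{\perp}}$ lie in $[0,4]$. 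The constant vector is orthogonal to $E_K$ (each basis vector of $E_K$ has components summing to zero) and is annihilated by $L$ as the graph is connected, so it is one of these and supplies the simple $0$ eigenvalue.

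I would then verify that the three groups are genuinely distinct, so the stated multiplicities hold: for $p \geq 5$ we have $\lambda^{*} > p > 4$ while every eigenvalue in $[0,4]$ is $\leq 4 < p$, so the value $p$ occurs only on $E_K$, with multiplicity exactly $p-2$, and $\lambda^{*}\notin\{p\}\cup[0,4]$. Summing the counts gives $(p-2)+1+q = p+q-1 = n$, so no eigenvalue is missed or double counted, which establishes the claimed decomposition.

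The main obstacle is the behavior at small $p$. Proposition \ref{edge-states-finite} is proved only for $p \geq 6$, and for $p\in\{3,4\}$ the clique value $p$, and possibly the edge value, falls inside $[0,4]$, producing the embedded eigenvalues flagged in the Remark following Proposition \ref{spectrum-infinite}. I would therefore handle $p \geq 5$ (where the separations $\lambda^{*}>4$ and $p>4$ hold) by the clean count above, and dispose of $p=3,4$ separately: the same dimension bookkeeping still yields $p-2$ clique eigenvalues, one edge eigenvalue, and $q$ eigenvalues in $[0,4]$, but some of these now coincide in value, so in those cases the statement must be read as a list with multiplicity rather than as a partition into distinct values.
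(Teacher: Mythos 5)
Your assembly is correct and is essentially the argument the paper intends: the paper gives no explicit proof of Proposition \ref{spectrum-finite}, which is stated as a summary of Lemma \ref{weyl-one-leg}, Proposition \ref{internal-states-finite2}, and Proposition \ref{edge-states-finite}, and your invariant-subspace decomposition $l^2 = E_K \oplus E_K^{\perp}$ with the dimension count $(p-2) + 1 + q = p + q - 1$ is the clean way to make that summary rigorous. Your route even improves slightly on the paper's implicit one: where the paper leans on the Courant--Weyl bounds to place the lower eigenvalues in $[0,4)$, you get the same conclusion from nonnegativity of $L$ plus the fact that $F_q$ has no roots in $(4,\infty)$ other than $\lambda^{*}$, so the only Weyl input you need is already inside Proposition \ref{edge-states-finite}.

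One point needs correction, though it is a defect you largely inherit from the paper's own statement. Your ``clean count'' is claimed for $p \geq 5$, but Proposition \ref{edge-states-finite} is stated (and proved, via Lemma \ref{weyl-one-leg}) only for $p \geq 6$ and $q \geq 3$; at $p = 5$ only the infinite-chain version, Proposition \ref{edge-states-infinite}, is available, so the existence and uniqueness of the root of $F_q$ in $(p,p+2]$ is not covered by the cited results there. Likewise nothing you cite covers $q \in \{1,2\}$. And for $p \in \{3,4\}$ your fallback sentence asserts ``one edge eigenvalue'' from ``the same dimension bookkeeping,'' but dimension counting alone cannot locate that eigenvalue in $(p,p+2]$; the paper itself only claims numerical evidence for it in the Remark after Proposition \ref{spectrum-infinite}. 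So the honest scope of your proof (and of the paper's proposition) is $p \geq 6$, $q \geq 3$; below that, the statement should be regarded as conjectural rather than merely ``read with multiplicity.''
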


\section{Two chains connected to a clique }

We now consider graphs denoted by $G = C_{q_1} \oplus K_p \oplus C_{q_2}$, 
with $p \geq 3$, $q_1$, $q_2 \geq 2$ positive integers, 
defined by the vertex set 
\begin{equation}
\label{Cq1-Kp-Cq2-sites}
{\cal V} =  {\cal V}_{q_1,-} \cup  {\cal V}_{p,-} \cup {\cal V}_{q_2,+}, 
\quad 
{\cal V}_{q_1,-} =  \{- q_1 - p + 2, -q_2 - p + 3,  \ldots,  -p - 1, - p  \}, 
\end{equation}
with ${\cal V}_{p,-}$, ${\cal V}_{q_1,+}$ as in \eqref{Kp-Cq-sites}, 
and a connectivity matrix $c$ that satisfies 
\begin{equation}
\label{Cq1-Kp-Cq2-connectivity-1}
c_{i,j} = -1, \quad \forall i,j \in {\cal V}_{p,-},  
\end{equation}
\begin{equation}
\label{Cq1-Kp-Cq2-connectivity-2}
c_{-p,-p + 1} = c_{-p + 1,-p} = c_{0,1} = c_{1,0} = -1,
\end{equation}
\begin{equation}
\label{Cq1-Kp-Cq2-connectivity-3}
c_{i,j} = -1, \quad \forall i,j \in {\cal V}_{q_1,-} \cup {\cal V}_{q_2,+}  \quad\hbox{with}\quad |i-j| = 1,   
\end{equation}
and $c_{i,j} = 0 $ for all other pairs $(i,j) \in {\cal V} \times {\cal V}$.
We now have 
a complete graph of $p$ nodes, the set ${\cal V}_{p,-} $, joined to two chains 
of $q_1 -1 $, $q_2 -1 $ nodes by   
\eqref{Cq1-Kp-Cq2-connectivity-2}.

\begin{lemma}
\label{weyl-two-leg}
Let $L $ be the graph Laplacian of the graph $ C_{q_1} \oplus K_p \oplus C_{q_2} $, $p \geq 6$,
$q_1$, $q_2 \geq 4$. 
Then $\lambda_1(L)$, $\lambda_2(L)  \in [p,p+2]$, and $\lambda_j(L)= p$,
$\forall j \in \{3, \ldots, p-1\}$.  
Also, $\lambda_p(L)$, $\lambda_{p+1}(L)  
\in (0,\hbox{min} \{ \lambda_1(L_{C_q}) + 2, p \})$ 
($\subset (0,p)$ if $p \geq 6$), $q = \hbox{max}\{q_1,q_2\}$,
and $\lambda_j(L) \in [0,4)$, $\forall j \in \{p+3,\ldots, p+ q_1 + q_2 - 2\}$.
\end{lemma}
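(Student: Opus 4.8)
The plan is to mirror the proof of Lemma~\ref{weyl-one-leg}, applying the Courant--Weyl inequalities \eqref{positive-weyl} to a splitting $L = A + B$. Set $n = p + q_1 + q_2 - 2$. Here $A$ is the $n \times n$ block-diagonal matrix with diagonal blocks $L_{C_{q_1}}$, $L_{K_p}$, $L_{C_{q_2}}$, and $B$ collects the two junction edges $\{-p,-p+1\}$ and $\{0,1\}$ of \eqref{Cq1-Kp-Cq2-connectivity-2}: it is the direct sum of two copies of $\left(\begin{smallmatrix} 1 & -1 \\ -1 & 1\end{smallmatrix}\right)$ supported on disjoint index pairs, with zeros elsewhere. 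The only structural change from the one-chain case is that $B$ now has rank $2$, so that $\lambda_1(B) = \lambda_2(B) = 2$ and $\lambda_j(B) = 0$ for $j \geq 3$; this is precisely what produces two edge eigenvalues rather than one. Both $A$ and $B$ are nonnegative, so \eqref{positive-weyl} applies.

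First I would record $\sigma(A)$. Since $L_{K_p}$ has eigenvalue $p$ of multiplicity $p-1$ and a simple $0$, while $\sigma(L_{C_{q_i}}) \subset [0,4)$, the eigenvalues of $A$ in decreasing order are $\lambda_j(A) = p$ for $j \in \{1,\dots,p-1\}$, then a block of nonzero chain eigenvalues $\mu_1 \ge \dots \ge \mu_{q_1+q_2-4}$ lying in $(0,4)$, then three zeros. The top chain eigenvalue is monotone in the chain length (from \eqref{chain-eval}), so $\lambda_p(A) = \mu_1 = \lambda_1(L_{C_q})$ with $q = \max\{q_1,q_2\}$, which is the quantity appearing in the statement. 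The lower bounds then follow immediately from $\lambda_j(A) \le \lambda_j(L)$: one gets $\lambda_j(L) \ge p$ for $j \le p-1$, $\lambda_p(L),\lambda_{p+1}(L) > 0$, and $\lambda_j(L) \ge 0$ in general.

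The substance of the argument is choosing, for each index $i$, an admissible Weyl partition $j_1 + j_2 = i+1$ realizing the upper bound, spending the two eigenvalues $2$ of $B$ with care. For the two largest indices, $(j_1,j_2) = (1,1)$ and $(1,2)$ give $\lambda_1(L),\lambda_2(L) \le \lambda_1(A) + 2 = p+2$. For $3 \le j \le p-1$ the partition $(j-2,3)$ gives $\lambda_j(L) \le \lambda_{j-2}(A) + \lambda_3(B) = p$, which with the matching lower bound forces $\lambda_j(L) = p$. For the two top chain eigenvalues I would use $(p,1)$ and $(1,p)$ for $\lambda_p$, and, crucially, $(p,2)$ together with $(1,p+1)$ for $\lambda_{p+1}$ --- the partition $(p,2)$ being exactly where the second eigenvalue $2$ of $B$ is consumed --- yielding $\lambda_p(L),\lambda_{p+1}(L) \le \min\{\lambda_1(L_{C_q})+2,\,p\}$. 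Finally, for every remaining index $j \ge p+2$ the partition $(j-2,3)$ pairs a chain eigenvalue (or a zero) of $A$ with $\lambda_3(B)=0$, giving $\lambda_j(L) \le \lambda_{j-2}(A) < 4$; this already covers, and slightly exceeds, the stated range $\{p+3,\dots,n\}$, and the bottom index gives $\lambda_n(L)=0$.

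To conclude I would invoke $p \ge 6$: since every chain eigenvalue is $<4$, we have $\lambda_1(L_{C_q}) + 2 < 6 \le p$, so the minimum in the bound for $\lambda_p,\lambda_{p+1}$ equals $\lambda_1(L_{C_q})+2$ and lies strictly below $p$, placing both eigenvalues in $(0,p)$. Assembling the lower and upper bounds index-by-index then gives the four assertions. I expect the only real difficulty to be combinatorial: checking that for each target index an admissible partition realizing the claimed bound exists, and in particular that the rank-$2$ structure of $B$ delivers its two ``$+2$'' contributions to exactly the two edge indices $\{1,2\}$ and the two top chain indices $\{p,p+1\}$, with the zero eigenvalues of $B$ sufficing everywhere else.
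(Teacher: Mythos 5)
Your proposal is correct and follows essentially the same route as the paper: the identical splitting $L = A + B$ with $A$ the block-diagonal matrix $\mathrm{diag}(L_{C_{q_1}}, L_{K_p}, L_{C_{q_2}})$ and $B$ the rank-two junction matrix with $\lambda_1(B)=\lambda_2(B)=2$, combined with the Courant--Weyl bounds \eqref{positive-weyl} index by index. If anything, your explicit bookkeeping of admissible partitions (in particular using $(j-2,3)$ uniformly for $j \geq p+2$, which yields $\lambda_j(L) \leq \lambda_{j-2}(A) < 4$ and even covers the index $p+2$ omitted from the statement) is cleaner than the paper's treatment of the tail indices, which is written with several typos.
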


\begin{proof}
We will apply the Weyl inequalities \eqref{positive-weyl} using
the decomposition $L = A + B$, 
where $A$ is the $(p + q_1 + q_2 - 2 ) \times (p + q_1 +  q_2 - 2)$
block diagonal matrix with 
blocks $L_{C_{q_1}}$, $L_{K_p}$, and $L_{C_{q_2}}$. 
Then the only non-vanishing elements of 
$B$ are $B(q_1 - 1,q_1 - 1) = B(q_1, q_1)
= B(q_1 + p - 1, q_1 + p - 1) = B(q_1 + p, q_1 + p) = 1$, 
and $B(q_1 ,q_1 - 1) = B(q_1 - 1, q_1) =
B(q_1 + p, q_1 + p-1) = B(q_1 + p - 1,q_1 + p) = -1$.

We then have $\lambda_j(A) = p$ if $j \in \{1, \ldots, p \}$, and 
$\lambda_{p-1+k}(A) = \in (0,4)$ for $k \in \{1, q_1 + q_1 - 4 \}$.
Also  
$ \lambda_{p + q_1 + q_2 - 4}(A)= \lambda_{p + q_1 + q_2 - 3}(A) =
\lambda_{p + q_1 + q_2 -2}(A)  = 0$.
Furthermore, $\lambda_1(B) = \lambda_2(B) = 2$, and $\lambda_j(B) = 0 $,
$\forall j \in \{ 3, \ldots, p + q_1 + q_2 - 2\}$. 

We use $\lambda_j(A) \leq \lambda_j(A + B)$,
$j = 1, \ldots, p + q_1 + q_2 - 2$, 
see \eqref{positive-weyl},
as lower bounds.

For the upper bounds we first have
$$ \lambda_1(A + B) \leq \lambda_1(A ) + \lambda_1(B) = p+2, $$
$$  \lambda_2(A + B) \leq \hbox{min} \{ \lambda_2(A ) + \lambda_1(B), \lambda_1(A ) + \lambda_(B) \} = p+2. $$
For $j \in {3, \ldots, p-1\ }$ we have  
$$ \lambda_j(A + B) \leq \hbox{min} \{\lambda_j(A) + \lambda_1(B),  \lambda_{j-1}(A) +
\lambda_2(B), \ldots, \lambda_1(A) \} = p, $$
using $\lambda_j(B) = 0 $, $\forall j \in \{ 3, \ldots, p + q_1 + q_2 - 2 \}$.  
Also,
\begin{eqnarray}
  \nonumber
 \lambda_p(A + B) &  \leq  &  
\hbox{min}
\{\lambda_p(A) + \lambda_1(B), \lambda_{p-1}(A) + \lambda_2(B), \ldots, \lambda_1(A) \}  \\
\nonumber
& = & 
\hbox{min} \{ \hbox{max}\{\lambda_1(L_{C_{q_1}}), \lambda_1(L_{C_{q_2}}) \} + 2, p\},
\end{eqnarray}
\begin{eqnarray}
  \nonumber
  \lambda_{p+1}(A + B)  &  \leq  &
 \hbox{min}
\{\lambda_{p+1}(A) + \lambda_1(B), \lambda_{p}(A) + \lambda_2(B), \ldots, \lambda_1(A) \}  \\ 
\nonumber
& = &  \hbox{min} \{  
\hbox{max}\{\lambda_1(L_{C_{q_1}}), \lambda_1(L_{C_{q_2}}) \}
 + 2, p \}. 
\end{eqnarray}
If $p \geq 6$ then 
$ \lambda_1(L_{C_q}) + 2 < 6 \leq p$, therefore 
$\lambda_p(A + B)$, $\lambda_{p+1}(A + B) < p$.

For $j \in \{p+2,\ldots, p+ q_1 + q_2 - 5 \}$, i.e. 
$j = p-1+k$, $k \in \{3, \ldots, p + q_1 + q_2 - 4\}$,  
we have
\begin{eqnarray}
  \nonumber
 \lambda_{p-1+k}(A + B) & \leq & 
 \hbox{min} \{ \hbox{max}\{\lambda_{1}(L_{C_{q_1}}), \lambda_{1}(L_{C_{q_1}}) \} + 2 \ldots, \lambda_{1}(A)\}  \\
 \nonumber
 &  = &    
\hbox{min} \{
\hbox{max}\{\lambda_{1}(L_{C_{q_1}}), \lambda_{1}(L_{C_{q_1}}) \} + 2, p \},
\end{eqnarray}

Also 
$$ \lambda_{p+ q_1 + q_2 - 4}(A + B) \leq \min_{l=1,2}\{2, \lambda_{q_j-2}(L_{C_{q_l}}) + 2, 
\lambda_{q_l-3}(L_{C_{q_l}})\}, $$
therefore $ \lambda_{p+ q_1 + q_2 -4}(A + B) < 4 $ if $q_1$, $q_2 \geq 4$
using 
\eqref{chain-eval} for $ \lambda_{q-3}(L_{C_{q}})$, $q \geq 4$, 
and 
$$ \lambda_{p+q_1 + q_2 -3}(A + B) \leq \min_{l=1,2}\{2, \lambda_{q_j-2}(L_{C_{q_l}}) < 4, $$
$ \lambda_{p+q_1 + q_2 - 2}(A + B) = 0$.
The statement follows by combining the above with the lower bounds. 
\end{proof}

\subsection{Clique eigenvalues}

\begin{proposition}
  \label{clique-evect-twoleg-inf}
 Let $L$ be the Laplacian of the graph $ C_{q_1} \oplus K_p \oplus C_{q_2}$, $p \geq 4$, $q_1$, $q_2 \geq 2$.
Then there exists a subspace $E_K$ of dimension $p-3$ such that
$v \in E_K$ satisfies $L v = p v$, and $v_j = 0$, $\forall j \notin \{-p + 2, \ldots, - 1 \}$.
\end{proposition}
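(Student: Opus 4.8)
The plan is to mimic the construction in Proposition~\ref{internal-states-finite2}, the only structural difference being that the clique $K_p$ is now attached to the rest of the graph at \emph{two} junction vertices, namely $-p+1$ (joined to the first chain, see \eqref{Cq1-Kp-Cq2-connectivity-2}) and $0$ (joined to the second chain), rather than one. Consequently the desired eigenvectors must vanish at both of these vertices as well as at all chain vertices, so their support is confined to the $p-2$ interior clique vertices $\{-p+2,\ldots,-1\}$. The loss of one admissible vertex relative to the one-chain case is exactly what reduces the dimension from $p-2$ to $p-3$.

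Concretely, first I would define, for each $k \in \{2,\ldots,p-2\}$, the vector $v^k \in l^2$ by $v^k_{-1}=1$, $v^k_{-k}=-1$, and $v^k_j=0$ for all other $j$. These $p-3$ vectors are supported in $\{-p+2,\ldots,-1\}$ and are manifestly linearly independent (each is anchored at $-1$ and has a distinct second nonzero entry), so they span a subspace $E_K$ of dimension $p-3$. It then remains to verify $L v^k = p\,v^k$ vertex by vertex, as in Proposition~\ref{internal-states-finite2}.

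The verification splits into the same cases as before, the crucial observation being that $\sum_j v^k_j = 1-1 = 0$. At a chain vertex both $v^k$ and all its neighbours vanish, so $(Lv^k)=0=p\,v^k$ there. At any clique vertex $i$ of degree $p-1$ one computes, using \eqref{Cq1-Kp-Cq2-connectivity-1}, that $(Lv^k)_i = (p-1)v^k_i - \sum_{j\in K_p,\,j\neq i} v^k_j = p\,v^k_i - \sum_{j\in K_p} v^k_j = p\,v^k_i$, which handles the interior clique vertices both in and outside the support (exactly as in \eqref{case-in-in}, \eqref{case-on-site-1}, \eqref{case-on-site-2}). At each of the two junction vertices $-p+1$ and $0$, which have degree $p$, one has $v^k=0$ and the extra chain neighbour ($-p$, respectively $1$) also contributes $0$, so $(Lv^k) = -\sum_{j\in K_p} v^k_j = 0 = p\,v^k$ there as well, again by the vanishing of the sum.

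I expect no genuine obstacle here: the single nontrivial point is the bookkeeping of the two junction vertices and the confirmation that the dimension is $p-3$ rather than $p-2$. As a cross-check and a more structural alternative, I would note that the eigenspace of $L_{K_p}$ for the eigenvalue $p$ is precisely the sum-zero subspace $\{v : \sum_j v_j = 0\}$, of dimension $p-1$; imposing the two junction conditions $v_{-p+1}=0$ and $v_0=0$ cuts this down to dimension $p-3$, since the three functionals ``total sum'', ``value at $-p+1$'', and ``value at $0$'' are linearly independent for $p\geq 4$. Any such vector extends by zero to an eigenvector of $L$ with eigenvalue $p$, in the spirit of the padding argument following Proposition~\ref{internal-states-finite2}.
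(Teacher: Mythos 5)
Your proposal is correct and takes essentially the same approach as the paper: the paper's proof defines exactly the same vectors $v^k$, $k \in \{2, \ldots, p-2\}$, anchored at vertex $-1$, and verifies $L v^k = p v^k$ by the same vertex-by-vertex case analysis (citing the computations \eqref{case-in-out}--\eqref{case-on-site-2} from the one-chain case). Your zero-sum bookkeeping is just a cleaner organization of those same cases, and your structural cross-check is the padding argument the paper itself notes as an alternative after Proposition \ref{internal-states-finite2}.
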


\begin{proof}
Let $k \in \{2, \ldots, p-2 \}$ and define the vectors $v^k \in l^2$ by 
\begin{equation}
\label{intern-evec-two-leg}
	v^k_{-1} = 1, \quad v^k_{-k} = -1, \quad v^k_{j} = 0, \quad \forall j \in {\cal V}\setminus\{-1, -k\}. 
\end{equation}
The $v^k$ are linearly independent and 
the statement will follow by checking that 
$L v^k =  p v^k$, $\forall k \in \{2, \ldots, p-2 \}$.
	Let $ i \geq 1$ or $i \leq -(p-1) - 1$. Then $ (L v^k)_{i} = p v^k_{i}$,  
as in \eqref{case-in-out}. The 
case $i \in \{-(p-1), \ldots, 0 \}\setminus\{-1,-k\} $ is   
as in 
\eqref{case-in-out}. The cases $i= -1$, $-k$ follow from 
\eqref{case-on-site-1}, \eqref{case-on-site-2} respectively. 
\end{proof}

As for the chain connected to a clique, the result extends to
the case of two infinite chains $q_1$, $q_2 = \infty$.
\begin{proposition}
  \label{infinite-clique-twoleg} 
Let $L$ be the Laplacian of the graph $ C_{\infty} \oplus K_p \oplus C_{\infty}$, $p \geq 4$. 
Then there exists a subspace $E_K \in l^2$ of dimension $p-3$ such that 
$v \in E_K$ satisfies $L v =  p v$, and $v_j = 0$, $\forall j \notin \{-p + 2, \ldots, - 1 \}$.
\end{proposition}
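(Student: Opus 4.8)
The plan is to reuse the finitely-supported clique eigenvectors constructed in Proposition \ref{clique-evect-twoleg-inf} for the two-chain finite graph, and to observe that they remain eigenvectors once both chains are extended to infinity; this is exactly the mechanism behind the single-chain extension in Proposition \ref{internal-states-infinite}.

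First I would define, for $k \in \{2, \ldots, p-2\}$, the same vectors as in \eqref{intern-evec-two-leg}: $v^k_{-1} = 1$, $v^k_{-k} = -1$, and $v^k_j = 0$ for all other $j \in {\cal V}$. Each $v^k$ is supported on the two interior clique sites $\{-1,-k\} \subset \{-p+2, \ldots, -1\}$, hence has finite support and lies in $l^2$; this settles the requirement $E_K \subset l^2$, which is the only genuinely new point in passing from the finite to the infinite graph. Since the $v^k$ have pairwise distinct supports they are linearly independent, so $E_K = \mathrm{span}\{v^k\}$ has dimension $p-3$ and satisfies the stated vanishing condition $v_j = 0$ for $j \notin \{-p+2, \ldots, -1\}$.

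The remaining step is to verify $L v^k = p v^k$ at every vertex of $C_\infty \oplus K_p \oplus C_\infty$. The key observation is that $(L v^k)_i$ depends only on $v^k_i$ and on the values at the neighbors of $i$, and that the local connectivity around the clique and its two junction sites $-p+1$, $0$ is identical for the finite and infinite graphs. For any node $i$ lying in either chain, both $v^k_i$ and all its neighboring values vanish, so $(L v^k)_i = 0 = p v^k_i$, as in \eqref{case-in-out}; in particular the extra chain sites introduced by letting $q_1$, $q_2 \to \infty$ contribute nothing. For the interior clique nodes away from $\{-1,-k\}$, and for the two support sites $-1$, $-k$, the computations \eqref{case-in-in}, \eqref{case-on-site-1}, \eqref{case-on-site-2} apply verbatim, since they involve only the complete-graph edges and the degrees $d_{-1} = d_{-k} = p-1$, none of which change when the chains are lengthened.

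I do not anticipate any real obstacle: the entire content is that a finitely-supported eigenvector of a finite graph persists as an eigenvector under an extension that only adds vertices away from its support while leaving the connectivity around that support unchanged. The sole point deserving explicit mention is the membership $E_K \subset l^2$, which is immediate from finite support. The proposition then follows exactly as in Propositions \ref{internal-states-infinite} and \ref{clique-evect-twoleg-inf}.
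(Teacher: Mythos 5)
Your proposal is correct and takes essentially the same route as the paper: the paper also obtains the infinite-chain result by noting that the finitely supported vectors of Proposition \ref{clique-evect-twoleg-inf} can be padded with zeros along the infinite chains, since this leaves the connectivity around their support --- and hence the local verifications \eqref{case-in-out}, \eqref{case-in-in}, \eqref{case-on-site-1}, \eqref{case-on-site-2} --- unchanged, and finite support gives $E_K \subset l^2$. One cosmetic point: linear independence of the $v^k$ follows not from their supports being pairwise distinct (which in general does not suffice) but from each $v^k$ being the only member of the family that is nonzero at the site $-k$.
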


\subsection{Edge eigenvalues}

Let $v \in l^2$ be an eigenvector of $L$ for
the graph $ C_{q} \oplus K_p \oplus C_{q}$, $q \geq 2$ (including the case $q = \infty$). 
Then $v$ is symmetric if $v_{-p-1 - n} = v_n$, for all integer $n \geq 0 $,
and antisymmetric if  $v_{-p-1 - n} = -v_n$, for all integer $n \geq 0 $.

\begin{proposition}
\label{edge-states-infinite2}
Let $L$ be the Laplacian of the graph $ C_{\infty} \oplus K_p \oplus C_{\infty}$,
    $p \geq 5$, and 
let $E_K$ be as in Proposition \ref{clique-evect-twoleg-inf}. 
Then all 
eigenvectors $v \in l^2 \cap E_K^{\perp}$ of $L$ corresponding to eigenvalues $\lambda > 4$
are either symmetric or antisymmetric. 
$\lambda > 4 $ is the eigenvalue of a symmetric eigenvector $v^S \in l^2 \cap E_K^{\perp}$ 
of $L$ if and 
only if $F_S(\lambda) = 0$, where 
\begin{equation}
\label{two-leg-edge-eigenval-eq-sym}
F_S(\lambda) = (-\lambda + 2) \sigma_+ - (-\lambda + p -1)(-\lambda + 2) + 2(p-2).
\end{equation}
$\lambda > 4 $ is 
the eigenvalue of a symmetric eigenvector $v^A \in l^2 \cap E_K^{\perp}$ of $L$ if and 
only if $F_A(\lambda) = 0$, with
\begin{equation}
\label{two-leg-edge-eigenval-eq-antisym}
F_A(\lambda) =  \sigma_+  - (-\lambda + p +1). 
\end{equation}
$\sigma_+ = \sigma_+(\lambda)$
in \eqref{two-leg-edge-eigenval-eq-sym}, \eqref{two-leg-edge-eigenval-eq-antisym} is as in \eqref{eval-transf-matrix},
Furthermore, both equations $F_A = 0$, $F_S = 0$ have exactly one solution in $(p,p+2)$ and  
no solutions in 
$(4,p] \cup[p+2, + \infty)$. 
\end{proposition}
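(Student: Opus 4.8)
The plan is to follow the same constructive pattern used in the proof of Proposition \ref{edge-states-infinite}, now exploiting the reflection symmetry of the graph $C_\infty \oplus K_p \oplus C_\infty$. First I would show that any eigenvector $v \in l^2 \cap E_K^\perp$ with $\lambda > 4$ splits into a symmetric and an antisymmetric part. The reflection $R$ sending node $n \mapsto -p-1-n$ commutes with $L$ and preserves $E_K^\perp$ (since $E_K$ is itself reflection-invariant), so $L$ restricted to $E_K^\perp$ decomposes into the $R$-symmetric and $R$-antisymmetric subspaces; each eigenvector may be taken to lie in one of these. The orthogonality to $E_K$ forces the values on the clique interior nodes to be constant, $v_{-p+2} = \cdots = v_{-1} = C_0$, exactly as in \eqref{def-c1-one-leg-inf}.

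Next I would write out $Lv = \lambda v$ at the two junction nodes and inside the clique. The clique now has two attachment points ($-p$ and $0$), so the interior-node equation yields a relation tying $C_0$ to the two boundary values $v_{-p}$ and $v_0$. In the symmetric case $v_{-p-1-n} = v_n$ gives $v_{-p} = v_1$ and the two chains are mirror images, so I would reduce to a single half-chain with an effective boundary condition at the clique; summing the clique equation over the interior nodes produces the combination $(-\lambda + p - 1)(-\lambda + 2)$ and the $2(p-2)$ term appearing in $F_S$. In the antisymmetric case $v_{-p-1-n} = -v_n$ forces $C_0 = 0$ (the interior clique values must vanish by antisymmetry through the midpoint), which collapses the clique contribution and leaves the simpler linear relation $\sigma_+ = -\lambda + p + 1$ of $F_A$. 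In both cases, as in Proposition \ref{edge-states-infinite}, the transfer matrix \eqref{def-transf-matrix} governs the chain, and $\lambda > 4$ forces $|\sigma_-| > 1$, so $l^2$-summability requires selecting only the $\sigma_+$ eigendirection at $z_0$, giving the single decay condition $v_1 = \sigma_+ v_0$. Matching this against the clique relation produces $F_S(\lambda) = 0$ or $F_A(\lambda) = 0$ respectively.

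For the root-counting I would argue exactly as before: establish sign changes of $F_S$, $F_A$ at the endpoints $p$ and $p+2$ using $\sigma_+ \in (-1,0)$, then rule out roots in $(4,p]$ and $[p+2,+\infty)$ by the same kind of inequality argument that in Proposition \ref{edge-states-infinite} derived a contradiction with $\sigma_+ \in (-1,0)$. Uniqueness in $(p,p+2)$ can either be obtained by a monotonicity computation of $F_S'$, $F_A'$ in the variable $x = -\lambda + 2$ (mirroring the $\tilde F'(x)$ argument), or more cheaply from the Courant--Weyl bounds of Lemma \ref{weyl-two-leg}, which guarantee that at most two eigenvalues lie above $p$; since symmetric and antisymmetric edge eigenvectors are orthogonal, one root of each gives exactly the two edge eigenvalues and forbids a second root of either.

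The main obstacle I expect is the bookkeeping in the symmetric case: because the clique is attached to two chains rather than one, the clique equations couple $C_0$ to both $v_{-p}$ and $v_0$, and I must carefully use the symmetry $v_{-p} = v_1 = \sigma_+ v_0$ together with the interior relation to arrive at the clean form \eqref{two-leg-edge-eigenval-eq-sym} with the $(-\lambda+2)$ factor replacing the $(-\lambda+1)$ factor of the one-chain case. Verifying that the symmetry reduction is consistent with $l^2$-decay on both chains simultaneously, and that no sign errors creep into the coefficient $2(p-2)$, is the delicate step; the antisymmetric case is comparatively routine once $C_0 = 0$ is established.
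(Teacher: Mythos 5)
Your outline reproduces the paper's constructive scheme (clique constancy $v_{-p+2}=\cdots=v_{-1}=C_0$ from $E_K^\perp$, the transfer-matrix decay conditions $C_2=\sigma_+C_1$, $C_{-2}=\sigma_+C_{-1}$, and the matching that yields $F_S$, $F_A$ — your symmetric bookkeeping indeed gives $2(p-2)=(-\lambda+p-1-\sigma_+)(-\lambda+2)$, which is $F_S=0$), but it has two genuine gaps. First, your reflection-operator argument proves less than the proposition asserts. Decomposing $v=\tfrac12(v+Rv)+\tfrac12(v-Rv)$ shows only that each eigenspace is \emph{spanned} by symmetric and antisymmetric eigenvectors, i.e.\ that eigenvectors \emph{may be chosen} of one type; the proposition claims that \emph{every} $v\in l^2\cap E_K^\perp$ with $\lambda>4$ is symmetric or antisymmetric. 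These differ exactly when some $\lambda>4$ is a common root of $F_S$ and $F_A$, in which case a sum of a symmetric and an antisymmetric eigenvector is an eigenvector of neither type. You must therefore exclude common roots. In your framework this is a one-line check you did not make: $F_A(\lambda)=0$ gives $\sigma_+=-\lambda+p+1$, and substituting into \eqref{two-leg-edge-eigenval-eq-sym} gives $F_S(\lambda)=2(-\lambda+2)+2(p-2)=2(p-\lambda)\neq 0$, since the roots of $F_A$ lie in $(p,p+2)$. The paper sidesteps this entirely by deriving the dichotomy algebraically from the linear system: subtracting the two junction equations gives $(C_{-1}-C_1)\,(\sigma_+ +\lambda-p-1)=0$ (see \eqref{cond-2minus3-two-leg-inf}), so either $C_{-1}=C_1$ (symmetric), or $F_A(\lambda)=0$, in which case \eqref{cond-2plus3-two-leg-inf-1} and \eqref{first-cond-two-leg-inf} force $C_0=0$ and hence $C_{-1}=-C_1$; this yields directly that any eigenvector is of one of the two types.

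Second, your ``cheaper'' route to uniqueness via Lemma \ref{weyl-two-leg} is not available here. The Courant--Weyl inequalities are statements about finite Hermitian matrices; Lemma \ref{weyl-two-leg} concerns the finite graph $C_{q_1}\oplus K_p\oplus C_{q_2}$ and is indeed what the paper invokes in the finite-chain Proposition \ref{2-leg-edge-states-finite}, but for $C_\infty\oplus K_p\oplus C_\infty$ there is no a priori count of ``at most two eigenvalues above $p$.'' Hence the monotonicity computation, which you list as optional, is mandatory: as in the paper, one sets $x=-\lambda+2$ and uses ${\tilde\sigma}_+'(x)<-1/2$ on $(-p,-p+2)$ to get $F_A'(\lambda)>0$ and $F_S'(\lambda)<0$ on $(p,p+2)$, which combined with the endpoint sign changes and the exclusion of roots in $(4,p]\cup[p+2,+\infty)$ gives exactly one root of each. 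With these two repairs — the no-common-root check (or the paper's algebraic dichotomy) and the monotonicity argument in place of Courant--Weyl — your proof goes through and is essentially the paper's.
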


\begin{proof}
We construct $v \in  l^2$ that satisfy $L v = \lambda v$, with $\lambda > 4 $.
  We first let $v$ be orthogonal to the span of the $p-3$ eigenvectors of
  Proposition \ref{internal-states-infinite} , or 
  \begin{equation*}
- v_{-p+2} + v_{-1} = 0, \quad \ldots - v_{-2} + v_{-1} = 0,  
\end{equation*}
  therefore
  \begin{equation}
   \label{def-c0-two-leg-inf}
v_{-p+2} = \ldots = v_{-1} = C_0   
\end{equation}
for some real $C_0$.
We also let
  \begin{equation}
   \label{def-c1-two-leg-inf}
C_1 = v_{0}, \quad  C_{-1} = v_{-p + 1}.    
\end{equation}
The condition
$L v = \lambda v$ at the nodes $k = -p+2, \ldots, -1$ leads to 
\begin{equation}
(p-3) C_0 + C_1 + C_{-1} - (p-1) C_0  = -\lambda C_0, 
\end{equation}
for all $k = -p+2, \ldots, -1$, or 
\begin{equation}
   \label{first-cond-two-leg-inf}
   C_1 + C_{-1} = (-\lambda + 2)C_0. 
\end{equation}

Let
\begin{equation}
   \label{def-c2-two-leg-inf}
C_2 = v_1, \quad  c_{-2} = v_{-p}.    
\end{equation}
Then 
$L v = \lambda v$ at $k = 0 $ is
  \begin{equation*}
   \label{eigenvaleq-site0-two-leg-inf}
- \sum_{j = - p+1}^{-1} c_{0,j} v_{j} + d_{0} v_{0} - v_{1} = \lambda v_{0},    
\end{equation*}
and reduces to
   \begin{equation}
   \label{second-cond-two-leg-inf}
C_{-1} + (p-2)C_0 + C_2 = (-\lambda + p)C_1.    
\end{equation} 
Similarly, $L v = \lambda v$ at $k = -p+1$ is
  \begin{equation*}
   \label{eigenvaleq-site(-p+1)-two-leg-inf}
- \sum_{j = - p+1}^{-1} c_{-p+1,j} v_{j} + d_{-p+1} v_{-p+1} - v_{-p} = \lambda v_{-p+1},    
\end{equation*}
and reduces to
\begin{equation}
\label{third-cond-two-leg-inf}
C_{1} + (p-2)C_0 + C_{-2} = (-\lambda + p)C_{-1}.    
\end{equation}  

Considering $L v = \lambda v$ at the nodes $j \geq 1$, we argue as in the proof
of Proposition \ref{edge-states-infinite} 
to obtain 
\begin{equation}
\label{cond-four-two-leg-inf}
C_{2} = \sigma_+ C_{1}.    
\end{equation}  

On the other hand, 
$L v = \lambda v$ at the nodes $j \geq -p$
is
\begin{equation}  
\label{def-vgr1-two-leg-inf}
- v_{j-1} + 2 v_{j} - v_{j+1} = \lambda v_{j}, \quad \forall j \leq - p.   
 \end{equation}  
Letting $$ z_j = \begin{pmatrix} v_j \\ v_{j+1} \end{pmatrix}, ~~j \geq -p,$$
\eqref{def-vgr1-two-leg-inf} is equivalent to
\begin{equation}
\label{transf-eq-two-leg-inf}
z_{j+1} = M_{\lambda} z_j, \quad \forall j \leq -p -1,   
\end{equation}
with $M_\lambda$ as in \eqref{def-transf-matrix},
or
\begin{equation}
\label{transf-eq-two-leg-inf}
z_{-p-n} = (M_{\lambda}^{-1})^n z_p, \quad \forall n \geq 0,   
\end{equation}
therefore if $z_{-p} = av^+ + b v^-$, 
$a$, $b$ real, we have 
\begin{equation}
\label{sol-transf-eq-two-leg-inf}
z_{-p-n} = a \sigma_+^{-n} v^+ + b \sigma_-^{-n}v^- =  a \sigma_-^{n} v^+ + b \sigma_+^{n}v^-, \quad \forall, n \geq 0   
\end{equation}
by $\sigma_- \sigma_+ = 1$.
By $\lambda >4$, $|\sigma_-| > 1$, the condition $v \in l^2$ leads to $a = 0$.
We must then require 
$$z_{-p} = \begin{pmatrix} v_{-p} \\ v_{-p+1} \end{pmatrix} = 
b \begin{pmatrix} 1 \\ \sigma_- \end{pmatrix} $$ for some real $b$, or
equivalently 
\begin{equation}
\label{cond-five-two-leg-inf}  
  C_{-2} = \sigma_+ C_{-1} 
\end{equation}
by  \eqref{def-c1-two-leg-inf}, \eqref{def-c2-two-leg-inf}.

The possible eigenvectors $v \in l^2 \cap {E_K}^{\perp}$ of $L$ are determined by equations 
\eqref{first-cond-two-leg-inf},  
\eqref{second-cond-two-leg-inf}, 
\eqref{third-cond-two-leg-inf}, 
\eqref{cond-four-two-leg-inf}, \eqref{cond-five-two-leg-inf} for the $C_{-2}, \ldots, C_{2}$.
We claim that there are only two nontrivial solutions, 
corresponding to $C_{-1} = C_1$ and $C_{-1} = - C_{1}$, leading to 
symmetric and antisymmetric eigenvectors respectively.

To show the claim, we first use \eqref{cond-four-two-leg-inf}, \eqref{cond-five-two-leg-inf} to 
reduce the system to three equations for $C_{-1}$, $C_0$, $C_1$. We 
then add and subtract 
\eqref{second-cond-two-leg-inf}, 
\eqref{third-cond-two-leg-inf},
to obtain 
\begin{equation}
\label{cond-2plus3-two-leg-inf-1}
(C_{-1} + C_1)(-\lambda + p - \sigma_+ -1) = 2(p-2)C_0, 
\end{equation}
and 
\begin{equation}
\label{cond-2minus3-two-leg-inf}
(C_{-1} - C_1)(-1 + \sigma_+ + \lambda - p) =0.
\end{equation}
By \eqref{cond-2minus3-two-leg-inf} we either have $C_{-1} = C_1$, the symmetric 
case, or 
\begin{equation}
\label{antisym-eval-eq-1}
\sigma_+ = -\lambda + p + 1, 
\end{equation}
which by \eqref{cond-2plus3-two-leg-inf-1} implies  
\begin{equation}
\label{cond-2plus3-two-leg-inf-2}
- (C_{-1} + C_1) = (p-2) C_0.
\end{equation}
Suppose that $C_0 \neq 0$, then \eqref{cond-2plus3-two-leg-inf-2},  
\eqref{first-cond-two-leg-inf} imply $\lambda = p$. Then 
\eqref{antisym-eval-eq-1} implies $\sigma_+ = 1$, but this contradicts 
$\sigma_+ \in (-1,0)$,
from \eqref{eval-transf-matrix} with $\lambda > 4 $. 
It follows that $C_0 = 0$. By \eqref{antisym-eval-eq-1} we then have $C_{-1} = - C_1$,
the antisymmetric case.

To see that the corresponding eigenvalues belong to $(p,p+2)$ we first consider 
the antisymmetric case $C_{-1} + C_1 = 0 $. 
The eigenvalue $\lambda$ then   
satisfies \eqref{antisym-eval-eq-1}, with $\sigma_+$ as in \eqref{eval-transf-matrix}.
Let 
\begin{equation}
\label{F-antisym-two-leg-inf}
F_A(\lambda) = \sigma_+ - (-\lambda + p +1).
\end{equation}
$F_A$ is continuous and $ F_A(p) = \sigma_+ - 1 < 0$ 
by $\sigma_+ \in (-1,0)$. Also, 
$ F_A(p+2) = \sigma_+ + 1 >  0$ 
by $\sigma_+ \in (-1,0)$. Therefore we have at least one antisymmetric 
eigenvector with eigenvalue $\lambda \in (p,p+2)$.

We check that  $ F_A'(\lambda) > 0$ for $\lambda \in (p,p+2)$.
Let $x = -\lambda + 2$, and examine ${\tilde F}(x) = F(\lambda(x))$ for 
$x \in (-p, -p+2)$.
We have
$$  {\tilde F}'_A(x) = {\tilde \sigma}'(x) - 1 < 0. $$
We saw in the proof of Proposition \ref{edge-states-infinite}
that $ {\tilde \sigma}'(x) < -1/2$, for all $x \in (-p,-p+2)$, therefore
$F_A'(\lambda) = - {\tilde F}_A'(x)  >  0$, for all $\lambda \in (p,p+2)$.

Also, $\lambda \leq  p$ would imply $\sigma_+  \geq 1$ by \eqref{antisym-eval-eq-1}, contradicting 
$\sigma_+ \in (-1,0)$. Similarly, $\lambda \geq  p + 2$ and \eqref{antisym-eval-eq-1}
would imply $\sigma_+  \leq -1$, a contradiction. 

We conclude 
that equation \eqref{antisym-eval-eq-1} for antisymmetric eigenvectors 
has exactly one solution in $(p,p+2)$ and no other solution
satisfying 
$\lambda > 4$.

In the symmetric case $C_{-1} = C_1$, by 
\eqref{first-cond-two-leg-inf}
we must also have $C_0 \neq 0$, otherwise we have a trivial solution. 
Combining \eqref{cond-2plus3-two-leg-inf-1} and 
\eqref{first-cond-two-leg-inf},
the corresponding eigenvalue $\lambda $ must then satisfy   
\begin{equation}
\label{sym-eigenval-two-leg-inf-1}
(-\lambda + 2)(-\lambda + p - \sigma_+ - 1) = 2(p-2), 
\end{equation}
or equivalently 
\begin{equation}
\label{sym-eigenval-two-leg-inf-2}
\sigma_+ = -\lambda + p -1 - 2\frac{p-2}{-\lambda + 2}. 
\end{equation}
Let 
\begin{equation}
\label{F-sym-two-leg-inf}
F_S(\lambda) = (-\lambda + 2) \sigma_+ - (-\lambda + p -1)(-\lambda + 2) + 2(p-2), 
\end{equation}
with with $\sigma_+$ as in \eqref{eval-transf-matrix}.
$F_S$ is continuous 
and we have 
$$ F_S(p) = (p-2)(2 - \sigma_+) > 0 $$
by $\sigma_+ \in (-1,0)$. Also, 
$$ F_S(p+2) = -p(1 + \sigma_+)- 4 < 0 $$
by $\sigma_+ \in (-1,0)$.
There then at least one symmetric 
eigenvector with eigenvalue $\lambda \in (p,p+2)$.

We see that $ F_S'(\lambda) < 0$ for $\lambda \in (p,p+2)$.
Let $x = -\lambda + 2$, and examine ${\tilde F}(x) = F(\lambda(x))$ for $x \in (-p,-p+2)$.
We have 
$$  {\tilde F}_S'(x) = {\tilde \sigma}_+(x) + x  {\tilde \sigma}'_+(x) - 2x - p + 3. $$
By $x < - p +2$ we have $ - 2 x > 2 p - 4 $,
therefore
$$  {\tilde F}_S'(x) > x  {\tilde \sigma}'_+(x) + p - 1 +  {\tilde \sigma}_+(x). $$
We saw in the proof of Proposition \ref{edge-states-infinite}
that $ {\tilde \sigma}'(x) < -1/2$, for all $x \in (-p,-p+2)$,
therefore
$ x  {\tilde \sigma}'_+(x) > 0$,
$\forall x \in (-p,-p+2)$. Also $p - 1 +  {\tilde \sigma}_+ > p- 2 > 0$.
Thus 
$F_S'(\lambda) =  - {\tilde F}_S'(x)  <  0$, for all $\lambda \in (p,p+2)$.

Also, suppose that $\lambda \leq  p $, then $ -(-\lambda + 2) \leq p-2$ and 
\ref{sym-eigenval-two-leg-inf-2} would imply 
$$ \sigma_+ \geq -1 + 2 \frac{p-2}{\lambda - 2} \geq 1, $$ 
contradicting $\sigma_+ \in (-1,0)$.
Similarly, $\lambda \geq p + 2$ and 
\ref{sym-eigenval-two-leg-inf-2} imply   
$$ \sigma_+ \leq -3 - 2 \frac{p-2}{-\lambda + 2} \leq -3 + 2 \frac{p-2}{p} \leq -1, $$
contradicting $\sigma_+ \in (-1,0)$.

Thus equation \eqref{sym-eigenval-two-leg-inf-1} for symmetric eigenvectors 
has exactly one solution in $(p,p+2)$ and no other solution
satisfying 
$\lambda > 4$.

\end{proof}

\begin{remark}
The eigenvalue corresponding to the antisymmetric eigenvector
is larger than the one for the symmetric eigenvector.
In section 4.4 we derive the asymptotic estimates (\ref{las},\ref{lsym}) 
of these eigenvalues for large $p$. 
\end{remark}

\begin{proposition}
Let $L$ be the Laplacian of 
$ C_{\infty} \oplus K_p \oplus C_{\infty}$, $p \geq 6$.
Then $ \sigma_e({L}) = [0,4]$.
\end{proposition}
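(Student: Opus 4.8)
The plan is to follow the same strategy as in the proof of Proposition~\ref{especinf} for the single-chain graph $K_p \oplus C_\infty$, exploiting the stability of the essential spectrum under finite rank perturbations. First I would observe that, since $p$ is finite, the clique $K_p$ together with its two junction edges is supported on finitely many vertices; hence the Laplacian $L$ of $C_\infty \oplus K_p \oplus C_\infty$ differs from the Laplacian $L_0$ of a reference graph---obtained by deleting the clique edges, so that the infinite part is a disjoint union of two half-infinite chains on $\mathbb{Z}^+$ with nearest-neighbor connections---by an operator of finite rank. Since the essential spectrum is invariant under such perturbations (see e.g. \cite{Kato76}), it suffices to compute $\sigma_e(L_0)$. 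Each half-infinite chain has essential spectrum $[0,4]$ and the finite collection of clique vertices contributes only finitely many eigenvalues; as $\sigma_e$ of a direct sum is the union of the essential spectra, we get $\sigma_e(L_0) = [0,4]$, whence $\sigma_e(L) = [0,4]$.

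To make the identification self-contained and to confirm that no essential spectrum hides outside $[0,4]$, I would reuse the transfer-matrix dichotomy of Section 3. For $\lambda > 4$ the dynamics of $M_\lambda$ is hyperbolic ($\sigma_+ \in (-1,0)$, $|\sigma_-| > 1$), so any bounded solution of $Lv = \lambda v$ must decay exponentially at both ends; this is the mechanism behind the isolated edge eigenvalues of Proposition~\ref{edge-states-infinite2}, and it shows $(4,\infty) \subset \rho(L) \cup \sigma_p(L)$, in particular $(4,\infty) \cap \sigma_e(L) = \emptyset$. For $\lambda \in (0,4)$ the discriminant $\Delta = (2-\lambda)^2 - 4$ is negative, the dynamics is elliptic, and $M_\lambda$ has eigenvalues on the unit circle; one then constructs bounded, non-decaying generalized eigenvectors $v \in l^\infty$, $v \notin l^2$, solving $Lv = \lambda v$. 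By the criterion recalled in Section 2 (see \cite{KP13}, ch.~1) such $\lambda$ belong to $\sigma_e(L)$, so $(0,4) \subset \sigma_e(L)$. Finally, since $\rho(L)$ is open and $\sigma_e(L)$ is closed, the inclusion extends to the endpoints, giving $[0,4] \subset \sigma_e(L)$ and hence the claimed equality.

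The steps are routine given the machinery already assembled, and I expect the only point requiring genuine care to be the explicit construction of the oscillatory generalized eigenvectors for $\lambda \in (0,4)$ in the two-chain geometry: one must match the elliptic solutions on the two chains to the constant-on-the-clique ansatz \eqref{def-c0-two-leg-inf}--\eqref{third-cond-two-leg-inf} while keeping $v$ bounded and nonzero, which is slightly more delicate than in the single-chain case because there are now two free boundary phases and the matching conditions at both junction nodes must hold simultaneously. A clean way around this is to rely solely on the finite-rank-perturbation argument of the first paragraph, which pins down $\sigma_e(L)$ with no case analysis in $\lambda$ at all; the transfer-matrix discussion then serves only as an independent confirmation of the two inclusions.
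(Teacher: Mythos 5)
Your proposal is correct and takes essentially the same route as the paper: the paper disposes of this proposition in one line by invoking the argument of Proposition~\ref{especinf}, namely invariance of the essential spectrum under finite-rank perturbations (with the reference operator now a direct sum of two half-infinite chains and finitely many isolated vertices), supplemented by the transfer-matrix dichotomy ($\sigma_\pm$ hyperbolic for $\lambda>4$, elliptic with $l^\infty\setminus l^2$ generalized eigenvectors for $\lambda\in(0,4)$) and the closedness of $\sigma_e(L)$ at the endpoints. Your first paragraph is exactly that argument adapted to the two-chain geometry, and you rightly note that it alone pins down $\sigma_e(L)=[0,4]$, with the transfer-matrix discussion serving only as confirmation.
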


The proof uses the argument of Proposition \ref{especinf}.

As for $ K_p \oplus C_\infty $, we have the following theorem
for the spectrum of $ C_{\infty} \oplus K_p \oplus C_{\infty}$, $p \geq 6$.
\begin{proposition}
Let $L$ be the Laplacian of the graph $ C_{\infty} \cup K_p \cup C_{\infty}$, $p \geq 6$.
Then the spectrum of $L $ is a union of the disjoint sets $[0,4]$ (the essential
spectrum of $L$), and $\{\lambda_1, \lambda_2,  p \}$, with 
$ \lambda_1 \geq \lambda_2 \in (p,p+2)$
(the point spectrum of $L$).
\end{proposition}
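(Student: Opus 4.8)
The plan is to combine the three facts already established — the essential spectrum, the clique eigenvalue, and the two edge eigenvalues — and then fill the single remaining gap, the absence of point spectrum inside $[0,4]$. First I would note that $L$ is bounded, Hermitian and nonnegative, so $\sigma(L)\subset[0,\infty)$, and that the preceding proposition gives $\sigma_e(L)=[0,4]$. By the Fredholm characterization of $\sigma_e$ adopted in the paper, $\sigma(L)\setminus\sigma_e(L)$ consists of isolated eigenvalues of finite multiplicity; hence every point of $\sigma(L)\cap(4,\infty)$ is an eigenvalue, and it suffices to (i) determine all $\lambda>4$ admitting an $l^2$ eigenvector and (ii) show that no $\lambda\in[0,4]$ is an eigenvalue.

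For (i) I would exploit that $E_K$ from Proposition \ref{infinite-clique-twoleg} is contained in the eigenspace of the isolated eigenvalue $p$, so by self-adjointness $E_K^{\perp}$ is $L$-invariant and any eigenvector decomposes as an eigenvector in $E_K$ plus one in $E_K^{\perp}$ of the same eigenvalue. The $E_K$ component contributes only $\lambda=p$; the $E_K^{\perp}$ component is controlled by Proposition \ref{edge-states-infinite2}, which states that for $\lambda>4$ the sole eigenvectors in $E_K^{\perp}$ are the symmetric and antisymmetric edge eigenvectors, with eigenvalues $\lambda_2<\lambda_1$ in $(p,p+2)$. Since these lie strictly above $p$, no $E_K^{\perp}$ eigenvector has eigenvalue $p$, so the $p$-eigenspace is exactly $E_K$ (dimension $p-3$) and $\sigma(L)\cap(4,\infty)=\{p,\lambda_1,\lambda_2\}$, three pairwise distinct numbers satisfying $p<\lambda_2<\lambda_1<p+2$ because $p\geq6$.

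The substantive part is (ii). Here I would work directly with an arbitrary $l^2$ eigenvector $v$ of eigenvalue $\lambda\in[0,4]$, using the transfer matrix \eqref{def-transf-matrix} on each chain. For $\lambda\in(0,4)$ the discriminant $(2-\lambda)^2-4$ is negative, so $\sigma_\pm$ lie on the unit circle and the chain iterate $z_n=a\sigma_+^n v^++b\sigma_-^n v^-$ is bounded but non-decaying; square-summability of the two infinite tails then forces $a=b=0$, so $v$ vanishes on both chains. The two degenerate endpoints $\lambda=0,4$, where $M_\lambda$ has a double eigenvalue and the iterate grows at most linearly, are excluded the same way. The chain-boundary equations at the junction nodes then give $v_0=v_{-p+1}=0$, so $v$ is supported on the $p-2$ interior clique vertices; the eigenvalue equation at an interior vertex reads $(p-\lambda)v_k=\sum_{j=-p+2}^{-1}v_j$ independently of $k$, forcing all interior values equal to a constant $C$, and the junction equation at node $0$ then reduces to $\sum_{j=-p+1}^{-1}v_j=(p-2)C=0$, so $C=0$ and $v\equiv0$. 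This final appeal to the junction equation is what rules out the borderline value $\lambda=2$, at which the interior equations alone impose no constraint, and it is the main obstacle of the argument.

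Collecting the pieces, $\sigma(L)\cap[0,4]=[0,4]=\sigma_e(L)$ contains no eigenvalue, whereas $\sigma(L)\cap(4,\infty)=\sigma_p(L)=\{p,\lambda_1,\lambda_2\}$. Therefore $\sigma(L)=[0,4]\cup\{\lambda_1,\lambda_2,p\}$ is a disjoint union, with $[0,4]$ the essential spectrum and $\{\lambda_1,\lambda_2,p\}$ the point spectrum, $p<\lambda_2<\lambda_1<p+2$, as claimed. The whole scheme mirrors the one-chain Proposition \ref{spectrum-infinite}, the two-chain case differing only through the second junction and the resulting pair of edge eigenvalues.
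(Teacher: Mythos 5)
Your proof is correct, and it is substantially more complete than what the paper provides: the paper states this proposition with no proof at all, treating it as an immediate consequence of Proposition \ref{infinite-clique-twoleg} (the $p-3$ clique eigenvectors), Proposition \ref{edge-states-infinite2} (the symmetric and antisymmetric edge eigenvalues in $(p,p+2)$), and the essential-spectrum result proved by finite-rank perturbation as in Proposition \ref{especinf}. Your write-up follows that same outline but supplies two steps the paper leaves implicit. First, you justify why those propositions exhaust $\sigma(L)\cap(4,\infty)$: since $L$ is self-adjoint and $\sigma_e(L)=[0,4]$, every spectral point above $4$ is an eigenvalue ($L-\lambda I$ is Fredholm of index zero and non-invertible, hence has nontrivial kernel), and the $L$-invariance of $E_K$ and $E_K^{\perp}$ reduces any such eigenvector to the dichotomy ``clique eigenvector with $\lambda=p$'' versus ``eigenvector in $E_K^{\perp}$ covered by Proposition \ref{edge-states-infinite2}''; this also shows the $p$-eigenspace is exactly $E_K$. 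Second---and this is the genuinely new content---you rule out point spectrum embedded in $[0,4]$, which the paper never addresses for $p\geq 6$ (its remarks on embedded eigenvalues concern only $p=3,4$): oscillatory or linearly growing transfer-matrix solutions are not square-summable, so an $l^2$ eigenvector must vanish on both chains, and the clique equations then annihilate it, with the junction equation at node $0$ needed precisely at $\lambda=2$, where a vector constant on the interior clique vertices satisfies all the interior equations. Without this step the asserted disjointness of $[0,4]$ and the point spectrum, i.e.\ the claim $\sigma_p(L)=\{\lambda_1,\lambda_2,p\}$, would not follow. One minor overstatement: you assert strict inequality $\lambda_2<\lambda_1$ (``three pairwise distinct numbers''), but distinctness of the symmetric and antisymmetric eigenvalues is proved neither by you nor by Proposition \ref{edge-states-infinite2}; this is harmless, since the statement only requires $\lambda_1\geq\lambda_2$.
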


\begin{remark}
  By Proposition \ref{infinite-clique-twoleg}, 
  for $p=4$ we have three clique eigenvalues 
 $\lambda = 4 \in \sigma_e(L)$.  
  \end{remark}

\subsection{Edge eigenvectors for $ C_{q_1} \oplus K_p \oplus C_{q_2} $}

We have the following proposition showing the existence of two edge
eigenvectors for the graph $ C_{q_1} \oplus K_p \oplus C_{q_2} $.

\begin{proposition}
\label{2-leg-edge-states-finite}
    Let $L$ be the Laplacian of the graph $ C_{q_1} \oplus K_p \oplus C_{q_2}$,
    $p \geq 6$, $q_1$, $q_2 \geq 3$.
Then $\lambda > 4 $
is an eigenvalue of $L$ with corresponding eigenvector $v \in E_K^{\perp}$ 
if and only if 
$D_{q_1,p,q_2}(\lambda) = 0$, where 
\begin{equation}
\label{two-leg-edge-eigenval-eq-finite}
D_{q_1,p,q_2}(\lambda) = (p-2)(2 - Q_{q_1} - Q_{q_2}) - (-\lambda + 2)(Q_{q_1} Q_{q_2} - 1), 
\end{equation}
where
\begin{equation}
\label{constants-Q-G}
Q_{q} = \sigma_+ \frac{1 + \sigma_+^{2 q - 3}}{ 1 + \sigma_+^{2 q - 1}} - (-\lambda + p), 
\end{equation}
and $\sigma_+ = \sigma_+(\lambda)$ is as in \eqref{eval-transf-matrix}.
$D_{q_1,p,q_2}(\lambda)  = 0  $ has exactly two solutions in $(p,p+2]$ and
no solutions in $(4, p]\cup (p+2, +\infty)$.
In the case $q_1 = q_2 = q$, we have the factorization
$ D_{q_1,p,q_2}(\lambda) = - F_{A,q}(\lambda) F_{S,q}(\lambda)$ with 
\begin{equation}
\label{two-leg-edge-finite-sym}
F_{S,q}(\lambda) = (-\lambda + 2)(Q_q + 1) + 2(p-2), \quad F_{A,q}(\lambda)  = 1 - Q_{q}.
\end{equation}
Solutions of $ F_{S,q}(\lambda) = 0$, $ F_{A,q}(\lambda) = 0$ 
correspond to symmetric and antisymmetric eigenvectors of $L$ respectively.
Furthermore, both equations $F_{A,q} (\lambda) = 0$, $F_{S,q} (\lambda)= 0$, $q \geq 3$, have 
exactly one solution in $(p,p+2)$ and no solutions in $(4, p] \cup [p+2,+\infty)$.
\end{proposition}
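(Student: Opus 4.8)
The plan is to follow the three-step pattern used for Propositions \ref{edge-states-infinite2} and \ref{edge-states-finite}: reduce the eigenvalue problem in $E_K^{\perp}$ to a small linear system at the junction nodes, read off $D_{q_1,p,q_2}$ as a determinant, and then locate its roots. First I would impose orthogonality of $v$ to the $p-3$ clique eigenvectors of Proposition \ref{clique-evect-twoleg-inf}, which as in \eqref{def-c0-two-leg-inf} forces $v_{-p+2}=\cdots=v_{-1}=C_0$. Writing $C_{\pm1}=v_0,v_{-p+1}$ and $C_{\pm2}=v_1,v_{-p}$, the equation $Lv=\lambda v$ at the interior clique nodes gives $C_1+C_{-1}=(-\lambda+2)C_0$, and at the two junction nodes $0$ and $-p+1$ it gives the relations \eqref{second-cond-two-leg-inf}, \eqref{third-cond-two-leg-inf}. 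The new ingredient relative to the infinite case is that each leg is finite, so I would invoke the transfer-matrix computation of Proposition \ref{edge-states-finite} on each leg separately: the boundary condition at the free end of the chain of length $q_i$ forces $C_2=R_{q_2}C_1$ and $C_{-2}=R_{q_1}C_{-1}$, where $R_q=\sigma_+\frac{1+\sigma_+^{2q-3}}{1+\sigma_+^{2q-1}}$ and hence $Q_{q}=R_q-(-\lambda+p)$ as in \eqref{constants-Q-G}. Substituting the two chain relations eliminates $C_{\pm2}$ and leaves a homogeneous $3\times3$ system for $(C_{-1},C_0,C_1)$; a nontrivial solution exists iff its determinant vanishes, and a direct expansion identifies this determinant with $-D_{q_1,p,q_2}(\lambda)$, giving the asserted equivalence.

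For the symmetric case $q_1=q_2=q$ I would exploit the $C_{-1}\leftrightarrow C_1$ symmetry of the $3\times3$ system. Splitting into $C_{-1}=C_1$ and $C_{-1}=-C_1$: the antisymmetric branch forces $C_0=0$ and reduces to $Q_q=1$, i.e. $F_{A,q}=0$; the symmetric branch combines $C_1=\tfrac12(-\lambda+2)C_0$ with the junction relation to give $(-\lambda+2)(Q_q+1)+2(p-2)=0$, i.e. $F_{S,q}=0$. Expanding the determinant in this case yields the factorization $D_{q_1,p,q_2}=\pm F_{A,q}F_{S,q}$ of \eqref{two-leg-edge-finite-sym}, with the two factors attached to the two symmetry classes.

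The root count rests on the elementary fact that for $\lambda>4$ one has $\sigma_+\in(-1,0)$, and that the finite-chain factor $\frac{1+\sigma_+^{2q-3}}{1+\sigma_+^{2q-1}}$ lies in $(0,1)$ for every $q\geq3$ (the exponents $2q-3,2q-1$ are odd, so both terms $1+\sigma_+^{2q-3}$, $1+\sigma_+^{2q-1}$ are positive and the numerator is the smaller), whence $R_q\in(-1,0)$ and $Q_q=R_q+\lambda-p$. With this I would compute the endpoint values $D_{q_1,p,q_2}(p)=(p-2)(1-R_{q_1})(1-R_{q_2})>0$, $F_{A,q}(p)>0>F_{A,q}(p+2)$, $F_{S,q}(p)>0>F_{S,q}(p+2)$, so that $F_{A,q}$ and $F_{S,q}$ each change sign on $(p,p+2)$. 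To exclude roots below $p$ I would show $D_{q_1,p,q_2}>0$ on all of $(4,p]$: there both $Q_{q_i}<0$, so $(p-2)(2-Q_{q_1}-Q_{q_2})$ exceeds $2(p-2)$, while $(\lambda-2)(Q_{q_1}Q_{q_2}-1)$ is bounded below by $-(\lambda-2)>-(p-2)$, forcing $D_{q_1,p,q_2}>(p-2)>0$. Finally, to pin the number of roots in $(p,p+2]$ to exactly two I would invoke the Courant--Weyl estimate Lemma \ref{weyl-two-leg}, which gives $\lambda_1(L),\lambda_2(L)\in[p,p+2]$ with all remaining eigenvalues equal to $p$ or in $[0,4)$; since $D_{q_1,p,q_2}(p)\neq0$ the $p$-eigenspace is exactly $E_K$ (both subspaces are $L$-invariant by symmetry of $L$), so $\lambda_1,\lambda_2>p$, they lie in $E_K^{\perp}$, and are therefore precisely the roots of $D_{q_1,p,q_2}$ (no root can exceed $p+2$, as $L$ has no eigenvalue there). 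In the symmetric case the two sign changes then force exactly one root of $F_{A,q}$ and one of $F_{S,q}$, both in the open interval $(p,p+2)$.

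The main obstacle is the regime $q=3$ (and, more generally, the asymmetric case with $q_i\geq3$), which falls outside the hypotheses $q_i\geq4$ of Lemma \ref{weyl-two-leg} and so cannot rely on the Weyl count. For this I would instead prove monotonicity directly, as in Proposition \ref{edge-states-infinite2}: passing to $x=-\lambda+2\in(-p,-p+2)$ and using the bound $\tilde\sigma'(x)<-\tfrac12$ established there. The delicate new point is that $F_{A,q}'$, $F_{S,q}'$ now involve the derivative of the rational factor $\frac{1+\sigma_+^{2q-3}}{1+\sigma_+^{2q-1}}$, not merely $\sigma_+'$, so I would need a lower bound $R_q'\geq0$ (equivalently, monotonicity of $R_q$ in $\lambda$, running from $-\frac{2q-3}{2q-1}$ at $\lambda=4$ to $0$ as $\lambda\to\infty$). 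Establishing this derivative bound uniformly in $q\geq3$, using only $\sigma_+\in(-1,0)$ and the sign pattern of the odd powers, is the technical crux; once it is in hand the endpoint signs give uniqueness on $(p,p+2)$ and exclude roots on $(4,p]\cup[p+2,\infty)$ without appeal to Weyl.
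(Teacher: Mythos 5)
Your proposal follows essentially the same route as the paper's appendix proof: orthogonality to $E_K$ forces $v_{-p+2}=\cdots=v_{-1}=C_0$, the transfer-matrix argument of Proposition \ref{edge-states-finite} applied to each leg eliminates $C_{\pm 2}$ through $C_2=R_{q_2}C_1$, $C_{-2}=R_{q_1}C_{-1}$, the determinant of the resulting homogeneous $3\times 3$ system is identified (up to sign) with $D_{q_1,p,q_2}$, roots on $(4,p]$ are excluded by sign estimates on the two sides of $D_{q_1,p,q_2}=0$, the count of exactly two roots in $(p,p+2]$ comes from the Courant--Weyl bounds of Lemma \ref{weyl-two-leg}, and the case $q_1=q_2=q$ is split into symmetric/antisymmetric branches with endpoint sign changes of $F_{S,q}$, $F_{A,q}$. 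Your endpoint computations are correct ($D_{q_1,p,q_2}(p)=(p-2)(1-R_{q_1})(1-R_{q_2})>0$ checks out), and your $(4,p]$ exclusion, giving $D_{q_1,p,q_2}>p-2$ there, is in fact a cleaner version of the paper's two-case argument; the sign discrepancies (your $F_{A,q}(p)>0$ versus the paper's $F_{A,q}(p)<0$, and $D=\pm F_{A,q}F_{S,q}$) trace back to the paper's own inconsistency between $F_{A,q}=1-Q_q$ in the statement and $Q_q-1$ in the appendix, so your hedge on the sign is appropriate.

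On what you call the main obstacle: you are right that the paper cites Lemma \ref{weyl-two-leg}, stated for $q_1,q_2\geq 4$, inside a proposition that allows $q_i\geq 3$, and the paper never addresses this. But your proposed repair --- establishing $R_q'\geq 0$ uniformly in $q$ and rerunning the uniqueness argument by monotonicity, which you correctly flag as unproven --- is far heavier than necessary. The only consequences of the Weyl argument used in the root count are $\lambda_1(L),\lambda_2(L)\in[p,p+2]$ and $\lambda_j(L)\leq p$ for $j\geq 3$, and these follow from \eqref{positive-weyl} with the same decomposition $L=A+B$ for \emph{any} $q_1,q_2\geq 2$: the clique block gives $\lambda_1(A)=\lambda_2(A)=p$ (multiplicity $p-1\geq 2$, and all chain-block eigenvalues are $\leq 4<p$), while $\lambda_1(B)=\lambda_2(B)=2$, $\lambda_j(B)=0$ for $j\geq 3$, so $\lambda_1(A+B)\leq p+2$, $\lambda_2(A+B)\leq p+2$, $\lambda_j(A+B)\leq\lambda_1(A)+\lambda_3(B)=p$ for $j\geq3$, and $\lambda_1(A+B),\lambda_2(A+B)\geq p$ from the lower bounds. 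The restriction $q_i\geq 4$ in the lemma is needed only for its assertions about the low part of the spectrum ($\lambda_j\in[0,4)$, etc.), which play no role here. With that observation your plan closes completely, without the unfinished derivative bound on $R_q$.
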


The proof combines the arguments of Propositions 
\ref{edge-states-infinite2},
\ref{edge-states-finite}
and is 
given in the appendix

We have the following theorem for the whole spectrum of the
graph $ C_{q_1} \oplus K_p \oplus C_{q_2}$.

\begin{proposition}
\label{spectrum-finite2}
Let $L$ be the Laplacian of the graph $ C_{q_1} \oplus K_p \oplus C_{q_2}$, $p \geq 6$, $q_1$, $q_2 \geq 2$. 
Then the spectrum of $L $ consists of the eigenvalue $p$, of multiplicity $p - 3 $, 
two eigenvalues $\lambda_1$, $\lambda_2 \in (p,p+2]$. All other eigenvalues 
are in the interval $[0,4]$ 
\end{proposition}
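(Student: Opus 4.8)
The plan is to combine the three structural facts already in hand---the clique eigenvectors of Proposition \ref{clique-evect-twoleg-inf}, the edge eigenvalue analysis of Proposition \ref{2-leg-edge-states-finite}, and the nonnegativity of $L$---into a dimension count that accounts for every eigenvalue. Write $n = p + q_1 + q_2 - 2$ for the number of vertices. By Proposition \ref{clique-evect-twoleg-inf} the subspace $E_K$ has dimension $p-3$ and $L|_{E_K} = p\,I$, so $E_K \subseteq \ker(L - pI)$. Since $L$ is symmetric, $E_K^{\perp}$ is $L$-invariant, giving the orthogonal decomposition $\mathbb{R}^n = E_K \oplus E_K^{\perp}$ with $\dim E_K^{\perp} = q_1 + q_2 + 1$. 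The spectrum of $L$ is therefore the union, with multiplicity, of the eigenvalue $p$ repeated $p-3$ times and the eigenvalues of $L|_{E_K^{\perp}}$, and the whole proof reduces to locating the latter.

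First I would isolate the eigenvalues of $L|_{E_K^{\perp}}$ that exceed $4$. By Proposition \ref{2-leg-edge-states-finite}, $\lambda > 4$ is such an eigenvalue if and only if $D_{q_1,p,q_2}(\lambda) = 0$, and that equation has exactly two roots, both in $(p,p+2]$, and none in $(4,p] \cup (p+2,+\infty)$. Hence $L|_{E_K^{\perp}}$ contributes exactly the two edge eigenvalues $\lambda_1,\lambda_2 \in (p,p+2]$ above $4$, and---this is the point that the Courant--Weyl bounds of Lemma \ref{weyl-two-leg} alone cannot supply, since they only confine $\lambda_p,\lambda_{p+1}$ to $(0,\min\{\lambda_1(L_{C_q})+2,p\})$---\emph{no} eigenvalue of $L|_{E_K^{\perp}}$ lies in $(4,p]$.

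Next I would pin down the rest. Combining the previous step with $L|_{E_K} = p\,I$, the only eigenvalues of $L$ strictly above $4$ are $p$ (with multiplicity $p-3$) and $\lambda_1,\lambda_2$. Every other eigenvalue is thus $\leq 4$, and is $\geq 0$ by the nonnegativity of $L$, hence lies in $[0,4]$; there are exactly $q_1+q_2-1$ of them, so the total is $(p-3)+2+(q_1+q_2-1)=n$, as required. It remains to confirm that $p$ has multiplicity exactly $p-3$ and not more: an extra eigenvector for $p$ would lie in $E_K^{\perp}$ and, since $p>4$ for $p\geq 6$, would force $p$ to be a root of $D_{q_1,p,q_2}$; but both roots lie strictly above $p$, so $p$ is not an eigenvalue of $L|_{E_K^{\perp}}$, and its multiplicity is exactly $p-3$. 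One may cite Lemma \ref{weyl-two-leg} as an independent confirmation that $\lambda_1,\lambda_2\in[p,p+2]$ and that the tail is confined, sharpened here to $[0,4]$ by the dynamical argument.

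Once Propositions \ref{clique-evect-twoleg-inf} and \ref{2-leg-edge-states-finite} are granted, this proof is essentially bookkeeping, so the genuine difficulty is not internal to it but lies in the completeness of its inputs---specifically in the ``only if'' direction of Proposition \ref{2-leg-edge-states-finite}, which guarantees that no large eigenvalue of $L|_{E_K^{\perp}}$ escapes the roots of $D_{q_1,p,q_2}$, together with the claim that $D_{q_1,p,q_2}$ has no roots in $(4,p]$. The remaining loose end to check is the boundary case $q_1=2$ or $q_2=2$, which is permitted here ($q_i\geq 2$) but excluded from Proposition \ref{2-leg-edge-states-finite} ($q_i\geq 3$); I would dispose of it either by direct verification for a single pendant vertex or by noting that the transfer-matrix compatibility conditions degenerate without destroying the two-edge-eigenvalue structure.
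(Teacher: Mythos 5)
The paper states this proposition without proof, and your synthesis---decompose $\R^n = E_K \oplus E_K^{\perp}$, use Proposition \ref{clique-evect-twoleg-inf} for the clique block, the ``only if'' direction of Proposition \ref{2-leg-edge-states-finite} to control everything above $4$ in $E_K^{\perp}$, and nonnegativity for the rest---is exactly the intended combination of the paper's ingredients, so in substance your route is the paper's route. Two points, however, need tightening. First, a multiplicity issue: Proposition \ref{2-leg-edge-states-finite} identifies which real numbers $\lambda > 4$ are eigenvalues with eigenvector in $E_K^{\perp}$ (it counts roots of the scalar function $D_{q_1,p,q_2}$), but it says nothing about the \emph{multiplicity} of each such $\lambda$ as an eigenvalue of $L|_{E_K^{\perp}}$. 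Your count ``there are exactly $q_1+q_2-1$ of them'' silently assumes $\lambda_1$, $\lambda_2$ are simple; if one of them had a two-dimensional eigenspace in $E_K^{\perp}$, the tail count would drop and the argument as written would fail. The fix is precisely Lemma \ref{weyl-two-leg}, which you relegate to ``independent confirmation'': since $\lambda_j(L) = p$ for $j \in \{3,\ldots,p-1\}$ and $\lambda_j(L) < p$ for $j \geq p$, at most two eigenvalues of $L$, counted with multiplicity, exceed $p$; combined with the existence of two distinct roots of $D_{q_1,p,q_2}$ in $(p,p+2]$, each of which is an eigenvalue, this forces both to be simple. So the Courant--Weyl lemma is a genuinely needed input, not an optional cross-check (indeed the paper's own appendix proof of Proposition \ref{2-leg-edge-states-finite} already invokes it for the root count).

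Second, the hypothesis mismatch you flag is real but is a defect of the paper's statement rather than of your proof: the proposition allows $q_1, q_2 \geq 2$, while Proposition \ref{2-leg-edge-states-finite} requires $q_i \geq 3$ and Lemma \ref{weyl-two-leg} requires $q_i \geq 4$, so the cases $q_i \in \{2,3\}$ are not covered by the cited inputs and would require either a direct finite-dimensional verification (a pendant vertex, or a two-vertex chain) or a separate check that the transfer-matrix compatibility conditions and the Weyl bounds survive the degeneration. Your proposal correctly identifies this as the remaining loose end; as written it is deferred, not closed.
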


\subsection{Numerical calculations and asymptotic estimates for large $p$}  

\begin{figure} [H]
\centerline{ \epsfig{file=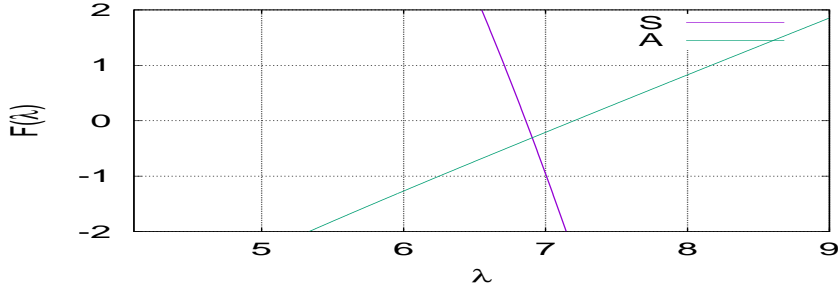,height=4 cm,width=12 cm,angle=0}}
        \caption{Plot of the functions $F_S(\lambda)$ and $F_A(\lambda)$
	for the graph $C_{\infty} \oplus K_6 \oplus C_{\infty}$.}
\label{rbf2}
\end{figure}

We now consider the asymptotic behavior of $\lambda$
in the antisymmetric and symmetric cases.
For that we will use the asymptotic estimates of $\sigma_+$ (\ref{as2})
$$\sigma_+ \approx {1 \over 2 - \lambda}$$

First, assume an antisymmetric solution so that $C_0=0$
Then, $$F_A(\lambda) =  \sigma_+  - (-\lambda + p +1)a=0$$ 
From this equation, we get
$${1 \over 2 - \lambda} = -\lambda + p +1$$
which yields the following second degree equation for $\lambda$
$$\lambda^2 - \lambda (p+3) + 2(p+1) -1 =0 . $$
The interesting solution is
$$\lambda = {p+3 \over 2 } + {1 \over 2} \sqrt{(p+3)^2-4(2p+1)}$$
Expanding the square root, we obtain the final estimate
\be\label{las}\lambda = p+1 + {1 \over p}, \ee
which yields $\lambda=7.16$ for $p=6$ close to the numerical value.
The quantity $\sigma_+$ is
$$\sigma_+ = {p \over p(1-p) -1} .$$
For $p=6$, we have $\sigma_+=-0.19$.

For the symmetric case, we have
$$F_S(\lambda) = (-\lambda + 2) \sigma_+ - (-\lambda + p -1)(-\lambda + 2) + 2(p-2) =0$$
Substituting equation (\ref{as2}) in $F_S(\lambda) =0$ yields
the following second degree equation for $\lambda$
$$\lambda^2 - \lambda (p+1) + 2 =0 . $$
The interesting root is
$$\lambda = {p+1 \over 2 } + {1 \over 2} \sqrt{(p+1)^2-8} .$$
Expanding the square root as above yields the estimate
\be\label{lsym}\lambda = p+1 - {2 \over p+1} . \ee
For $p=6$, we obtain $\lambda =6.71$.

\section{Graphs of complete graphs: results and conjectures}

To conclude the article we present two conjectures on the
spectrum of graphs composed of complete graphs $K_p$ connected
by chains.  

We introduce a graph of complete graphs with the following.
\begin{definition}
A graph of complete graphs is the set ${\cal G}= \{ \tilde V, \tilde E \}$
where $\tilde V = \{K_{p_1}, K_{p_2}, \dots, K_{p_N} \}$ where $K_{p_i}$
is a $p_i$ complete graph. The edges are chains connecting the 
vertices $K_{p_i}$. 
\end{definition}

An example of such a graph of complete graphs is shown in Fig. \ref{ggraph}.
\begin{figure}[H]
\centerline{\epsfig{file=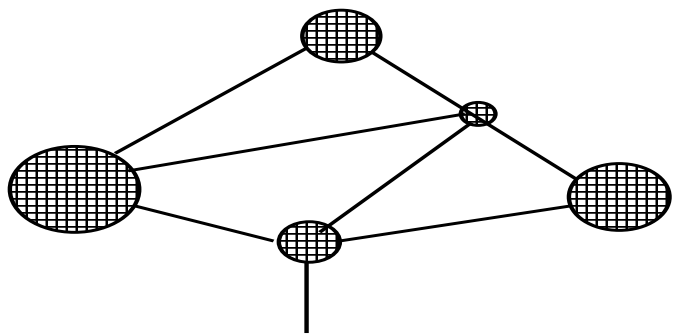,height=5cm,width=10cm,angle=0}}
\caption{A graph of 5 complete graphs connected by chains}
\label{ggraph}
\end{figure}

Assume $p_i \ge 5$ and the length of the edges (chains) greater than 3
and denote $d_i$ the degree of $K_{p_i}$ in ${\cal G}$.
We have the following result.

\begin{proposition} For a graph of complete graphs ${\cal G}$ there are 
at most $p_i -d_i -2 $ clique eigenvalues $p_i$ for $i \in \{1, \dots, N\}$.
There is exactly $p_i -d_i -2 $ clique eigenvalues if $K_{p_i}$ is connected
to edges at different vertices. 
\end{proposition}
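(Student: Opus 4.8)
The plan is to reduce the statement to a purely local computation at each clique $K_{p_i}$, paralleling Propositions~\ref{internal-states-finite2} and \ref{clique-evect-twoleg-inf}. First I would determine all eigenvectors $v$ of the full Laplacian with $Lv=p_i v$ that are supported on $K_{p_i}$. Evaluating $Lv=p_i v$ at the three relevant vertex types --- an interior clique vertex, a \emph{connecting} vertex $c$ (a vertex of $K_{p_i}$ carrying a chain), and the chain vertex $w$ adjacent to $c$ --- and using that $v$ vanishes off $K_{p_i}$, one finds precisely two families of constraints: the zero-sum relation $\sum_{j\in K_{p_i}}v_j=0$ forced by the interior vertices, and $v_c=0$ at every connecting vertex $c$, forced jointly by the relation $(Lv)_w=-v_c=0$ at the adjacent chain vertex and by the equation at $c$ itself. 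This is the verbatim generalization of \eqref{case-in-in}--\eqref{case-on-site-2}, and the mechanism specializes to the vanishing patterns used in Propositions~\ref{internal-states-finite2} and \ref{clique-evect-twoleg-inf}.

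Second, I would count the solution space of these conditions inside $\mathbb{R}^{p_i}$. The zero-sum relation picks out the degenerate $p_i$-eigenspace of the isolated clique, of dimension $p_i-1$ by the Property following the clique definition. Each condition $v\mapsto v_c$ then removes one further dimension provided the coordinate functionals at the connecting vertices are linearly independent of one another and of the sum functional. When $K_{p_i}$ meets its chains at distinct vertices these functionals are manifestly independent (uncoupled clique vertices remain), which pins down the dimension recorded in the statement; when two chains share a vertex the corresponding functionals coincide, fewer constraints survive, and only the upper ``at most'' bound is guaranteed. This accounts for the dichotomy between the ``at most'' and ``exactly'' clauses.

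The step I expect to be the main obstacle is the matching upper bound, namely that the $p_i$-eigenspace of the \emph{full} $L$ consists only of these localized vectors, so that no eigenvector with $\lambda=p_i$ leaks into a chain. Here I would invoke the transfer-matrix machinery of \eqref{def-transf-matrix}--\eqref{eval-transf-matrix}: at $\lambda=p_i>4$ the chain dynamics is hyperbolic with $\sigma_+\in(-1,0)$, so any chain component is governed by the stable and unstable eigendirections of $M_\lambda$, and the compatibility (``edge'') equations assembled along each chain --- exactly as in Propositions~\ref{edge-states-finite} and \ref{2-leg-edge-states-finite} --- were shown to have all their roots strictly inside $(p,p+2)$, never at the endpoint $\lambda=p_i$. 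Hence no chain-supported mode can occur at eigenvalue exactly $p_i$, which forces every $p_i$-eigenvector to vanish on all chains and thus to lie in the localized space already counted.

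The one genuinely new bookkeeping issue, absent from the single-clique cases, arises when several cliques share a common size: a chain joining two cliques $K_{p_i}$, $K_{p_j}$ with $p_i=p_j$ might a priori support a resonant mode at $\lambda=p_i$ spanning both cliques. I would dispose of this by the same device, writing the compatibility relation along such an edge in the factored symmetric/antisymmetric form of \eqref{two-leg-edge-finite-sym} and checking that its roots again lie strictly above $p_i$, so that equal cliques do not share a mode at exactly $\lambda=p_i$. Assembling the exact lower bound for distinct attachments (Step~2) with this upper bound yields the ``exactly'' assertion, while the coincident-vertex degeneracy yields the general ``at most'' bound; the hypotheses $p_i\ge 5$ and chain length greater than $3$ are precisely what keep the edge equations valid and their roots confined to $(p_i,p_i+2)$.
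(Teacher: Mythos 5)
Your first two steps do reproduce the paper's intended argument: the paper's entire proof is the one-sentence remark that the constructions of Propositions \ref{internal-states-finite2} and \ref{clique-evect-twoleg-inf} generalize, i.e.\ pad with zeros the clique vectors that vanish at the attachment vertices and have zero sum, which is exactly your local computation. The problem is that your own count does not give the number in the statement, and you assert that it does. A zero-sum space of dimension $p_i-1$ cut by $d_i$ independent coordinate functionals has dimension $p_i-d_i-1$; this is also the only number consistent with the results you cite ($d_i=1$ gives $p_i-2$ in Proposition \ref{internal-states-finite2}, $d_i=2$ gives $p_i-3$ in Proposition \ref{clique-evect-twoleg-inf}), whereas the proposition as printed claims $p_i-d_i-2$. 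Instead of flagging this off-by-one discrepancy (which sits in the statement itself), you write that your count ``pins down the dimension recorded in the statement''; it does not, and this asserted agreement is a genuine error. Your handling of the ``at most''/``exactly'' dichotomy is also backwards: if two chains attach at the same vertex of $K_{p_i}$, \emph{fewer} independent vanishing constraints survive, so the localized eigenspace gets \emph{larger} (dimension $p_i-k-1$ with $k<d_i$ the number of distinct attachment vertices). Coincident attachments therefore produce \emph{more} clique eigenvectors, not fewer; constraint counting yields a lower bound with equality iff the attachment vertices are distinct, and cannot ``guarantee the upper bound'' as you claim.

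Your third and fourth steps go beyond anything in the paper's proof, which says nothing about whether the localized vectors exhaust the $p_i$-eigenspace. The idea is reasonable in spirit (split an eigenvector along $E_K$ and $E_K^{\perp}$, use hyperbolicity of the chain dynamics and the location of the roots of the edge equations), but the specific appeals do not apply as stated: Propositions \ref{edge-states-finite} and \ref{2-leg-edge-states-finite} treat \emph{dangling} chains attached to a single clique, while a chain joining two cliques $K_{p_i}$, $K_{p_j}$ is a different boundary-value problem whose compatibility equation is derived nowhere, and the factorization \eqref{two-leg-edge-finite-sym} you invoke for the equal-size resonance was obtained under the symmetry of one clique carrying two equal dangling chains, not two cliques joined by one chain. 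So the no-leakage step remains a sketch resting on results proved for a different geometry. In fairness, the paper's one-line proof establishes only the padding construction and neither proves the ``at most'' clause nor notices the off-by-one; but a correct write-up on your part would have either corrected the count to $p_i-d_i-1$ or stated plainly that the proposition as written cannot follow from the construction you (and the paper) use.
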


The proof is an immediate generalization of the results on the clique
eigenvalues obtained in sections 3 and 4.

We also give the following conjecture.
\begin{proposition} For a graph of complete graphs ${\cal G}$ 
there are $d_i,~ i \in \{1, \dots, N\}$ edge eigenvectors 
with eigenvalues in $]p_i,p_i+2[$ .
\end{proposition}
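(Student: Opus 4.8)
The plan is to extend the transfer-matrix construction of Propositions \ref{edge-states-infinite}, \ref{edge-states-finite} and \ref{edge-states-infinite2} from a single clique to the whole graph, working clique by clique. Fix $\lambda>4$, so that on every chain the transfer matrix $M_\lambda$ is hyperbolic with decay rate $\sigma_+\in(-1,0)$, and seek an eigenvector $v$ orthogonal to all clique eigenvectors. On each $K_{p_i}$ this orthogonality forces $v$ to be constant, $v\equiv C_0^{(i)}$, on the $p_i-d_i$ vertices of $K_{p_i}$ that carry no chain (exactly as in \eqref{def-c1-one-leg-inf} and \eqref{def-c0-two-leg-inf}), while the $d_i$ junction vertices keep independent values. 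Each attached chain is then integrated out by $M_\lambda$: imposing $l^2$-decay for a semi-infinite chain, or the two-endpoint compatibility \eqref{last-value-one-leg-finite-1}--\eqref{last-value-one-compatibility} for a finite chain joining two cliques, expresses the adjacent chain value linearly in the junction values, through the constants $\sigma_+$ or $Q_q$ of \eqref{constants-Q-G}. What remains is $Lv=\lambda v$ at the clique vertices, a finite linear system in the $\{C_0^{(i)}\}$ and the junction values; $\lambda$ is an edge eigenvalue exactly when its determinant $D(\lambda)$ vanishes.

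First I would analyse the single-clique block in isolation. For $K_p$ carrying $d$ semi-infinite chains at distinct vertices with junction values $w_1,\dots,w_d$, the equation at a chainless vertex gives $(d-\lambda)C_0=\sum_k w_k$, while the equation at the $k$-th junction, after $v_{\mathrm{chain},1}=\sigma_+ w_k$, gives $\alpha\,w_k=(p-d)C_0+\sum_l w_l$ with $\alpha:=-\lambda+p+1-\sigma_+$. If $\alpha\neq0$ these force all $w_k$ equal, and eliminating $C_0$ produces the generalized symmetric equation $(-\lambda+p-d+1-\sigma_+)(-\lambda+d)=(p-d)d$, which reduces to \eqref{sym-eigenval-two-leg-inf-1} when $d=2$ and has exactly one root in $(p,p+2)$. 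If $\alpha=0$, that is $\sigma_+=-\lambda+p+1$ as in \eqref{antisym-eval-eq-1}, the two relations force $C_0=0$ and $\sum_k w_k=0$, leaving a $(d-1)$-dimensional eigenspace. Thus a clique with $d$ chains contributes one symmetric edge eigenvalue and a $(d-1)$-fold degenerate antisymmetric edge eigenvalue, $d$ in total; that both roots lie in $(p,p+2)$ and nowhere else for $\lambda>4$ follows verbatim from the monotonicity and sign estimates for $F_S$, $F_A$ in the proof of Proposition \ref{edge-states-infinite2}. This already proves the statement for semi-infinite pendant chains, where distinct cliques decouple completely.

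For finite connecting chains the reduced determinant $D(\lambda)$ couples all cliques, but every inter-clique entry carries a factor $\sigma_+^{q_e}$, exponentially small in the length $q_e$ of the connecting chain since $|\sigma_+|<1$. I would therefore write $D(\lambda)=\prod_i D_i(\lambda)+(\text{exponentially small})$, with $D_i$ the single-clique determinant above contributing $d_i$ roots near $(p_i,p_i+2)$, and pin the count from above by a Courant--Weyl decomposition $L=A+B$ generalizing Lemma \ref{weyl-two-leg}: taking $A$ block-diagonal in the cliques and chains, the block $K_{p_i}$ gives the eigenvalue $p_i$ with multiplicity $p_i-1$, and the gluing perturbation $B$ contributes exactly $d_i$ positive eigenvalues equal to $2$ acting on that block, so that at most $d_i$ eigenvalues of $L$ can exceed $p_i$ while staying below $p_i+2$. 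Matching this upper bound with the lower bound $d_i$ from the decoupled limit fixes the number of edge eigenvalues of clique $i$ at exactly $d_i$.

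The hard part, and the reason the statement is only conjectured, is the rigor of this final count. Two effects must be controlled simultaneously. When several $p_i$ coincide the target intervals $(p_i,p_i+2)$ overlap, so the localization estimate must be sharpened to attribute each root to a definite clique rather than merely to the union of intervals. Moreover the $(d_i-1)$-fold degeneracy at $\sigma_+=-\lambda+p_i+1$ is generically split by the coupling, so one must show that the resulting $d_i-1$ perturbed roots neither collide, nor leave $(p_i,p_i+2)$, nor merge with roots belonging to a neighbouring clique. The natural device is a homotopy sending all chain couplings to zero, or all chain lengths to a common large value, tracking the eigenvalues continuously from the decoupled limit while using the Weyl bound as a barrier that prevents roots from crossing the endpoints $p_i$ and $p_i+2$. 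Making this tracking rigorous and uniform over the graph topology is exactly the step that resists a short argument.
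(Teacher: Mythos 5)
You should first be aware that the paper contains no proof of this statement to compare against: it is introduced with the words ``We also give the following conjecture'' and is supported only by the numerical example $K_{10}\oplus C^1\oplus C^2\oplus C^3$. So your proposal is not competing with an argument in the paper; it is an attempt to settle an open claim.

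Judged on its own merits, your single-clique computation is correct and is genuine progress. Orthogonality to the clique eigenvectors does force $v$ to be constant on the $p-d$ chainless vertices; the junction equations $\alpha w_k=(p-d)C_0+\sum_l w_l$ with $\alpha=-\lambda+p+1-\sigma_+$ are right; and the dichotomy $\alpha\neq 0$ versus $\alpha=0$ cleanly produces one ``symmetric'' root, solving $(-\lambda+p-d+1-\sigma_+)(-\lambda+d)=(p-d)d$, together with a $(d-1)$-fold degenerate ``antisymmetric'' root solving \eqref{antisym-eval-eq-1}. This reduces to \eqref{sym-eigenval-two-leg-inf-1} and \eqref{antisym-eval-eq-1} for $d=2$, and it explains exactly the structure the paper only observes numerically (one symmetric eigenvector and an antisymmetric eigenvalue of multiplicity $N-1$). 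For a single clique with $d$ pendant semi-infinite chains at distinct vertices, your argument, completed with sign and monotonicity estimates analogous to (but not literally ``verbatim'' from) those for $F_S$, $F_A$ in Proposition \ref{edge-states-infinite2}, is essentially a full proof of a new case.

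The gap is in the step that the conjecture actually requires: cliques joined to one another by \emph{finite} chains. Two specific points fail as written. First, your Courant--Weyl count is applied locally (``$B$ contributes exactly $d_i$ positive eigenvalues acting on that block''), but the Weyl inequalities of Proposition \ref{courant_weyl} are global: the block-diagonal $A$ carries the eigenvalue $p_j$ with multiplicity $p_j-1$ for \emph{every} clique, and $B$ has one eigenvalue $2$ per gluing edge in the whole graph, so the interlacing bookkeeping mixes the cliques. The resulting bound is on the number of eigenvalues of $L$ above a given level, not on the number attributable to clique $i$; when some $p_j$ lies in $[p_i,p_i+2]$ (in particular when several $p_i$ coincide) the interval-by-interval count you need cannot be extracted this way, and since the Weyl bound gives the closed interval $[p_i,p_i+2]$ it also does not act as a ``barrier'' preventing a perturbed root from crossing an endpoint. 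Second, the factorization $D(\lambda)=\prod_i D_i(\lambda)+(\text{exponentially small})$ and the subsequent root-tracking are asserted, not proved: the coupling terms involve $\sigma_+^{q_e}$ through quantities like $Q_q$ in \eqref{constants-Q-G}, but one needs bounds uniform in $\lambda$ over the interval, a perturbation argument valid at the $(d_i-1)$-fold degenerate antisymmetric root (where simple-root implicit-function arguments do not apply), and a localization statement for the eigenvectors to attribute each root to a definite clique. You identify these obstacles yourself in your final paragraph; they are precisely why the statement remains a conjecture, and your proposal, while a sound program and a correct proof in the decoupled (pendant semi-infinite) case, does not close it.
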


As a numerical example, we show a graph ${\cal G}$ composed of $K_{10}$
and three chains.

\begin{figure} [H]
\centerline{ \epsfig{file=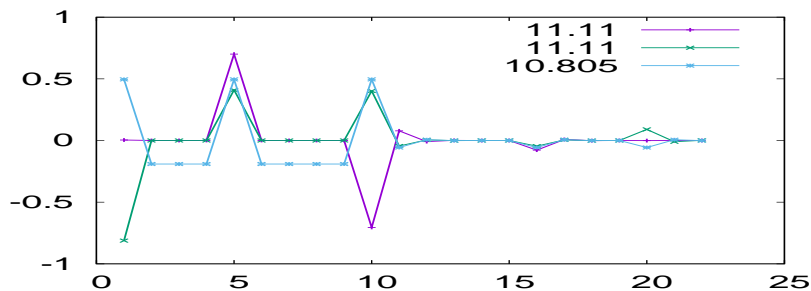,height=4 cm,width=12 cm,angle=0}}
\caption{Plot of the three edge eigenvectors for the graph 
$G=K_{10} \oplus C^1 \oplus C^2 \oplus C^3$. The clique is 
$K_{10}= \{ 1,2,\dots ,10\}$ and the chains are
$C^1 = \{20,21,22 \}$, $C^2 = \{16,17,18,19 \}$ and 
$C^3 = \{ 11,12 ,13,14,15\}$ connected at
vertices $1$, $5$ and $10$ respectively. 
	}
\label{rr5}
\end{figure}
There are $N$ edge eigenvectors of eigenvalue $\lambda > p$. One of them
is symmetric, and $N-1$ are antisymmetric. For the antisymmetric edge 
eigenvector, the eigenvalue is the one calculated for 
the graph with a clique and
two chains. For example for the $G=K_{10} \oplus C^1 \oplus C^2 \oplus C^3$ we have 
$\lambda=11.11$ for the antisymmetric eigenvector.  
Both symmetric and antisymmetric eigenvectors can be labeled
using a three component vector with $\pm 1$, each component
corresponding to a chain connected to $=K_{10}$.
Using this shorthand notation, the symmetric and antisymmetric
eigenvectors are 
$$ s = \begin{pmatrix} 1 \cr 1 \cr 1 \end{pmatrix}, ~~
a^1 = \begin{pmatrix} 0 \cr 1 \cr -1 \end{pmatrix}, ~~
a^2 = \begin{pmatrix} 1 \cr 0 \cr -1 \end{pmatrix},$$
where the antisymmetric eigenvectors correspond to the 
eigenvalue of multiplicity $N-1=2$. These 
eigenvectors are shown in Fig.  \ref{rr5}.

{\bf Acknowledgments.}
We acknowledge the support of a french-mexican Ecos-Conacyt grant. 
JGC thanks IIMAS for financial support and was partially funded 
by ANR grant "Fractal grid". PP thanks Papit IN112119 for partial support.

\section{Appendix: proof of existence of edge eigenvector for $ C_{q_1} \oplus K_p \oplus C_{q_2} $ }

\begin{proof}
To construct $v $ that satisfies
  $L v = \lambda v$, $\lambda > 4$ and is 
orthogonal to the span of the $p-3$ eigenvectors of
Proposition \ref{clique-evect-twoleg-inf} we  
argue as in the proof of 
Proposition \ref{clique-evect-twoleg-inf}.
Considering $L v = \lambda v$ at the sites 
$j = -p + 1, \ldots, 0$, and letting 
$C_{-2} = v_{-p}$, $C_{-1} =  v_{-p+1} $, $C_0 = v_{-p+2} = \ldots  = v_{-1}$,  
$ C_{1} = v_0$, and $ C_2 = v_1$ we obtain 
the equations 
\begin{equation}
   \label{eq1-two-leg-finite}
   C_1 + C_{-1} = (-\lambda + 2)C_0,  
\end{equation}
 \begin{equation}
   \label{eq2-two-leg-finite}
C_{-1} + (p-2)C_0 + C_2 = (-\lambda + p)C_1,     
\end{equation} 
\begin{equation}
\label{eq3-two-leg-finite}
C_{1} + (p-2)C_0 + C_{-2} = (-\lambda + p)C_{-1},
\end{equation}
i.e. as in
\eqref{first-cond-two-leg-inf},
\eqref{second-cond-two-leg-inf},
\eqref{third-cond-two-leg-inf}.

We will express $C_2$, $C_{-2}$ in terms of the $C_1$, $C_{-1}$ respectively 
by analyzing the $L v = \lambda v$ at the remaining sites. 

Consider first 
$L v = \lambda v$ at the nodes $ 1 \leq j \leq q_2 - 2 $, 
\begin{equation}  
\label{two-legs-finite-right}
- v_{j-1} +  2 v_{j} - v_{j+1} = \lambda v_{j},
\quad \forall j \in \{1, \ldots, q_2 -2 \}.
\end{equation}
We argue as in the proof of Proposition \ref{edge-states-finite}. 
Letting $ z_j = (v_{j},v_{j+1})^T$, $ 0 \leq j  \leq q_2 - 1$,  
\eqref{two-legs-finite-right} is equivalent to
\begin{equation}
\label{two-legs-transf-finite-right}
z_{j+1} = M_{\lambda} z_j, \quad \forall j \in \{1, \ldots, q_2-2 \},   
\end{equation}
with $M_\lambda$ as in \eqref{def-transf-matrix}.  
Then $z_0 = a_2 v^+ + b_2 v^- $, $a_2$, $b_2$ real, implies
\begin{equation}
\label{sol-transf-eq-one-leg-finite}
z_{n} = a_2 \sigma_+^n v^+ + b_2 \sigma_-^n v^-, \quad \forall n \in \{ 0, \ldots, q_2 -2\}.   
\end{equation}
Evaluating at $n = q_2 - 2$ and comparing to 
$L v = \lambda v$ at the node $ n = q_2 - 1$, namely
\begin{equation}
v_{q_2 -1} = \frac{ v_{q_2-2}}{-\lambda + 1},  
\end{equation} 
we arrive at 
\begin{equation}
\label{two-leg-last-value}
\frac{1}{-\lambda + 1} [ a_2 \sigma_+^{q_2-2}  + b_2 \sigma_-^{q_2-2}] =  
a_2 \sigma_+^{q_2-1} + b_2 \sigma_-^{q_2-1}.
\end{equation} 
Arguing as in as in the proof of Proposition \ref{edge-states-finite} we have 
\begin{equation}
\label{right-leg-ratio-a2b2}
\frac{b_2}{a_2} =  \sigma_+^{2 q_2},  
\end{equation} 
and $a_2 b_2 \neq 0$.
Comparing expressions  
for $v_0$, $v_1$, and $z_0 = a_2 v^+ + b_2 v^- $, using also \eqref{right-leg-ratio-a2b2} 
for the ratio $b_2/a_2$, we must require  
 \begin{equation}
\label{two-legs-finite-matching-right}
\left[  
\begin{array}{c}
C_1 \\ 
C_2 
\end{array}
\right] = 
a_2
\left( \left[
\begin{array}{c}
1   \\ 
\sigma_+ 
\end{array}
\right] + 
 \sigma_+^{2q_2-1} \left[
\begin{array}{c}
1  \\ 
\sigma_- 
\end{array}
\right]
\right).
\end{equation} 
Then  
\begin{equation}
\label{c1-c2-expressions}
C_1 = a_2 (1 + \sigma_+^{2 q_2 - 1}), \quad C_2 = a_2 (\sigma_+ + \sigma_+^{2 q_2 - 2}),
\end{equation}
and therefore  
\begin{equation}
\label{eliminate-c2}
C_2 = C_1  \sigma_+ \frac{1 + \sigma_+^{2 q_2 - 3}}{ 1 + \sigma_+^{2 q_2 - 1}}.
\end{equation}

Consider now  
$L v = \lambda v$ at the nodes $  - p -  q_1 + 3 \leq j \leq - p  $, 
\begin{equation}  
\label{two-legs-finite-right}
- v_{j-1} + 2 v_{j} - v_{j+1} = \lambda v_{j},
\quad \forall j \in \{-p - q_1 + 3, \ldots, - p \}.
\end{equation}
Letting $ z_j = [v_{j},v_{j+1}]^T$, $ - p - q_1 + 2  \leq j  \leq - p - 1  $,  
\eqref{two-legs-finite-right} is equivalent to
\begin{equation}
\label{two-legs-transf-finite-right}
z_{j+1} = M_{\lambda} z_j, \quad \forall j \in \{ -p - q_1 +2, \ldots, -p -1 \}.   
\end{equation}
Then $ z_{-p - q_1 +2 + n} = M_{\lambda}^n z_{-p - q_1 +2}$, 
$n \in \{0, \ldots, \tilde q_1-2 \}$ and for $n = q_1 - 2$ we have 
\begin{equation}
\label{connect-to-boundary}
z_{-p} = M_{\lambda}^{q_1 - 2} z_{-p - q_1 +2}. 
\end{equation}
Setting $z_{-p} = a_1 v^+ + b_1 v^-$, we then have 
\begin{equation}
\label{iterate-to-boundary}
z_{-p - q_1 +2} = (M^{-1}_{\lambda})^{q_1 - 2}z_{-p} = 
a_1 \sigma_-^{q_1-2} v^+ + b_1 \sigma_+^{q_1 -2}v^-.
\end{equation}
Therefore 
\begin{equation}
\label{two-legs-right-value}
\left[ 
\begin{array}{c}
v_{-p - q_1 + 2} \\ 
v_{-p - q_1 + 3} 
\end{array}
\right] = 
\left[
\begin{array}{c}
a_1 \sigma_-^{q_1-2}  + b_1 \sigma_+^{q_1-2}  \\ 
 a_1 \sigma_-^{q_1 - 3} + b_1 \sigma_-^{q_1  - 3} 
\end{array}
\right]. 
\end{equation} 
At the same time, $L v = \lambda v$ at the node $-p - q_1 +2$ yields 
\begin{equation}
\label{two-leg-first-site}
v_{-p - q_1 + 2} = \frac{ v_{- p - q_1 + 3}}{-\lambda + 1}. 
\end{equation} 
Comparing \eqref{two-legs-right-value}, \eqref{two-leg-first-site}
we must have 
\begin{equation}
\label{first-value-compatibility}
\frac{1}{-\lambda + 1} [ a_1 \sigma_+^{q_1-3}  + b_1 \sigma_-^{q_1-3}] =  
a_1 \sigma_+^{q_1 - 2} + b_1 \sigma_-^{q_1 - 2}.
\end{equation} 
Note that this is \eqref{last-value-one-compatibility} 
in the proof of Proposition \ref{edge-states-finite}
with $q = q_1 - 1$, 
$a = a_1$, $b = b_1$, and $\sigma\pm = \sigma_\mp$. 
Arguing similarly 
we have 
\begin{equation}
\label{left-leg-ratio-a1b1}
\frac{a_1}{b_2} = \sigma_+^{2 q_1-3}. 
\end{equation} 
Comparing expressions  
for $v_{-p}= C_{-2}$, $v_{-p+1} = C_{-1}$, and $z_{-p} = a_1 v^+ + b_2 v^- $, using also \eqref{left-leg-ratio-a1b1} 
for the ratio $a_1/b_2$ we must require  
 \begin{equation}
\label{two-legs-finite-matching-right}
\left[  
\begin{array}{c}
C_{-2} \\ 
C_{-1}
\end{array}
\right] = 
b_1 \left(
  \sigma_+^{2q_1-3}
\left[  
\begin{array}{c}
1   \\ 
\sigma_+ 
\end{array}
\right] + 
\left[
\begin{array}{c}
1  \\ 
\sigma_- 
\end{array}
\right]
\right).
\end{equation} 
Then  
\begin{equation}
\label{cm1-cm2-expressions} 
C_{-2} = b_1 (\sigma_+^{2 q_1 - 3} + 1), \quad C_{-1} =
b_1 (\sigma_+^{2 q_1 - 2} + \sigma_-),
\end{equation}
and 
\begin{equation}
\label{eliminate-c-minus2}
C_{-2} = C_{-1}  \sigma_+
\frac{1 + \sigma_+^{2 q_1 - 3}}{ 1 + \sigma_+^{2 q_1 -1}}.
\end{equation}

By \eqref{eliminate-c2}, 
\eqref{eliminate-c-minus2}, 
system 
\eqref{eq1-two-leg-finite}, 
\eqref{eq2-two-leg-finite}, 
\eqref{eq3-two-leg-finite}
is reduced to  
\begin{eqnarray}
   \label{hom-system-eq1}
   C_1 + C_{-1}  - (-\lambda + 2)C_0 & = & 0, \\
\label{hom-system-eq2}
  Q_{q_2}C_1 + C_{-1} + (p-2) C_0  & = & 0, \\
\label{hom-system-eq3}
  C_{1} +  Q_{q_1}C_{-1} + (p-2)C_0 & = & 0,
\end{eqnarray}
with $Q_{q}$ as in \eqref{constants-Q-G}.
This is a homogeneous system of the form 
$Mx = 0$, with $x = [C_{-1}, C_1, C_0]$, $M$ defined implicitly by
\eqref{hom-system-eq1}-\eqref{hom-system-eq3}.

We compute that $\hbox{det}M = D_{q_1,p,q_2}(\lambda)$, with  
$ D_{q_1,p,q_2}(\lambda)$ given by 
\eqref{two-leg-edge-eigenval-eq-finite}. 
We then have the first statement of the proposition, since 
the trivial solution
of \label{hom-system} would lead to a trivial solution of $L v = \lambda v$.

We now examine $ D_{q_1,p,q_2}(\lambda) = 0$ with $\lambda > 4$.
We first show that there are no solutions in $(4,p]$. 
By \eqref{two-leg-edge-eigenval-eq-finite} $ D_{q_1,p,q_2}(\lambda) = 0$ is 
equivalent to 
\begin{equation}
\label{general-fin-two-leg-eq}
(p-2)(2 - Q_{q_1} - Q_{q_2}) =  (\lambda-2)(1 - Q_{q_1} Q_{q_2}).
\end{equation}
By $\lambda > 4$, we have $\sigma_+ \in (-1,0)$, and 
$G_q = (1 + \sigma_+^{2 {\tilde q} - 1}) (1 + \sigma_+^{2 {\tilde q} + 1})^{-1} \in (0,1)$.
By definition \eqref{constants-Q-G} we then have 
$ Q_{q} \in (-1,0)$;
Assume $  \lambda \in (4,p]$, then the right hand side 
of \eqref{general-fin-two-leg-eq} satisfies
\begin{equation}
\label{rhs-gen-fin-two-leg}
(p-2)(2 - Q_{q_1} - Q_{q_2}) > 2(p-2)(p - \lambda +1) \geq 2(p-2).
\end{equation}
Considering the left hand side 
of \eqref{general-fin-two-leg-eq}, we have  
\begin{equation}
\label{left-gen-fin-two-leg}
1 - Q_{q_1}Q_{q_2} \leq 1
\end{equation}
since 
$$ Q_{q_1}Q_{q_2} = \sigma_+^2 G_{q_1} G_{q_2} +(p-\lambda)
( - \sigma_+ G_{q_1} - \sigma_+ G_{q_2} + p-\lambda) > 0.$$
If $ 1 - Q_{q_1}Q_{q_2} \leq  0$, then the 
left hand side 
of \eqref{general-fin-two-leg-eq} is nonpositive, 
and by \eqref{rhs-gen-fin-two-leg}, equality \eqref{general-fin-two-leg-eq}
can not be satisfied. 
If $ 1 - Q_{q_1}Q_{q_2} > 0$, then the assumption $\lambda \in (4,p]$, 
and \eqref{left-gen-fin-two-leg} imply
$$ (\lambda-2)(1 - Q_{q_1} Q_{q_2}) \leq p-2. $$ 
By $p > 2$ and \eqref{rhs-gen-fin-two-leg} we see that again
\eqref{general-fin-two-leg-eq}
can not be satisfied. 

Combining with 
assumption $p \geq 6$ and Proposition \ref{weyl-two-leg}, 
all solutions of $D_{q_1,p,q_2}(\lambda) = 0$ with $\lambda > 4$
must belong to the interval $(p,p+2]$. 
moreover $ D_{q_1,p,q_2}(\lambda) = 0$ has exactly two solutions in $(p,p+2]$.

We now consider the case $q_1  = q_2 = q$ we have the factorization  
$ D_{q_1,p,q_2}(\lambda) =  - F_{A,q}(\lambda) F_{S,q}(\lambda)$.


We first check that $F_{S,q}(\lambda) = 0$, $F_{A,q}(\lambda) = 0$ correspond 
to symmetric and antisymmetric modes respectively, we add and subtract 
\eqref{hom-system-eq2}, \eqref{hom-system-eq3}
obtaining 
\begin{equation}
\label{sum-hom-system}
(C_1 + C_{-1})[Q_{q} + 1 + 2(p-2)(-\lambda + 2)^{-1}] = 0, 
\end{equation}
\begin{equation}
\label{dif-hom-system}
(C_1 - C_{-1})[Q_{q} - 1] = 0.
\end{equation}

Consider the eigenvalue satisfying  
$ F_{A,q}(\lambda) = Q_{q} - 1 = 0 $.  
Suppose that $C_1 + C_{-1} \neq 0$. Then to satisfy \eqref{sum-hom-system},
we must have 
$ Q_{q} + 1 + 2(p-2)(-\lambda + 2)^{-1} = 0 $, or equivalently to $\lambda = p$. 
Then by the definition of 
$Q_{q}$ in \eqref{constants-Q-G}, $ Q_{q} - 1 = 0 $ is  
$$ \sigma_+ =
\frac{1 + \sigma_+^{2 q - 1}}{1 + \sigma_+^{2 q - 3}}. $$
By $\lambda > 4$ we have $\sigma_+ \in (-1,0)$. On the other hand 
$$ \frac{1 + \sigma_+^{2 {\tilde q} + 1}}{1 + \sigma_+^{2 {\tilde q} - 1}} = 
\frac{1 - |\sigma_+|^{2 {\tilde q} + 1}}{1 - |\sigma_+|^{2 {\tilde q} - 1}} > 1,$$ 
thus $ Q_{q} - 1 \neq 0 $. We therefore have $C_1 + C_{-1} = 0$, and 
$C_0 = 0$ by \eqref{hom-system-eq1}. 
By \eqref{c1-c2-expressions}, \eqref{cm1-cm2-expressions} we also have $C_{-2} = -C_2$, 
and by $L v = \lambda v$ at sites $ j \geq 1$, $j \leq - p$ of the graph
we obtain $v_{1 + n} = - v_{-p -n}$, $\forall n \in \{1, \ldots, q-2\}$.
Thus the corresponding eigenvector is antisymmetric. 

Consider the eigenvalue satisfying 
$ F_{A,q}(\lambda) = Q_{q} + 1 + 2(p-2)(-\lambda + 2)^{-1}= 0 $. By the previous argument 
this is can not hold if $Q_{q} - 1 = 0 $. Thus $C_1 - C_{-1} = 0$. 
By \eqref{c1-c2-expressions}, \eqref{cm1-cm2-expressions}
we also have $C_{-2} =  C_2$,  
and by $L v = \lambda v$ at sites $ j \geq 1$, $j \leq - p$ of the graph
we obtain $v_{1 + n} = v_{-p -n}$, $\forall n \in \{1, \ldots, {\tilde q}-1\}$.
Thus the corresponding eigenvector is symmetric. 


To see that we have exactly one symmetric and one antisymmetric
eigenvector, 
we observe that by \eqref{two-leg-edge-finite-sym},
$\sigma_+ \in (-1,0)$,
$$ F_{A,q}(p) =
\sigma_+ \frac{1 + \sigma_+^{2q-3}}{ 1 + \sigma_+^{2q-1}} - 1
< 0, $$ 
and
$$ F_{A,q}(p+2) =
\sigma_+ \frac{1 + \sigma_+^{2q-3}}{ 1 + \sigma_+^{2q-1}} + 1 > 0, $$
assuming $ q \geq 2$.
$F_{A,q}(\lambda)$ therefore has at least one root in $(p,p+2)$.
Also, 
$$ F_{S,q}(p) = (p -2) \left[ \sigma_+
  \frac{1 -  \sigma_+^{2q-3}}{ 1 + \sigma_+^{2q-1}} + 1 \right] > 0, $$
and
$$ F_{q,S}(p+2) ) = - p \left[ \sigma_+
  \frac{1 -  \sigma_+^{2q-3}}{ 1 + \sigma_+^{2q-1}} + 1 \right] < 0, $$
assuming $q \geq 2$.
$F_{S,q}(\lambda)$ therefore has at least one root in $(p,p+2)$.
By the count of the roots of $D_{q_1,p,q_2}(\lambda)$ for $\lambda > 4$ above,  
there exist unique $\lambda_A$, $\lambda_S \in (p,p+2)$
satisfying $F_{A,q}(\lambda_A) = 0$, $F_{S,q}(\lambda_S) = 0 $ respectively,
moreover
these are the only roots of $F_{A,q}(\lambda_A)$, $F_{S,q}(\lambda_S)$
with $\lambda > 4$.

\end{proof}

\end{document}